\newcommand\mbb{\mathbb}
\newcommand\mcal{\mathcal}
\newcommand\ol{\overline}
\newcommand\sV{\mcal{V}}
\newcommand\C{\mbb{C}}
\newcommand\N{\mbb{N}}
\renewcommand\P{\mbb{P}}
\newcommand\Q{\mbb{Q}}
\newcommand\R{\mbb{R}}
\DeclareMathOperator*\rank{rank}
\DeclareMathOperator*\trace{tr}
\newcommand\isom{\cong}
\renewcommand\epsilon{\varepsilon}
\renewcommand\ge{\geqslant}
\renewcommand\le{\leqslant}
\renewcommand\phi{\varphi}
\renewcommand\theta{\vartheta}
\theoremstyle{plain}
\newtheorem{Thm}{Theorem}
\newtheorem{Prop}[Thm]{Proposition}
\newtheorem{Cor}[Thm]{Corollary}
\newtheorem{Lemma}[Thm]{Lemma}
\newtheorem*{Thm*}{Theorem}
\newtheorem*{Prop*}{Proposition}
\newtheorem*{Cor*}{Corollary}
\newtheorem*{Lemma*}{Lemma}
\newtheorem*{Conjecture*}{Conjecture}
\theoremstyle{definition}
\newtheorem{Def}[Thm]{Definition}
\newtheorem{Example}[Thm]{Example}
\newtheorem{Remark}[Thm]{Remark}
\newtheorem*{Constr*}{Construction}
\newtheorem*{Def*}{Definition}
\newtheorem*{Defs*}{Definitions}
\newtheorem*{Example*}{Example}
\newtheorem*{Examples*}{Examples}
\newtheorem*{Exercise*}{Exercise}
\newtheorem*{LemmaDef*}{Lemma and Definition}
\newtheorem*{Notation*}{Notation}
\newtheorem*{Problem*}{Problem}
\newtheorem*{Question*}{Question}
\newtheorem*{Remark*}{Remark}
\newtheorem*{Remarks*}{Remarks}
\newtheorem*{Warning*}{Warning}
\numberwithin{equation}{section}
\numberwithin{Thm}{section}
\newcommand\varx{x}
\begin{document}
\title[Hyperbolic polynomials, interlacers, and sums of squares]{Hyperbolic polynomials, interlacers, and sums of squares}

\subjclass[2010]{Primary: 14P99, Secondary: 05E99, 11E25, 52A20, 90C22}

 \author{Mario Kummer}
\address{Universit\"at Konstanz, Germany} 
\email{Mario.Kummer@uni-konstanz.de}

\author{Daniel Plaumann}
\address{Universit\"at Konstanz, Germany} 
\email{Daniel.Plaumann@uni-konstanz.de}

 \author{Cynthia Vinzant}
\address{University of Michigan, Ann Arbor, MI, USA}
\email{vinzant@umich.edu}

\maketitle

\begin{abstract}
  Hyperbolic polynomials are real polynomials whose real hypersurfaces
  are maximally nested ovaloids, the innermost of which is convex. These
  polynomials appear in many areas of mathematics, including
  optimization, combinatorics and differential equations.  Here we
  investigate the special connection between a hyperbolic polynomial
  and the set of polynomials that interlace it.  This set of
  interlacers is a convex cone, which we write as a linear slice of the
  cone of nonnegative polynomials. In particular, this allows us to realize
  any hyperbolicity cone as a slice of the cone of nonnegative
  polynomials. Using a sums of squares relaxation, we then approximate
  a hyperbolicity cone by the projection of a spectrahedron.  A
  multiaffine example coming from the V\'amos matroid shows that this
  relaxation is not always exact. 
  Using this theory, we characterize the 
  real stable multiaffine polynomials that have a definite determinantal representation
  and construct one when it exists. 
\end{abstract}

\section{Introduction}

  A homogeneous polynomial $f\in \R[\varx]$ of degree $d$ in variables
  $x=(x_1,\dots,x_n)$ is called \textbf{hyperbolic} with respect to a point
  $e\in \R^{n}$ if $f(e)\neq 0$ and for every $a\in \R^{n}$, all
  roots of the univariate polynomial $f(te+a)\in \R[t]$ are real.  Its
  \textbf{hyperbolicity cone}, denoted $C(f,e)$ is the connected
  component of $e$ in $\R^{n} \backslash \mathcal{V}_{\R}(f)$ and
  can also be defined as
\[ C(f,e) = \{ a\in \R^{n} \;\colon\; f(te-a) \neq 0 \;\text{ when }\; t\leq 0\}.\]

As shown in G\r{a}rding \cite{Gar}, $C(f,e)$ is an open convex cone
and $f$ is hyperbolic with respect to any point contained in it.
Hyperbolicity is reflected in the topology of the real projective variety
$\sV_\R(f)$ in $\P^{n-1}(\R)$.  If $\sV_{\R}(f)$ is smooth, then $f$ is hyperbolic if and only if
$\sV_\R(f)$ consists of $\lfloor\frac{d}{2}\rfloor$ nested ovaloids, and
a pseudo-hyperplane if $d$ is odd (see \cite[Thm.~5.2]{HV}).

\begin{figure}[h]
 \includegraphics[width=3cm]{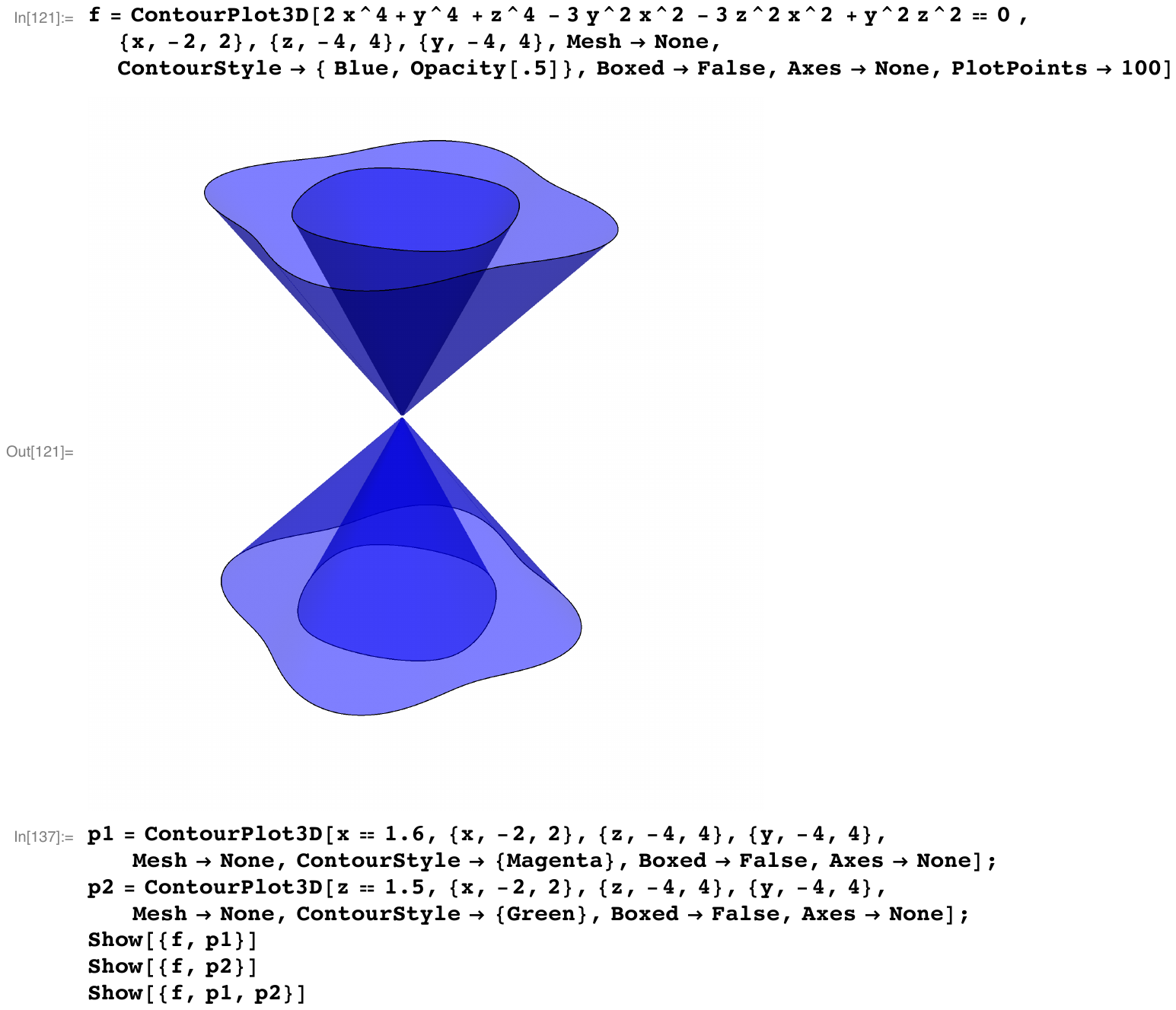} \quad
 \includegraphics[width=3.5cm]{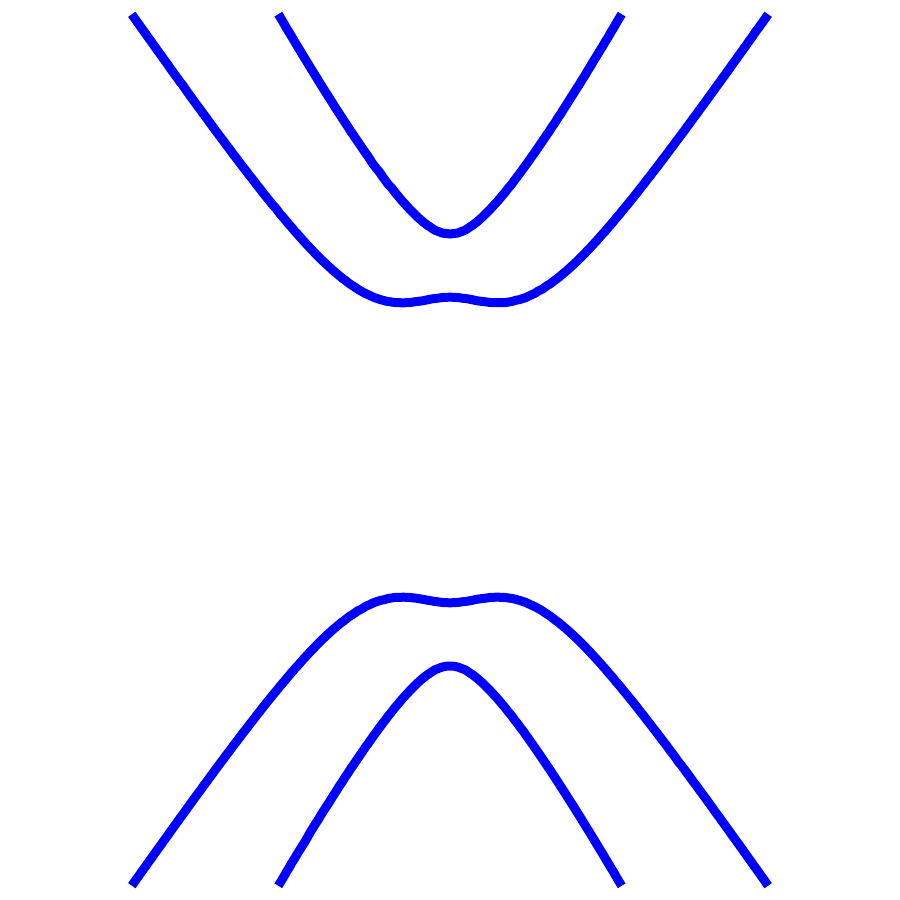}\quad
  \includegraphics[width=3.5cm]{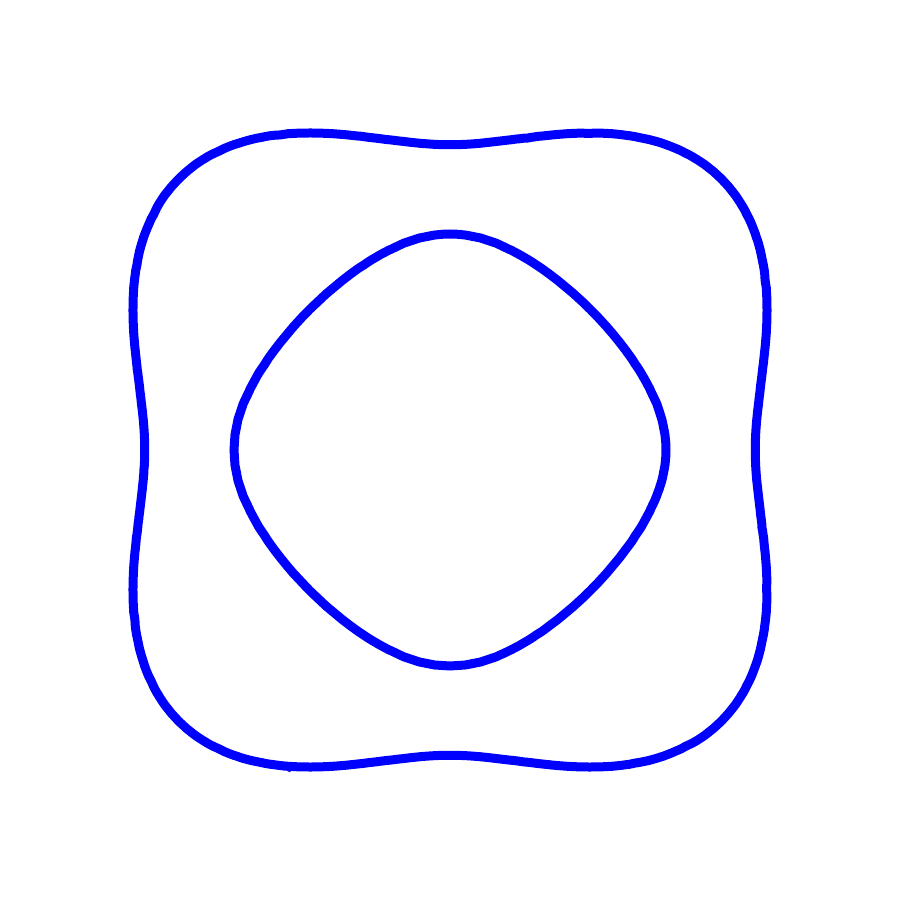}
\caption{A quartic hyperbolic hypersurface and two of its affine slices.  }
\label{fig:cubic}
\end{figure}
 
A hyperbolic program, introduced and developed by G\"uler
\cite{Gue}, Renegar \cite{Ren} and others, is the problem of maximizing a linear function over an 
affine section of the convex cone $C(f,e)$.  This provides a very general 
context in which interior point methods are effective.  For example, taking
$f = \prod_i x_i$ and $e=(1,\hdots, 1)$, we see that $C(f,e)$ is the positive
orthant $(\R_+\!)^{n}$ and the corresponding hyperbolic program 
is a linear program.  If instead we take $f$ as the determinant 
of a symmetric matrix of variables $X=(x_{ij})$ and $e$ is the identity 
matrix, then $C(f,e)$ is the cone of positive definite matrices.
\begin{center}
\begin{tabular}{c|c|c|c}
$f$ & $e$ & $C(f,e)$ & hyperbolic program\raisebox{0pt}[15pt][10pt]{} \\
\hline
$\prod_i x_i$ & $\;\;(1,\hdots, 1)\;\;$  & $(\R_+)^{n}$ & linear program \raisebox{0pt}[15pt][10pt]{}\\

$\;\det(X)\;$& $I$ & $\;$positive definite matrices$\;$ & $\;$semidefinite program$\;$\raisebox{0pt}[15pt][10pt]{}
\end{tabular}
\end{center}\smallskip

It is a fundamental open question whether or not every hyperbolic
program can be rewritten as a semidefinite program.  Helton and
Vinnikov \cite{HV} showed that if $f\in \R[x_1,x_2,x_3]$ is hyperbolic
with respect to a point $e$, then $f$ has a \textit{definite
  determinantal representation} $f = \det(\sum_ix_iM_i)$ where $M_1,
M_2, M_3$ are real symmetric matrices and the matrix $\sum_i e_i M_i$ 
is positive definite. Thus every three dimensional hyperbolicity cone is a slice of the
cone of positive semidefinite matrices. For a survey of these results
and future perspectives, see also \cite{Vin}. On the other hand,
Br\"and\'en \cite{Bra11} has given an example of a hyperbolic
polynomial $f$ (see Example~\ref{ex:Vamos}) such that no power of $f$
has a definite determinantal representation. There is a close
connection between definite determinantal representations of a
hyperbolic polynomial $f$ and polynomials of degree one-less that
\textit{interlace} it, which has also been used in \cite{PV} to study
Hermitian determinantal representations of hyperbolic curves.
 
\begin{Def}
  Let $f,g\in\R[t]$ be univariate polynomials with only real zeros and with $\deg(g)=\deg(f)-1$. Let
  $\alpha_1\le\cdots\le\alpha_d$ be the roots of $f$, and let
  $\beta_1\le\cdots\le\beta_{d-1}$ be the roots of $g$. We say that
  \textbf{$g$ interlaces $f$} if $\alpha_i\le\beta_i\le\alpha_{i+1}$ for all
  $i=1,\dots,d-1$. If all these inequalities are strict, we say that
  \textbf{$g$ strictly interlaces $f$}.

  If $f\in \R[\varx]$ is hyperbolic with respect to $e$ and $g$ is homogeneous of
  degree $\deg(f)-1$, we say that \textbf{$g$ interlaces $f$ with
    respect to $e$} if $g(te+a)$ interlaces $f(te+a)$ for every
  $a\in\R^n$. This implies that $g$ is also hyperbolic with
  respect to $e$. We say that $g$ \textbf{strictly interlaces $f$} if
  $g(te+a)$ strictly interlaces $f(te+a)$ for $a$ in a nonempty Zariski-open
  subset of $\R^n$.
\end{Def}

The most natural example of an interlacing polynomial is the derivative. If $f(t)$ is a real polynomial 
with only real roots, then its derivative $f'(t)$ has only real roots, which interlace the roots of $f$. 
Extending this to multivariate polynomials, we see that the roots of $\frac{\partial}{\partial t}f(te+a)$ interlace
those of $f(te+a)$ for all $a\in \R^{n}$. Thus 
\[ D_ef \;\; = \;\; \sum_{i=1}^n e_i \frac{\partial f}{\partial x_i}\]
interlaces $f$ with respect to $e$. If $f$ is square-free, then $D_ef$
strictly interlaces $f$. This was already noted by G\r{a}rding
\cite{Gar} and has been used extensively, for example in \cite{Bra07}
and \cite{Ren}; for general information on interlacing polynomials,
see also \cite{Fis} and \cite[Ch. 6]{MR1954841}.

 \begin{figure}[h]
 \includegraphics[width=4cm]{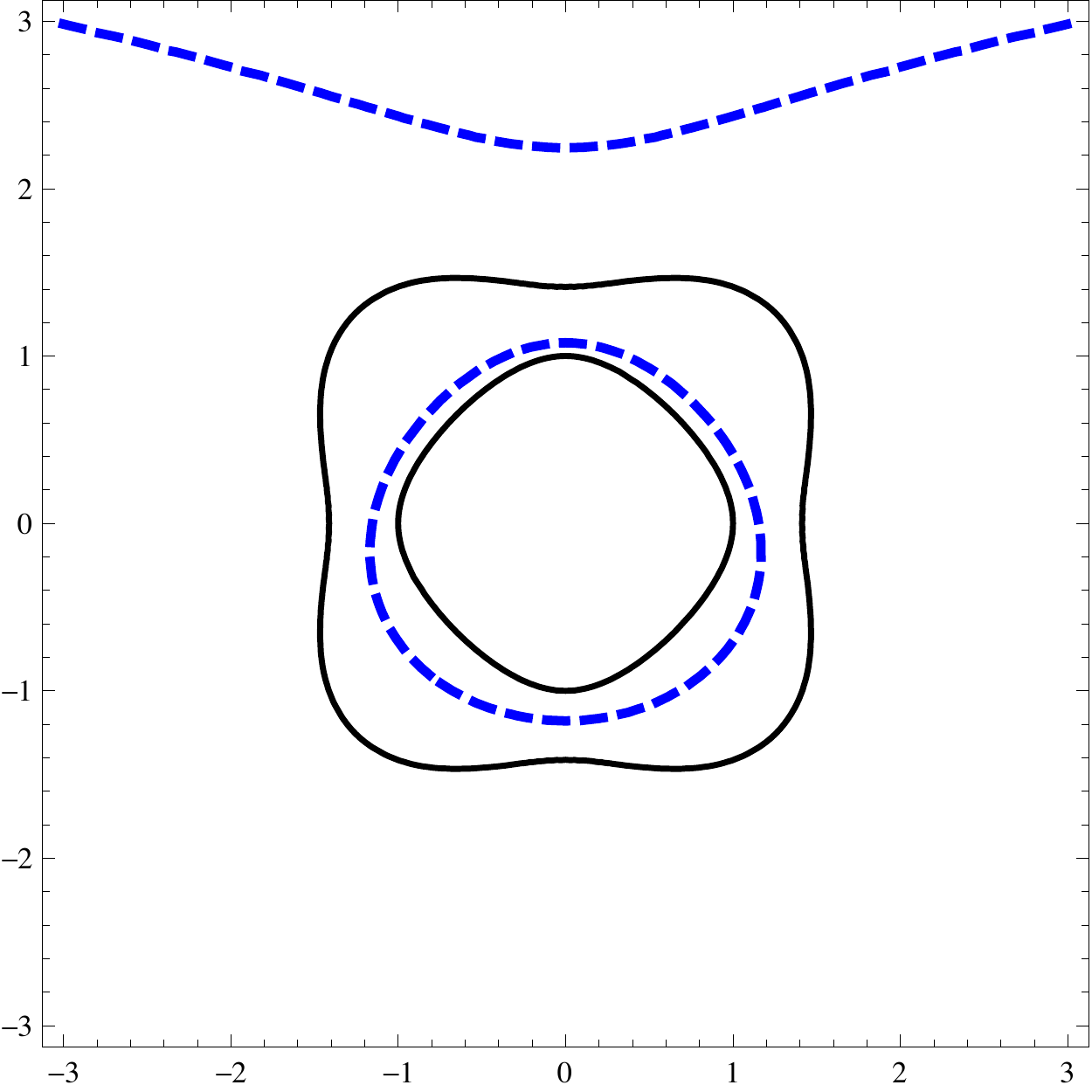} \quad
 \includegraphics[width=4cm]{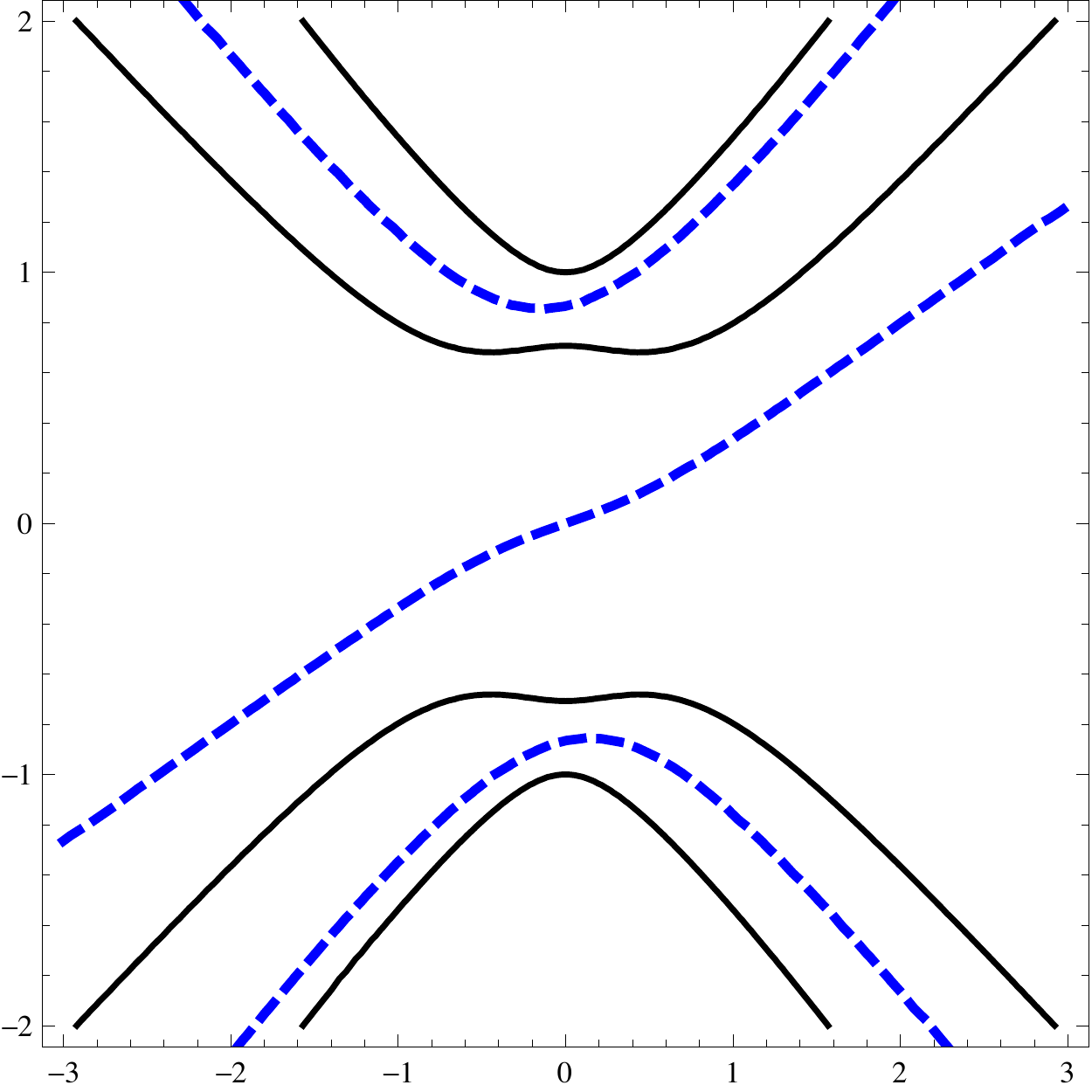} 
 \vspace{-.2cm}
\caption{Two affine slices of a cubic interlacing a quartic.  }
\label{fig:cubic}
\end{figure}

\begin{Remark}
  If $f$ is square-free and $d=\deg(f)$, then $f$ is hyperbolic with respect to $e$ if and
  only if $f(te+a)$ has $d$ distinct real roots for $a$ in a Zariski-open
  subset of $\R^{n}$.  In this case, if $g$ interlaces $f$ and has no common factors with $f$, 
  then $g$ strictly interlaces $f$. 
\end{Remark}

In this paper, we examine the set of polynomials in $\R[x]_{d-1}$
interlacing a fixed hyperbolic polynomial. The main result is a
description of a hyperbolicity cone $C(f,e)$ as a linear slice of the
cone of nonnegative polynomials. Using the cone of sums of
squares instead gives an inner approximation of $C(f,e)$ by a projection of a
spectrahedron.  This is closely related to recent results due to
Netzer and Sanyal \cite{NS} and Parrilo and Saunderson \cite{PS}. We
discuss both this theorem and the resulting approximation in
Section~\ref{sec:hypCone}. In Section~\ref{sec:HermDets} we see that
the relaxation we obtain is exact if some power of $f$ has a definite
determinantal representation.  A multiaffine example for which our
relaxation is not exact is discussed in Section~\ref{sec:multiaff}.
Here we also provide a criterion to test whether or not a hyperbolic multiaffine 
polynomial has a definite determinantal representation.
The full cone of interlacers has a nice structure, which we discuss in
Section~\ref{sec:boundary}. First we need to build up some basic facts
about interlacing polynomials.\\

\textbf{Acknowledgements.} We would like to thank Alexander Barvinok,
Petter Br\"and\'en, Tim Netzer, Rainer Sinn, and Victor Vinnikov for
helpful discussions on the subject of this paper.  Daniel Plaumann was
partially supported by a Feodor Lynen return fellowship of the
Alexander von Humboldt-Foundation. Cynthia Vinzant was partially
supported by the National Science Foundation RTG grant DMS-0943832 
and award DMS-1204447.

\section{Interlacers}\label{sec:Interlacers}

Let $f$ be a homogeneous polynomial of degree $d$ that is hyperbolic
with respect to the point $e\in \R^{n}$.  We will always assume that
$f(e)>0$.  Define ${\rm Int}(f,e)$ to be the set of real polynomials of degree $d-1$ that
interlace $f$ with respect to $e$ and are positive at $e$:
\[
 {\rm Int}(f,e) \;=\;  \bigl\{ g\in \R[\varx]_{d-1}
\;\colon\;\text{$g$ interlaces $f$ with respect to }e\text{ and
}g(e)>0\bigr\}.
\]
As noted above, the hyperbolicity cone $C(f,e)$ depends only on $f$ and
the connected component of $\R^n\setminus\sV_\R(f)$ containing $e$. In
other words, we have $C(f,e)=C(f,a)$ for all $a\in C(f,e)$. We will see shortly that ${\rm Int}(f,e)$ does not
depend on $e$ either, but only on $C(f,e)$.

\begin{Thm} \label{thm:Interlacers} Let $f \in \R[\varx]_d$ be square-free and hyperbolic with respect to $e\in \R^{n}$, 
where $f(e)>0$. 
For $h \in \R[\varx]_{d-1}$, the following are equivalent:
\begin{enumerate}
\item $h \in  {\rm Int}(f,e)$;
\item $h \in  {\rm Int}(f,a)$ for all $a \in C(f,e)$;
\item $ D_ef\cdot h$ is nonnegative on $\mathcal{V}_{\R}(f)$;
\item $D_ef \cdot h- f \cdot D_eh $ is nonnegative on $\R^{n}$.
\end{enumerate}
\end{Thm}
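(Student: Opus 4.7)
The plan is to prove the cycle $(1)\Leftrightarrow(3)\Leftrightarrow(4)$ and separately $(1)\Leftrightarrow(2)$; the direction $(2)\Rightarrow(1)$ is immediate on taking $a=e$. The unifying thread is to reduce every condition to the univariate theory of interlacing by restricting to lines $t\mapsto te+a$ in direction $e$. Because $f$ is square-free and hyperbolic with respect to $e$, the polynomial $F(t):=f(te+a)$ has $d$ distinct real roots $\alpha_1<\cdots<\alpha_d$ for $a$ in a Zariski-dense open subset of $\R^n$. For $(1)\Leftrightarrow(3)$, the classical univariate criterion says that $H(t):=h(te+a)$ interlaces such an $F$ if and only if $F'(\alpha_i)H(\alpha_i)\ge 0$ for each $i$. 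Translating via $F'(\alpha_i)=D_ef(\alpha_ie+a)$ and $H(\alpha_i)=h(\alpha_ie+a)$, and observing that every smooth point $p\in\sV_\R(f)$ arises as $\alpha_ie+a$ (e.g.\ take $a=p$, $\alpha_i=0$) while smooth points are Zariski-dense, the conditions become equivalent; the normalization $h(e)>0$ matches the sign of the leading coefficient of $H$ at the rightmost root.

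For $(3)\Leftrightarrow(4)$, the implication $(4)\Rightarrow(3)$ is immediate since $f$ vanishes on $\sV_\R(f)$. Conversely, a partial-fraction computation along each generic line $t\mapsto te+a$ yields the identity
\[F'(t)H(t)-F(t)H'(t)\;=\;F(t)^{2}\sum_{i=1}^{d}\frac{H(\alpha_i)}{F'(\alpha_i)\,(t-\alpha_i)^{2}},\]
whose $i$th term is $[H(\alpha_i)/F'(\alpha_i)]\cdot(F(t)/(t-\alpha_i))^{2}$, a square polynomial weighted by the scalar $H(\alpha_i)/F'(\alpha_i)$. Under $(3)$ each of these weights is nonnegative, so the right-hand side is nonnegative for all $t$. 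Hence $(D_ef\cdot h-f\cdot D_eh)(te+a)\ge 0$ on a Zariski-dense subset of $\R^n$, and by continuity on all of $\R^n$, establishing $(4)$.

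Finally, for $(1)\Rightarrow(2)$, the equivalence $(1)\Leftrightarrow(3)$ applied with $a$ in place of $e$ reduces the task to showing that $D_af\cdot h\ge 0$ on $\sV_\R(f)$ for every $a\in C(f,e)$, together with $h(a)>0$. At a smooth point $p\in\sV_\R(f)$, the linear functional $v\mapsto D_vf(p)=v\cdot\nabla f(p)$ has kernel $T_p\sV(f)$, and the key geometric fact is $T_p\sV(f)\cap C(f,e)=\emptyset$, so the functional has constant sign on the connected open cone $C(f,e)$. Combined with $(3)$ for $e$, this gives $D_af(p)\cdot h(p)\ge 0$ for every $a\in C(f,e)$; density of smooth points then extends the inequality to all of $\sV_\R(f)$. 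The positivity $h(a)>0$ follows from the inclusion $C(f,e)\subseteq C(h,e)$, which is itself a direct consequence of interlacing of roots along lines in direction $e$ (the roots of $h(te-a)$ are sandwiched between the positive roots of $f(te-a)$). The main obstacle is the tangent-hyperplane claim $T_p\sV(f)\cap C(f,e)=\emptyset$: for boundary points of $\overline{C(f,e)}$ it follows from G\r{a}rding's convexity theorem, but for smooth points on inner ovaloids of $\sV_\R(f)$ it requires a separate argument using the hyperbolicity of $f$ in every direction of $C(f,e)$, e.g.\ by analyzing the two-variable polynomial $f(p+se+tv)$ near the origin.
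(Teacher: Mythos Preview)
Your overall architecture is sound, and the treatment of $(1)\Leftrightarrow(3)$ and $(3)\Leftrightarrow(4)$ is correct. Your partial-fraction identity
\[
F'H-FH'=\sum_{i=1}^d\frac{H(\alpha_i)}{F'(\alpha_i)}\Bigl(\frac{F(t)}{t-\alpha_i}\Bigr)^2
\]
is a nice self-contained replacement for the paper's appeal to the classical Wronskian sign fact, and gives $(3)\Rightarrow(4)$ immediately on generic lines and then everywhere by continuity. This is essentially the same idea as the paper but made explicit.

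Where you genuinely diverge from the paper is in $(1)\Rightarrow(2)$. The paper proves this via Lemma~\ref{lem:main}, which shows $D_ef\cdot D_af\ge 0$ on $\sV_\R(f)$ by a ``non-crossing arcs'' argument: as $b$ moves along the segment from $e$ to $a$ inside $C(f,e)$, the index $k$ of the root of $f(tb+x)$ sitting at $t=0$ cannot change, so the sign $(-1)^{d-k}$ of $D_bf(x)$ is the same at both endpoints. This works at \emph{every} real point of $\sV(f)$, smooth or not, and only uses continuity of roots. Your route instead aims for the stronger pointwise statement $D_af(p)\neq 0$ at every smooth $p\in\sV_\R(f)$ and every $a\in C(f,e)$ (equivalently, $T_p\sV(f)\cap C(f,e)=\emptyset$), from which the constant sign of $v\mapsto D_vf(p)$ on the connected cone $C(f,e)$ follows. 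This claim is true, but you only gesture at its proof; the convexity argument handles $p\in\partial\overline{C(f,e)}$, while for $p$ on the outer ovals one really does need the local two-variable analysis you allude to. Concretely: if $D_af(p)=0$ at a smooth $p$, pick $b$ with $D_bf(p)\neq 0$, apply the implicit function theorem to $G(s,t)=f(p+sb+ta)$ to write the zero set near $(0,0)$ as $s=\phi(t)$ with $\phi$ vanishing to order $m\ge 2$; then for one sign of small $s$ the equation $\phi(t)=s$ has fewer than $m$ real solutions near $0$, contradicting hyperbolicity with respect to $a$ (which forces all $m$ nearby roots of $G(s,\cdot)$ to be real). With this filled in, your argument is complete and in fact yields the sharper conclusion $D_ef(p)\,D_af(p)>0$ at smooth points. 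The paper's arc argument is shorter and avoids local analysis; yours gives more information but costs more to justify.
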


The proof of this theorem and an important corollary are at the end of this section. First, we need 
to build up some theory about the forms in ${\rm Int}(f,e)$. 

\begin{Lemma} \label{lem:commonFactors}
Suppose $f_1$, $f_2$, and $h$ are real homogeneous polynomials.  
\begin{enumerate}[a{\rm )}]
\item The product $f_1\cdot f_2$ is hyperbolic with respect to $e$ if and only if both $f_1$ and $f_2$ are hyperbolic with respect to $e$. In this case, 
$C(f_1\cdot f_2 , e) = C(f_1,e)\cap C(f_2,e)$.  
\item If $f_1$ and $f_2$ are hyperbolic with respect to $e$, then $f_1\cdot h$ interlaces $f_1\cdot f_2$ if and only if
$h$ interlaces $f_2$. 
\item If $h$ interlaces $(f_1)^k f_2$ for $k\in \mathbb{N}$, then $(f_1)^{k-1}$ divides $h$. 
\end{enumerate}
\end{Lemma}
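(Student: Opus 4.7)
Plan. For all three parts, the strategy is to reduce to statements about real-rooted univariate polynomials by restricting along lines $te+a$.

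Part (a) follows immediately from the factorization $(f_1 f_2)(te+a) = f_1(te+a)\cdot f_2(te+a)$: a product of real univariate polynomials has only real roots iff each factor does, since nonreal roots come in conjugate pairs. The cone identity is then a direct consequence of the description $C(f,e) = \{a \in \R^n : f(te-a) \neq 0 \text{ for all } t \le 0\}$ recalled in the introduction, since $a \in C(f_1 f_2, e)$ iff both $f_1(te-a)$ and $f_2(te-a)$ avoid $t \le 0$.

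Part (b) reduces to the univariate statement: if $p, q, \tilde h \in \R[t]$ are real-rooted with $\deg \tilde h = \deg q - 1$, then $p\tilde h$ interlaces $pq$ iff $\tilde h$ interlaces $q$. A slick proof uses the classical fact that two real-rooted polynomials $\tilde h, q$ with $\deg \tilde h = \deg q - 1$ interlace iff $\tilde h + sq$ is real-rooted for every $s \in \R$: then $\tilde h$ interlaces $q$ iff $p(\tilde h + sq) = p\tilde h + s(pq)$ is real-rooted for every $s$ (using part (a) applied to $p$ and $\tilde h + sq$), iff $p\tilde h$ interlaces $pq$. Alternatively, one can argue directly from the definition by merging the sorted root-lists and checking the interlacing inequalities by induction on $\deg p$.

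Part (c) is the most delicate and is treated one irreducible factor of $f_1$ at a time. Fix an irreducible $p$ with multiplicity $m \ge 1$ in $f_1$; the goal is $p^{m(k-1)} \mid h$. For $a$ in a nonempty Zariski-open subset of $\R^n$, the univariate polynomial $p(te+a)$ has $\deg p$ distinct real roots---real by the hyperbolicity of $p$ (which follows from (a)) and distinct by squarefreeness of $p$ in characteristic zero together with a genericity argument on the discriminant. Each such root is a root of $(f_1)^k f_2(te+a)$ of multiplicity at least $mk$, so the interlacing definition forces it to appear in $h(te+a)$ with multiplicity at least $mk - 1 \ge m(k-1)$. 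Hence $p(te+a)^{m(k-1)}$ divides $h(te+a)$ for $a$ in a Zariski-dense set. To globalize, observe that divisibility of $h(te+a)$ by $p(te+a)^{m(k-1)}$ is a Zariski-closed condition on $a$ (the coefficients of the Euclidean remainder, performed in $\R[a][t]$ with leading coefficient $p(e)^{m(k-1)} \neq 0$, are themselves polynomials in $a$), so it holds identically in $\R[t, a]$; specializing $a \mapsto x - te$ then yields $p(x)^{m(k-1)} \mid h(x)$ in $\R[x]$. Combining this over all irreducible factors of $f_1$ gives $(f_1)^{k-1} \mid h$.

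The main obstacle is the globalization step in (c): carefully transferring the divisibility of univariate restrictions on a Zariski-dense family of lines back to divisibility in $\R[x]$, with the correct multiplicity for each irreducible factor of $f_1$.
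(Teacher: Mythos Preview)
Your proposal is correct and follows precisely the approach the paper indicates: reduce to the univariate case along lines $te+a$ and verify directly. The paper's own proof is a single sentence to that effect, so your write-up simply supplies the details the paper leaves implicit---the Obreschkoff-type pencil criterion for (b) and the Zariski-closedness/globalization step for (c).
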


\begin{proof} 
These statements are checked directly after reducing to the one-dimensional
case.
\end{proof}

\begin{Lemma}\label{lem:samesign}
For any $g$ and $h$ in ${\rm Int}(f,e)$, the product $g\cdot h$ is nonnegative on $\mathcal{V}_{\R}(f)$.  
\end{Lemma}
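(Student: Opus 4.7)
The plan is to reduce to the univariate setting by restricting to lines in the direction $e$ and then argue via a sign count on the interlacing roots. Fix a point $a\in\mathcal{V}_\R(f)$ and consider the restrictions $\tilde f(t) := f(te+a)$, $\tilde g(t) := g(te+a)$, $\tilde h(t) := h(te+a)$ in $\R[t]$, which have degrees $d, d{-}1, d{-}1$ and strictly positive leading coefficients $f(e), g(e), h(e)$, respectively. Since $f(a)=0$, the value $t=0$ is a root of $\tilde f$; let $m$ be its multiplicity and let $\alpha_1\le\cdots\le\alpha_d$ denote the ordered roots of $\tilde f$, with $\alpha_j=\cdots=\alpha_{j+m-1}=0$. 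By hypothesis the roots of $\tilde g$ and $\tilde h$ interlace those of $\tilde f$, which tightly constrains their location relative to $0$.

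The proof then proceeds by a short case distinction. If $m\ge 2$, the interlacing inequalities force the roots of $\tilde g$ with indices $j,\ldots,j+m-2$ to equal $0$, so $g(a)=\tilde g(0)=0$, and the same for $h(a)$; hence $g(a)h(a)=0$. If $m=1$, then the two roots of $\tilde g$ adjacent to $\alpha_j=0$ lie in $[\alpha_{j-1},0]$ and $[0,\alpha_{j+1}]$; if either equals $0$ then $g(a)=0$ and we are done, and otherwise $\tilde g$ has exactly $j-1$ negative and $d-j$ positive roots. Reading off the sign from $\tilde g(t)=g(e)\prod_i(t-\beta_i)$ yields $\operatorname{sign}(g(a))=(-1)^{d-j}$, and the identical computation for $\tilde h$ produces the same sign $(-1)^{d-j}$. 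Therefore $g(a)h(a)\ge 0$ in every case.

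I do not expect any serious obstacle; the argument is a direct sign count on univariate restrictions. The two subtleties, non-strict interlacing (so some $\beta_i$ adjacent to $\alpha_j$ equals $0$) and $0$ appearing with higher multiplicity as a root of $\tilde f$, in fact simplify matters, since they each force $g(a)$ or $h(a)$ to vanish outright. Notably the proof does not require the still-unproved Theorem~\ref{thm:Interlacers}: only the definition of interlacing with respect to $e$, together with the positivity assumptions $f(e),g(e),h(e)>0$, is used.
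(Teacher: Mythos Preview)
Your proposal is correct and follows essentially the same approach as the paper: both restrict to the line $t\mapsto te+a$ and use the interlacing of the univariate roots to determine the sign of $g\cdot h$ at the roots of $f(te+a)$. The paper phrases the count slightly differently---observing that $\tilde g\cdot\tilde h$ has an even number of roots between any two roots of $\tilde f$, so the sign of the product is the same at every root of $\tilde f$ and hence nonnegative since $g(e)h(e)>0$---while you compute the sign of $\tilde g(0)$ and $\tilde h(0)$ separately as $(-1)^{d-j}$; but the underlying idea is identical, and your treatment of the multiplicity and boundary cases is in fact more explicit than the paper's.
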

\begin{proof}
To prove this statement, it suffices to restrict to any line $x = te+a$ where $a\in \R^{n}$. Suppose that
$f(te+a) \in \R[t]$ has roots $\alpha_1\leq \cdots \leq  \alpha_d$ and $g(te+a)$ and $h(te+a)$
have roots $\beta_1\leq \cdots \leq \beta_{d-1}$ and  $\gamma_1\leq \cdots \leq  \gamma_{d-1}$, respectively. 
By the assumption that both $g$ and $h$ interlace $f$, we know that 
$\beta_i, \gamma_i\in [\alpha_i,\alpha_{i+1}]$ for all $1\leq i\leq d-1$. 
Thus, if $\alpha_i$ and $\alpha_j$ are not also roots of $g(te+a)$ or $h(te+a)$, the polynomial
$g(te+a)h(te+a)$ has an even number of roots in the interval $[\alpha_i, \alpha_j]$. 
Then the sign of $g(\alpha_ie+a)h(\alpha_ie+a)$ is the same for all $i$ for which it is not zero. 
Because $g(e)h(e)>0$, we see that that sign must be nonnegative. 
\end{proof}

 \begin{figure}[h]
   \includegraphics[width=5cm]{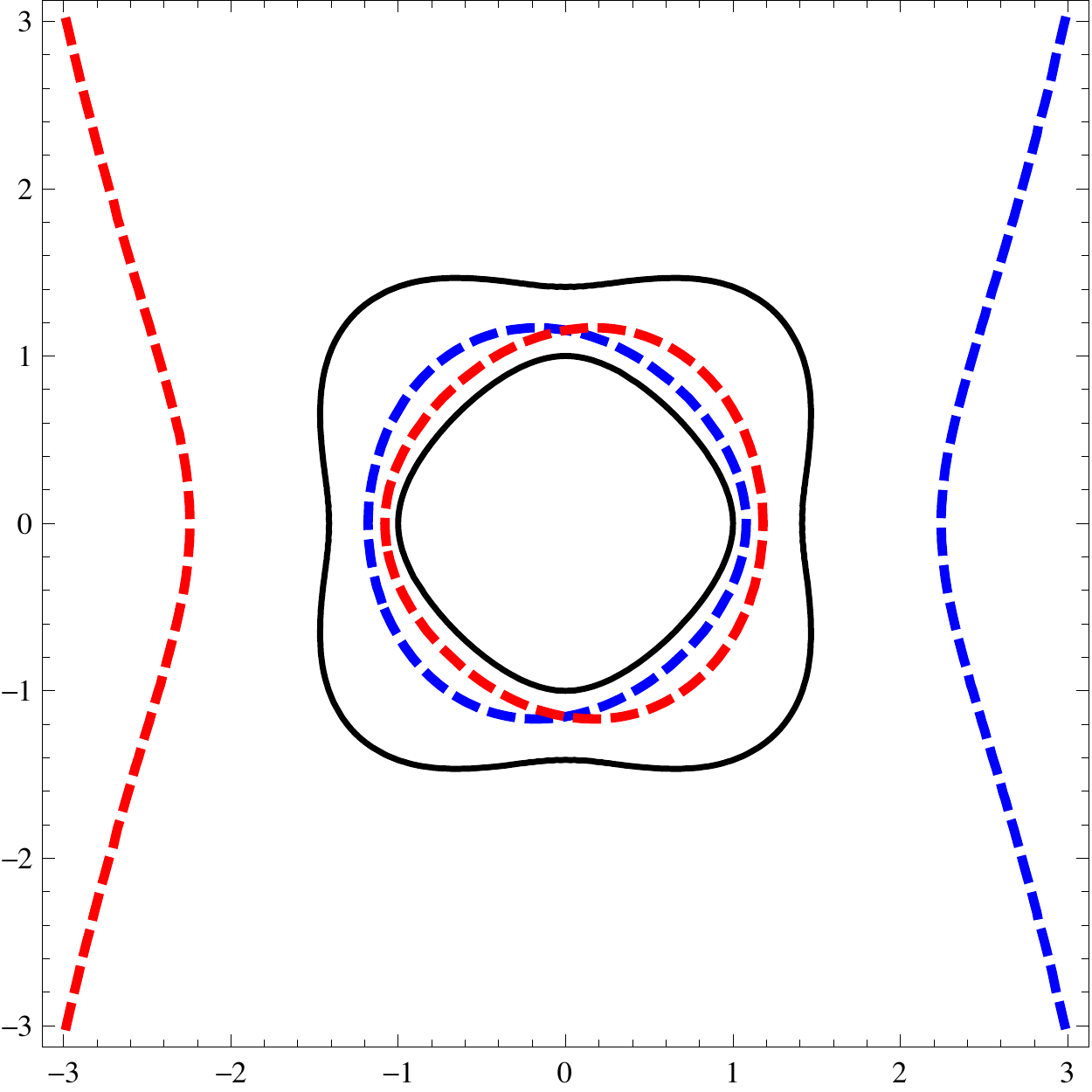} \hspace{5em}
 \includegraphics[width=5cm]{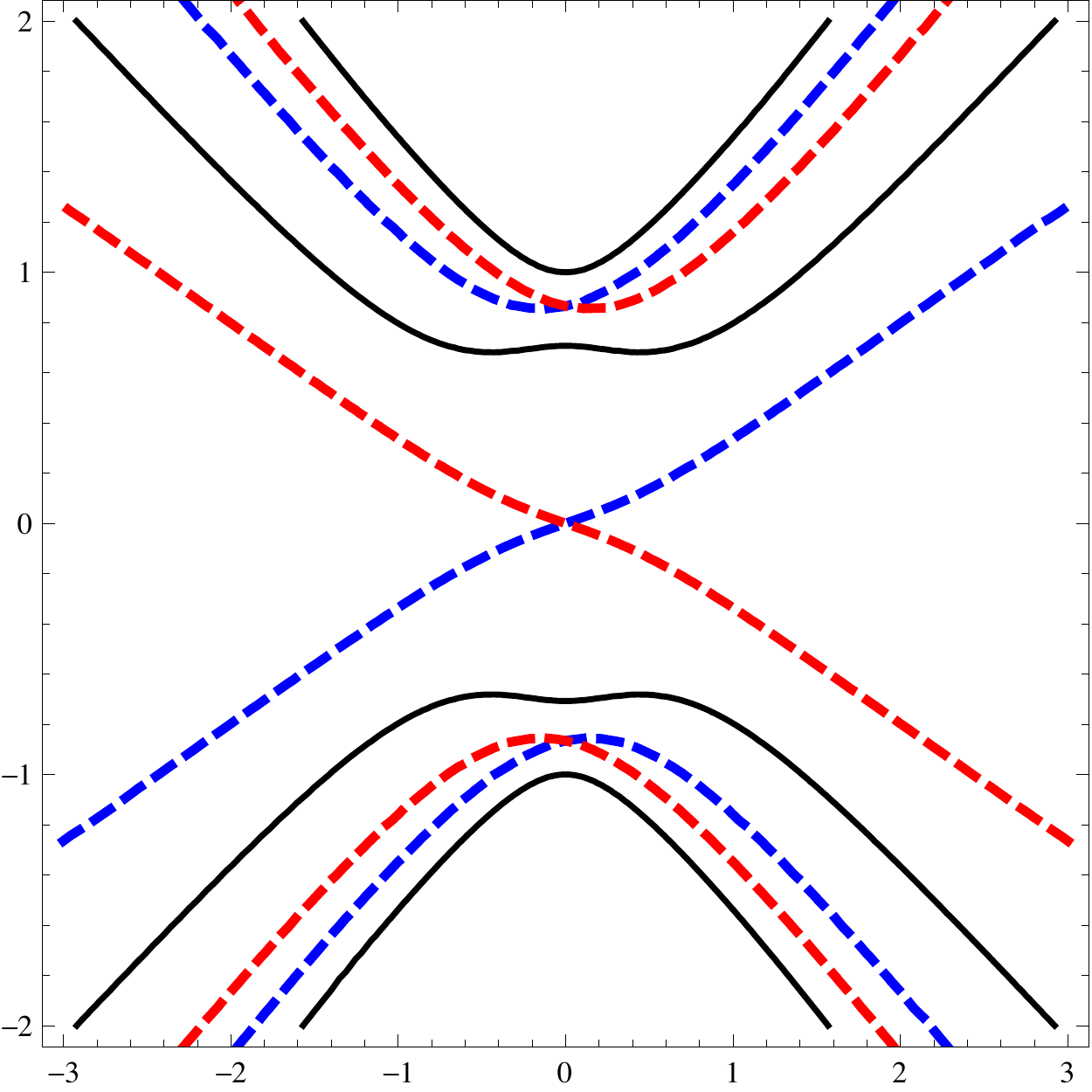}
\caption{Affine slices of two cubics interlacing a quartic. }
\label{fig:cubic}
\end{figure}

\begin{Lemma} \label{lem:nonnegInterlacers}
Suppose that $f$ is square-free and that $g \in {\rm Int}(f,e)$ \textit{strictly} interlaces $f$. 
Then a polynomial $h\in \R[\varx]_{d-1}$ belongs to ${\rm Int}(f,e)$ if and only if $g\cdot h$ is nonnegative on $\mathcal{V}_{\R}(f)$. 
\end{Lemma}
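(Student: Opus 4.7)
The forward direction is immediate from Lemma~\ref{lem:samesign} applied to the pair $g,h \in {\rm Int}(f,e)$. For the converse, I assume $g \cdot h \ge 0$ on $\sV_\R(f)$ and aim to show that $h$ interlaces $f$ with respect to $e$ and that $h(e) > 0$; as usual, the argument reduces to a univariate one on each line $x = te+a$.

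Fix $a$ in the Zariski-open subset $U \subset \R^n$ on which $f(te+a)$ has $d$ distinct real roots $\alpha_1 < \cdots < \alpha_d$ and the $d-1$ roots of $g(te+a)$ lie strictly inside the open intervals $(\alpha_i, \alpha_{i+1})$; such a $U$ is nonempty by the square-freeness of $f$ together with the strict interlacing hypothesis on $g$. Writing $g(te+a) = g(e)\prod_j(t-\beta_j)$ with $g(e) > 0$ and counting the signs of the factors $\alpha_i - \beta_j$ shows that $g(\alpha_i e+a)$ is nonzero with sign $(-1)^{d-i}$, in particular strictly alternating with $i$. The hypothesis $g(\alpha_i e+a)\,h(\alpha_i e+a) \ge 0$ then forces $h(\alpha_i e+a)$ to share this sign or vanish, so by the intermediate value theorem $h(te+a)$ has a root in each of the $d-1$ intervals $[\alpha_i,\alpha_{i+1}]$. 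Since $\deg_t h(te+a) \le d-1$, these exhaust its roots and they interlace those of $f(te+a)$. Because the interlacing property is a closed condition on $a$ (and the leading coefficient $h(e)$ is independent of $a$), the interlacing extends from the dense set $U$ to all of $\R^n$.

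It remains to check $h(e) > 0$. For $a \in U$ the point $\alpha_d$ lies to the right of every root of $g(te+a)$, so $g(\alpha_d e+a) > 0$, and the hypothesis gives $h(\alpha_d e+a) \ge 0$. On $[\alpha_d,\infty)$ the polynomial $h(te+a)$ has no further real roots, and its sign there equals $\mathrm{sign}(h(e))$; combining with $h(\alpha_d e+a) \ge 0$ yields $h(e) \ge 0$. The case $h(e) = 0$ would make $h(te+a)$ a polynomial of degree at most $d-2$ carrying $d-1$ interlacing real roots for generic $a$, forcing $h \equiv 0$; this is excluded once $h$ is assumed nonzero (the zero polynomial case is trivial). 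The main obstacle in the argument is the clean treatment of configurations where some $h(\alpha_i e+a)$ vanish even for generic $a$, which signals a common factor between $f$ and $h$; Lemma~\ref{lem:commonFactors} then allows one to strip this factor and rerun the argument in lower degree. Throughout, the strict interlacing of $g$ is what powers the intermediate value step by supplying the strict sign alternation of $g(\alpha_i e+a)$.
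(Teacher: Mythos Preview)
Your approach matches the paper's: reduce to generic lines through $e$, use the strict sign alternation of $g$ at the roots of $f(te+a)$ to force a root of $h(te+a)$ in each gap via the intermediate value theorem, then pass to the closure. You also supply the verification of $h(e)>0$, which the paper leaves implicit.

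The one place your proof is thinner is the common-factor case. Your plan to ``strip the factor via Lemma~\ref{lem:commonFactors} and rerun the argument in lower degree'' does not close as stated: after writing $f=f_1f_2$ and $h=f_1h_1$, Lemma~\ref{lem:commonFactors}(b) indeed reduces the task to showing that $h_1$ interlaces $f_2$, but you no longer have a strict interlacer of $f_2$ paired with a nonnegativity hypothesis on $h_1$ to plug back into the lemma---there is nothing ready to rerun. The paper handles this case directly by a parity count on the roots of $f_2$: between consecutive roots $\alpha<\beta$ of $f_2(te+a)$ containing $k$ roots of $f_1$, strict interlacing forces $g$ to have exactly $k+1$ roots in $(\alpha,\beta)$, so $g\cdot f_1$ has $2k+1$ roots there and hence opposite signs at $\alpha$ and $\beta$; since $g\cdot f_1\cdot h_1\ge 0$ at both endpoints, $h_1$ must have a root in $[\alpha,\beta]$. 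That odd-count argument is the piece your sketch is pointing toward but does not supply.
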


\begin{proof} One direction follows from Lemma~\ref{lem:samesign}.
  For the other, let $h\in \R[\varx]_{d-1}$ for which $g\cdot h$ is
  nonnegative on $\mathcal{V}_\R(f)$. First, let us consider the case
  where $f$ and $h$ have no common factor.  Then, for generic $a \in
  \R^n$, the roots of $f(te+a)$ are distinct from each other and from
  the roots of $g(te+a)$ and $h(te+a)$.  The product $g(te+a)h(te+a)$
  is then positive on all of the roots of $f(te+a)$. Since $g(te+a)$
  changes sign on consecutive roots of $f(te+a)$, we see that
  $h(te+a)$ must have a root between each pair of consecutive roots of
  $f(te+a)$, and thus $h$ interlaces $f$ with respect to $e$.

  Now suppose $f=f_1\cdot f_2$ and $h=f_1\cdot h_1$. We will show that
  $h_1$ interlaces $f_2$, and thus $h$ interlaces $f$.  Again, we can
  choose generic $a$ for which the roots of $f(te+a)$ and $g(te+a)$
  are all distinct.  Consider two consecutive roots
  $\alpha<\beta$ of the polynomial $f_2(te+a)$. Let $k$ be the
  number of roots of $f_1(te+a)$ in the interval $(\alpha,
  \beta)$.  Because $g$ strictly interlaces $f=f_1\cdot f_2$, its
  restriction $g(te+a)$ must have $k+1$ roots in the interval
  $(\alpha, \beta)$.  Thus the polynomial $g(te+a)f_1(te+a)$ has
  an odd number of roots in this interval and must therefore have
  different signs in $\alpha$ and $\beta$. Since $g\cdot f_1
  \cdot h_1 \geq 0$ on $V(f)$, the polynomial $h_1(te+a)$ must have a
  root in this interval. Thus $h_1$ interlaces $f_2$ and $h$
  interlaces $f$.
\end{proof}

\begin{Example} In the above lemma, it is indeed necessary that $f$
  and $g$ be without common factors. For example, consider
  $f=(x^2+y^2-z^2)(x-2z)$ and $g=(x^2+y^2-z^2)$.  Both $f$ and $g$ are
  hyperbolic with respect to $[0:0:1]$ and $g$ interlaces $f$ with
  respect to this point. However if $h=y(x-2z)$, then $g\cdot h$
  vanishes identically on $\mathcal{V}_{\R}(f)$ but $h$ does not
  interlace $f$.
\end{Example}

  For $a\in C(f,e)$, the derivative $D_af$ obviously
  interlaces $f$ with respect to $a$, since $f$ is hyperbolic with respect
  to $a$. We need to show that $D_af$ also interlaces $f$ with
  respect to $e$. 

\begin{Lemma}\label{lem:main}
For $a\in C(f,e)$, the polynomial $D_e f\cdot D_a f$ is nonnegative on $\mathcal{V}_{\R}(f)$. 
\end{Lemma}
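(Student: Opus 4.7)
The inequality splits according to whether $\nabla f$ vanishes. At a point $p \in \sV_\R(f)$ with $\nabla f(p) = 0$, both $D_e f(p)$ and $D_a f(p)$ vanish, so the product is trivially nonnegative there. I therefore focus on smooth real points $p$ of $\sV_\R(f)$, where $\nabla f(p) \ne 0$.

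The key geometric claim is that the hyperplane $H_p = \{v \in \R^n : \nabla f(p) \cdot v = 0\}$ fails to meet the hyperbolicity cone $C(f,e)$. Granted this, convexity (hence connectedness) of $C(f,e)$ forces $C(f,e)$ to lie in a single open half-space of $H_p$, so $D_e f(p) = \nabla f(p) \cdot e$ and $D_a f(p) = \nabla f(p) \cdot a$ share a nonzero sign; hence $D_e f(p) \cdot D_a f(p) > 0$.

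To prove the claim, assume for contradiction that $D_v f(p) = 0$ for some $v \in C(f,e)$, and set $\psi(t) = f(p + tv)$. Then $t=0$ is a root of $\psi$ of some multiplicity $k \ge 2$, while $\psi$ still has only real roots since $f$ is hyperbolic with respect to $v \in C(f,v)$. Pick $w \in \R^n$ with $D_w f(p) \ne 0$, possible because $\nabla f(p) \ne 0$, and set $\psi_\epsilon(t) = f(p + \epsilon w + tv)$, which again has only real roots for every $\epsilon$. A Newton polygon analysis shows that the $k$ roots of $\psi_\epsilon$ accumulating at $0$ as $\epsilon \to 0$ satisfy, to leading order, $c_k t^k \sim -\epsilon D_w f(p)$, with $c_k = \psi^{(k)}(0)/k!$. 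Since $t^k = c$ admits at most two real solutions for $c \ne 0$, for $k \ge 3$ --- or for $k = 2$ after flipping the sign of $\epsilon$ appropriately --- fewer than $k$ clustered roots are real, forcing $\psi_\epsilon$ to have a non-real complex conjugate pair near $0$ and contradicting the hyperbolicity of $f$ with respect to $v$. The main technical point I expect to be the bottleneck is justifying this local root-counting step rigorously (via Puiseux expansions of the roots, or by invoking Rouch\'e on a disk of radius comparable to $|\epsilon|^{1/k}$); everything else is elementary convex geometry.
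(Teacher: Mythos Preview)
Your approach is correct and genuinely different from the paper's. The paper fixes $x\in\sV_\R(f)$ and tracks the ordered roots $\alpha_1(b,x)\le\cdots\le\alpha_d(b,x)$ of $f(tb+x)$ as $b$ moves along the segment $[e,a]\subset C(f,e)$. Since $f(x)=0$, one of these roots is always $0$; the paper asserts (via a ``non-crossing arcs'' picture in the plane $x+\mathrm{span}\{e,a\}$) that the index $k$ with $\alpha_k(b,x)=0$ is the same for all $b$ on the segment, and then reads off that $D_ef(x)$ and $D_af(x)$ both have sign $(-1)^{d-k}$. You instead prove directly that for every smooth $p\in\sV_\R(f)$ the tangent hyperplane $H_p$ misses $C(f,e)$, and then let convexity force $D_ef(p)$ and $D_af(p)$ to share a sign. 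Your route yields a stronger intermediate statement---essentially the pointwise version of what the paper later cites as \cite[Lemma~2.4]{PV}---at the cost of the Newton polygon/Rouch\'e step you correctly flag as the only nontrivial point (it is routine: rescale $t=|\epsilon|^{1/k}s$, observe that the rescaled polynomial converges uniformly on compacta to $c_ks^k+\mathrm{sgn}(\epsilon)\,D_wf(p)$, and apply Hurwitz). Note also that the paper's assertion that the index $k$ is constant along $[e,a]$ is itself equivalent, at smooth $x$, to your claim that $D_bf(x)\neq 0$ for all $b$ on the segment; so the two arguments are closer than they look, and yours can be read as supplying a rigorous justification for the step the paper treats pictorially.
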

\begin{proof} 
 For any $b\in C(f,e)$ and $x\in \mathcal{V}_{\R}(f)$, let
  $\alpha_1(b,x)\leq \cdots \leq \alpha_d(b,x)$ denote the roots of
  $f(tb +x)$.  Because $C(f,e)$ is convex, the line segment joining
  $e$ and $a$ belongs to this cone.  As we vary $b$ from $e$ to $a$
  along this line segment, the roots $\{\alpha_i(b,x)b +x\}_{i\in
    [d]}$, form $d$ \textit{non-crossing} arcs in the plane $x+{\rm
    span}\{e,a\}$, as shown in Figure~\ref{fig:arcs}.  Since $f(x)=0$,
  one of these arcs is just the point $x$. That is, there is some $k$
  for which $\alpha_k(b,x)=0$ for all $b$ in the convex hull of $e$
  and $a$.

Now $f(e)>0$ implies $f(b)>0$ for all $b\in C(f,e)$. Thus $\frac{\partial}{\partial t}f(tb+x)$ is positive 
for $t>\alpha_d(b,x)$. Furthermore, the sign of this derivative on the $i$th root, $\alpha_i(b,x)$ depends only on $i$. Specifically, 
for all $i=1,\hdots, d$, 
\[ (-1)^{d-i}\cdot D_b f(\alpha_{i}(b,x)b+x) \; \geq \;0. \]
In particular, the sign of $D_bf$ on the $k$th root, $\alpha_k(b,x)b+x = x$, is constant:
\[(-1)^{d-k}D_bf(x) \geq 0.\]
Then, regardless of $k$, the product $D_ef(x)D_af(x)$ is non-negative. 
\end{proof}

 \begin{figure}
 \includegraphics[width=6cm]{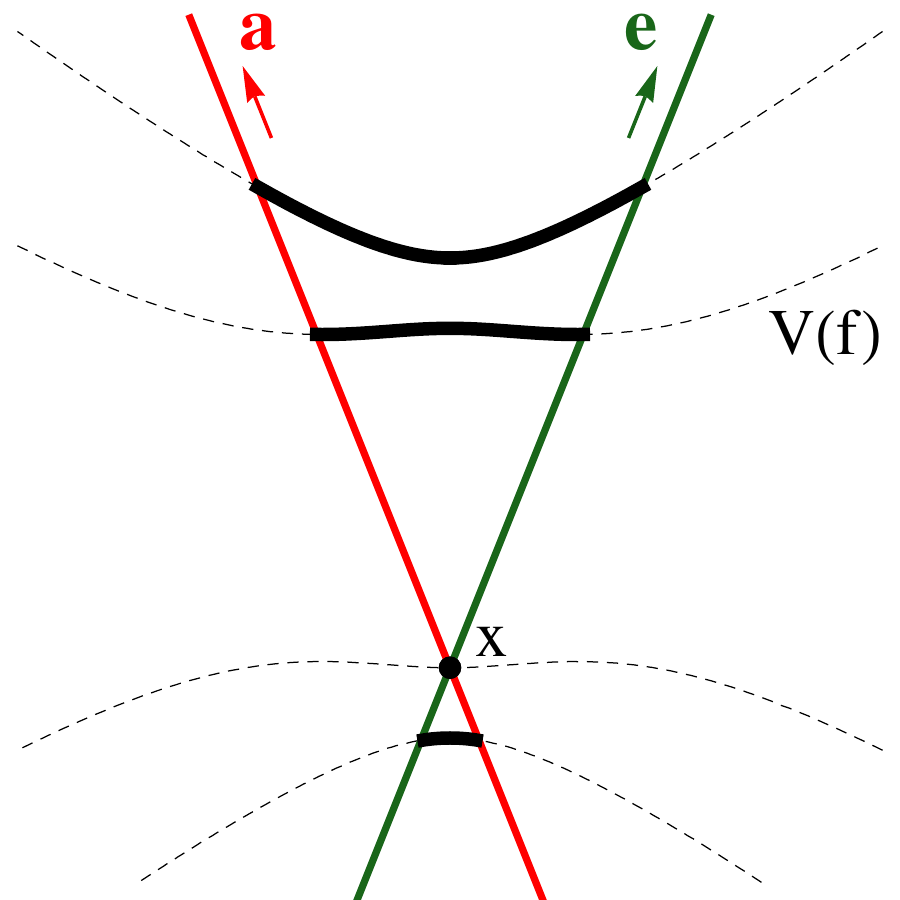} 
\caption{Non-crossing arcs of Lemma~\ref{lem:main}. }
\label{fig:arcs}
\end{figure}

Now we are ready to prove Theorem~\ref{thm:Interlacers}.

\begin{proof}[Proof of Theorem~\ref{thm:Interlacers}] $\;$ \smallskip

 ($4 \Rightarrow 3$) Clear. \medskip

($1 \Leftrightarrow 3$) If $f$ is square free, then $D_ef$ strictly interlaces $f$. 
This equivalence then follows from Lemma~\ref{lem:nonnegInterlacers}.\medskip

($1,3 \Rightarrow 4$) Here we need a useful fact about Wronskians. 
The Wronskian of two univariate polynomials $p(t),q(t)$ is the polynomial
\[ W(p,q) \; = \; p\cdot q' - p' \cdot q \; = \; q^2 \cdot \left(
  \frac{p}{q}\right)'.\] It is a classical fact that if the roots of
$p$ and $q$ are all distinct and interlace, then $W(p,q)$ is a
nonnegative or nonpositive polynomial \cite[\S2.3]{Wag}.  Thus
if $h\in {\rm Int}(f,e)$ is coprime to $f$, then for generic $x$, the roots of
$f(te+x)$ and $h(te+x)$ interlace and are all distinct.  Thus their
Wronskian $ h(te+x) f'(te+x)- h'(te+x) f(te+x)$ is either nonnegative
or nonpositive for all $t$.  By $(3)$, the product $h(te+x)f'(te+x)$
is nonnegative on the zeroes of $f$, so we see that the Wronskian is
nonnegative.  Setting $t=0$ gives us that $h\cdot D_ef - D_eh\cdot f $
is nonnegative for all $x\in \R^n$, as desired. If $f$ and $h$
share a factor, say $f=f_1\cdot f_2$, $h=f_1\cdot h_1$, we can use the identity
$W(f_1\cdot f_2,f_1\cdot h_1)=f_1^2W(f_2,h_1)$ to reduce to the coprime case. \medskip

($2 \Leftrightarrow 1$)
Because $f$ is square free, both $D_e f$ and $D_af$ share no factors with $f$. Thus $D_ef$ strictly interlaces
$f$ with respect to $e$ and $D_af$ strictly interlaces $f$ with respect to $a$.  

Suppose $h$ interlaces $f$ with respect
  to $a$ and $h(a)>0$. 
  By Lemma~\ref{lem:nonnegInterlacers}, $h\cdot D_af$ is nonnegative
  on $\mathcal{V}_{\R}(f)$. Using Lemma~\ref{lem:main}, we see that $D_ef
  \cdot D_af$ is also nonnegative on $\mathcal{V}_{\R}(f)$.  Taking
  the product, it follows that $(D_af)^2\cdot D_ef \cdot h $ is
  nonnegative on $\mathcal{V}_{\R}(f)$.  Because 
  $D_af $ and $f$ have no common factors, we can conclude that
  $D_ef \cdot h$ is nonnegative on $\mathcal{V}_{\R}(f)$. Using
  Lemma~\ref{lem:nonnegInterlacers} again, we have $h\in {\rm Int}(f,e)$.
  Switching the roles of $a$ and $e$ in this argument gives the reverse implication. 
\end{proof}

\begin{Cor}\label{cor:InterlacersNon} The set ${\rm Int}(f,e)$ is a closed
  convex cone.  If $f$ is square-free, this cone is linearly
  isomorphic to a section of the cone of nonnegative polynomials of
  degree $2\deg(f)-2$:
\begin{equation}\label{eq:IntNonneg2}
 {\rm Int}(f,e) \;=\; \{ h\in \R[\varx]_{\deg(f)-1} \;:\; D_ef \cdot h - f \cdot D_eh \;\geq 0 \;\text{ on } \R^{n} \}.
 \end{equation}
 If $f=f_1\cdot f_2$ where $\mathcal{V}(f)=\mathcal{V}(f_2)$ and $f_2$ is square-free, 
 then 
\[{\rm Int}(f,e) \;=\; f_1 \cdot {\rm Int}(f_2, e)\]
and is isomorphic to a section of the cone of nonnegative polynomials 
of degree $2\deg(f_2)-2$.
\end{Cor}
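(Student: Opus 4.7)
The plan is to derive the corollary from Theorem~\ref{thm:Interlacers} together with parts (a)--(c) of Lemma~\ref{lem:commonFactors}, with no new ideas needed. I would handle the square-free case first, where things are nearly immediate, and then reduce the general case to it by factoring out the non-reduced part $f_1$.

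For square-free $f$ of degree $d$, the identity \eqref{eq:IntNonneg2} is exactly the equivalence $(1)\Leftrightarrow(4)$ of Theorem~\ref{thm:Interlacers}. The linear map $\phi\colon \R[x]_{d-1}\to\R[x]_{2d-2}$ given by $\phi(h) = D_ef\cdot h - f\cdot D_eh$ is injective, since $f$ and $D_ef$ are coprime (as $D_ef$ strictly interlaces $f$), and \eqref{eq:IntNonneg2} identifies ${\rm Int}(f,e)$ with $\phi(\R[x]_{d-1})$ intersected with the cone of nonnegative forms of degree $2d-2$. This realizes ${\rm Int}(f,e)$ as a linear slice of that cone and shows it is closed and convex.

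For the general case, write $f = p_1^{k_1}\cdots p_r^{k_r}$ with distinct irreducibles $p_i$, so that $f_2 = \prod_i p_i$ and $f_1 = \prod_i p_i^{k_i-1}$. Both factors are hyperbolic with respect to $e$ by Lemma~\ref{lem:commonFactors}(a). The key identity is ${\rm Int}(f,e) = f_1\cdot {\rm Int}(f_2,e)$. The inclusion ``$\supseteq$'' is a direct application of Lemma~\ref{lem:commonFactors}(b): $f_1\cdot h'$ interlaces $f_1\cdot f_2$ iff $h'$ interlaces $f_2$, and positivity at $e$ is preserved since $f_1(e)>0$. For ``$\subseteq$'', Lemma~\ref{lem:commonFactors}(c) applied to each $p_i$ in turn (grouping the remaining factors into the lemma's $f_2$) yields $p_i^{k_i-1}\mid h$ for all $i$, hence $f_1\mid h$; writing $h = f_1 h'$, Lemma~\ref{lem:commoncommonFactors}(b) again shows $h'\in{\rm Int}(f_2,e)$.

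Once the identity is in hand, the remaining claims follow: multiplication by $f_1$ is a linear injection onto a closed subspace of $\R[x]_{d-1}$, carrying the closed convex cone ${\rm Int}(f_2,e)$ isomorphically onto ${\rm Int}(f,e)$ and transferring the realization as a slice of the cone of nonnegative forms of degree $2\deg(f_2)-2$ from the square-free case. The main point where I would slow down is the iteration of Lemma~\ref{lem:commonFactors}(c) across several distinct irreducible factors of $f$; this is a straightforward bookkeeping argument but is the only step that genuinely uses the structure of the non-reduced case.
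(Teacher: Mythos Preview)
Your proposal is correct and follows essentially the same route as the paper: use Theorem~\ref{thm:Interlacers}(1)$\Leftrightarrow$(4) and injectivity of $h\mapsto D_ef\cdot h - f\cdot D_eh$ for the square-free case, then invoke Lemma~\ref{lem:commonFactors}(b),(c) to reduce the general case via ${\rm Int}(f,e)=f_1\cdot{\rm Int}(f_2,e)$. Your explicit iteration of Lemma~\ref{lem:commonFactors}(c) over the distinct irreducible factors is a welcome clarification of a step the paper leaves implicit; just fix the typo ``\verb|lem:commoncommonFactors|''.
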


\begin{proof}
For square-free $f$, the description \eqref{eq:IntNonneg2} follows directly from Theorem~\ref{thm:Interlacers}.  
The map \[h \;\; \mapsto\;\;  D_ef \cdot h - f \cdot D_eh\]  
is a linear map from $\R[\varx]_{\deg(f)-1}$ to $\R[\varx]_{(2\deg(f)-2)}$. We see that  ${\rm Int}(f,e)$ is the 
preimage of the cone of nonnegative polynomials in $\R[\varx]_{(2\deg(f)-2)}$ under this map. 
We can also check that this map is injective. Because $f$ is square free, $D_ef$ and $f$ are coprime. Hence 
if $f$ were to divide $D_ef \cdot h$, then $f$ would have to divide $h$, which it cannot. Thus $D_ef \cdot h - f \cdot D_eh$
cannot be identically zero. 

If $f$ is not square-free, then $f$ factors as $f_1\cdot f_2$ as above. By Lemma~\ref{lem:commonFactors}(c),
any polynomial that interlaces $f$ must be divisible by $f_1$. By part (b), the remainder must interlace $f_2$. Thus
${\rm Int}(f,e) \subseteq f_1 \cdot {\rm Int}(f_2, e)$.  Similarly, if $h$ interlaces $f_2$, then $f_1\cdot h$ interlaces $f=f_1\cdot f_2$. 
Thus ${\rm Int}(f,e)$ is the image of the convex cone ${\rm Int}(f_2,e)$ under a linear map, namely multiplication by $f_1$. This 
shows that it is linearly isomorphic to a section of the cone of nonnegative polynomials of degree $2\deg(f_2)-2$.
\end{proof}

%%%%%%%%%%%%%%%%%%%%%%%%%%%%%%%%%%%%%%%%%%%%%%%

\section{Hyperbolicity cones and Nonnegative polynomials} \label{sec:hypCone}

An interesting consequence of the results in the preceding section is 
that we can recover the hyperbolicity cone $C(f,e)$ as a linear section of ${\rm Int}(f,e)$, and thus as a 
linear section of the cone of nonnegative polynomials. 
We show this by considering which partial derivatives $D_a(f)$ interlace $f$. Using
 Theorem~\ref{thm:Interlacers}, we often have to deal with the polynomials 
\[\Delta_{e,a}f \;\;=\;\;  D_e f \cdot D_a f - f\cdot D_eD_a f.
\]
Notice that $\Delta_{e,a}f$ is homogeneous of 
degree $2d-2$, symmetric in $e$ and $a$, and linear in each.

\begin{Thm} \label{thm:niceCones}
Let $f\in \R[\varx]_d$ be square-free and hyperbolic with respect to the point 
$e\in \R^{n}$. The intersection of ${\rm Int}(f,e)$ with the plane spanned by the partial derivatives of $f$
is the image of $\overline{C(f,e)}$ under the linear map $a \mapsto D_a f$. That is, 
\begin{equation}\label{eq:slicedInt}
\overline{C(f,e)} \;\; = \;\; \{ a\in \R^{n} \;:\; D_a f \;\in\; {\rm Int}(f,e) \}.
\end{equation}
Furthermore, $\ol{C(f,e)}$ is a section of the cone of nonnegative polynomials of degree $2d-2$: 
\begin{equation}\label{eq:RenCone}
 \overline{C(f,e)} \;\; = \;\; \{ a \in \R^{n} \;\;:\;\;  \Delta_{e,a}f \geq 0 \text{ on } \R^{n}\}.
 \end{equation}
\end{Thm}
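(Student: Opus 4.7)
The plan is to first observe that Theorem~\ref{thm:Interlacers}, $(1)\Leftrightarrow(4)$, applied to $h=D_a f$, identifies $D_a f\in{\rm Int}(f,e)$ with $\Delta_{e,a}f\ge 0$ on $\R^n$ (note $D_eh = D_eD_af$). Hence (\ref{eq:slicedInt}) and (\ref{eq:RenCone}) are automatically equivalent, and it suffices to prove $\overline{C(f,e)}=\{a\in\R^n : D_a f\in{\rm Int}(f,e)\}$.

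For the inclusion ``$\subseteq$'', take $a\in C(f,e)$. Since all roots of $t\mapsto f(te+a)$ are strictly negative for such $a$, a short calculation gives $D_af(e)>0$. Combined with Lemma~\ref{lem:main} (which says $D_ef\cdot D_af\ge 0$ on $\sV_\R(f)$) and Theorem~\ref{thm:Interlacers}, $(3)\Rightarrow(1)$, this yields $D_af\in{\rm Int}(f,e)$. The case of arbitrary $a\in\overline{C(f,e)}$ follows by approximation, since ${\rm Int}(f,e)$ is closed by Corollary~\ref{cor:InterlacersNon}.

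For the reverse inclusion, set $S=\{a\in\R^n : D_af\in{\rm Int}(f,e)\}$, which is a closed convex cone. Fix $b\in C(f,e)$ such that $f(te+b)$ has $d$ distinct real roots; such $b$ exists since $f$ is square-free. For any $a\in S$, convexity of $S$ gives $a+\epsilon b\in S$ for every $\epsilon>0$. I claim that for all but finitely many $\epsilon>0$ the polynomial $P_\epsilon(t):=f(te+(a+\epsilon b))$ has $d$ distinct nonzero real roots $\lambda_1<\cdots<\lambda_d$: both the discriminant $\mathrm{disc}_t P_\epsilon$ and the constant term $P_\epsilon(0)=f(a+\epsilon b)$ are nonzero polynomials in $\epsilon$, because for large $\epsilon$ their behaviour is controlled by $f(te+b)$ and $f(b)$.

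The core computation exploits the homogeneity of $f$ to factor
\[
f(te+\tau(a+\epsilon b))\;=\;f(e)\prod_i(t-\tau\lambda_i);
\]
differentiating in $\tau$ at $\tau=1$ and evaluating at $t=\lambda_k$ yields the identity $D_{a+\epsilon b}f(\lambda_k e+(a+\epsilon b))=-\lambda_k\cdot P'_\epsilon(\lambda_k)$. Since $a+\epsilon b\in S$, the polynomial $D_{a+\epsilon b}f(te+(a+\epsilon b))$ interlaces $P_\epsilon(t)$, and both have strictly positive leading coefficient ($D_bf(e)>0$ because $b\in C(f,e)$, and $D_af(e)\ge 0$ from evaluating $\Delta_{e,a}f$ at $x=e$ via Euler's identity). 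A standard sign comparison then forces $D_{a+\epsilon b}f(\lambda_k e+(a+\epsilon b))/P'_\epsilon(\lambda_k)>0$ for every $k$, whence $-\lambda_k>0$, so all roots of $P_\epsilon$ are strictly negative and $a+\epsilon b\in C(f,e)$. Letting $\epsilon\to 0^+$ through generic values gives $a\in\overline{C(f,e)}$. The main obstacle is precisely that at $\epsilon=0$ the polynomial $f(te+a)$ may have multiple roots or a root at zero, in which case the sign comparison fails directly; the perturbation by $\epsilon b$ with $b\in C(f,e)$ is designed specifically to restore genericity while keeping us inside~$S$.
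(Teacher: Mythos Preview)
Your proof is correct. The inclusion $\overline{C(f,e)}\subseteq S$ follows the paper's line (Lemma~\ref{lem:main} plus closedness of ${\rm Int}(f,e)$), but your argument for $S\subseteq\overline{C(f,e)}$ is genuinely different.

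The paper argues the reverse inclusion by contraposition. Given $a\notin\overline{C(f,e)}$ with $f(a)\neq 0$ and $a\notin -\overline{C(f,e)}$, the roots of $f(te+a)$ straddle zero, say $\alpha_i<0<\alpha_{i+1}$. Passing to the reciprocal polynomial $f(e+ta)$ (whose roots are $1/\alpha_j$), Rolle's theorem produces a root $\beta$ of $D_af(e+ta)$ in the interval $(1/\alpha_1,1/\alpha_d)$; then $1/\beta$ is a root of $D_af(te+a)$ lying \emph{outside} $[\alpha_1,\alpha_d]$, so $D_af$ cannot interlace $f$. This is an explicit geometric witness to the failure of interlacing.

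Your route instead stays on the positive side: starting from $a\in S$, you perturb to $c=a+\epsilon b$ and exploit the homogeneity identity $D_cf(\lambda_k e+c)=-\lambda_k\,P'_\epsilon(\lambda_k)$ together with the interlacing sign pattern to force $\lambda_k<0$ directly. (It is worth noting that the strict sign comparison is self-justifying here: if $D_cf(\lambda_k e+c)$ vanished, the identity would give $\lambda_k=0$, contradicting $f(c)\neq 0$.) This avoids both the case distinction on $\pm\overline{C(f,e)}$ and the reciprocal-root manoeuvre, at the price of a genericity perturbation in $\epsilon$. The paper's argument is more visual and pinpoints where interlacing breaks; yours is algebraically cleaner and unifies the cases into a single computation.
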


\begin{proof}
  Let $C$ be the set on the right hand side of
  \eqref{eq:slicedInt}.
  From Theorem~\ref{thm:Interlacers}, we see that $D_a f$ interlaces
  $f$ with respect to $e$ for all $a\in C(f,e)$. This shows
  $C(f,e)\subset C$ and hence the inclusion $\overline{C(f,e)}\subset
  C$, since $C$ is closed. If this inclusion were strict, there would exist a point $a\in
  C\setminus\overline{C(f,e)}$ with $f(a)\neq 0$, since $C$ is also a
  convex cone by Corollary~\ref{cor:InterlacersNon}. 
  Thus to show the reverse inclusion, it therefore suffices to show that
  for any point $a$ outside of $\overline{C(f,e)}$ with $f(a)\neq 0$,
  the polynomial $D_a f$ does not belong to ${\rm Int}(f,e)$. If $a$
  belongs to $-\overline{C(f,e)}$, then $-D_a f$ belongs to ${\rm
    Int}(f,e)$. In particular, $-D_a f (e) >0$ and $D_a f$ does not
  belong to ${\rm Int}(f,e)$. Thus we may assume
  $a\notin\overline{C(f,e)}\cup -\overline{C(f,e)}$.
  Since $f$ is hyperbolic
  with respect to $e$, all of the roots $\alpha_1\leq \cdots \leq
  \alpha_d$ of $f(te+a)$ are real.  The reciprocals of these roots,
  $1/\alpha_1, \hdots 1/\alpha_d$, are roots of the polynomial
  $f(e+ta)$.

Because $a$ is not in $\overline{C(f,e)}\cup -\overline{C(f,e)}$, there is some $1\leq i <
n$ for which $\alpha_i < 0<\alpha_{i+1}$.  Since $f(e)\neq 0$ and $f(a)\neq 0$, we can take reciprocals to
find the roots of $f(e+ta)$:
\[ \frac{1}{\alpha_i} \leq \frac{1}{\alpha_{i-1}} \leq \cdots \leq \frac{1}{\alpha_1} < 0 < \frac{1}{\alpha_d} \leq \frac{1}{\alpha_{d-1}}\leq \cdots \leq \frac{1}{\alpha_{i+1}}.\]

By Rolle's Theorem, the roots of $\frac{\partial}{\partial t} f(e+ta)$ interlace those of $f(e+ta)$.  
Note that the polynomial $\frac{\partial}{\partial t} f(e+ta)$ is precisely $D_a f(e+ta)$, so the roots of $D_a f(e+ta)$ interlace those of $f(e+ta)$.
In particular, there is some root $\beta$ of $D_a f(e+ta)$ in the open interval $(1/\alpha_1, 1/\alpha_d)$, and thus $1/\beta \not\in [\alpha_1,  \alpha_d]$
is a zero of $D_a f(te+a)$. Therefore $D_a f(te+a)$ has only $d-2$ roots in the interval $[\alpha_1,  \alpha_d]$ and thus cannot interlace $f$ with respect to $e$.

Combining this with Theorem~\ref{thm:Interlacers} shows the equality in \eqref{eq:RenCone}.
\end{proof}

 \begin{figure}[h] 
 \includegraphics[width=5cm]{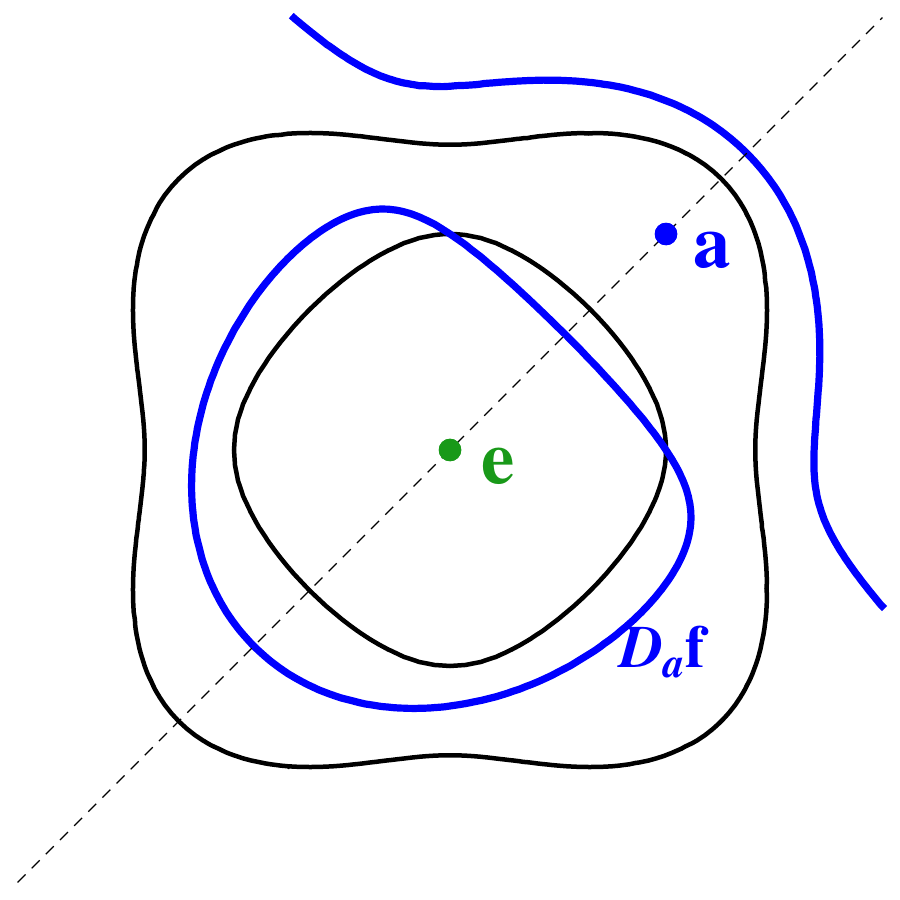}  \vspace{-.4cm}
\caption{For $a$ outside of the hyperbolicity cone, $D_af$ does not interlace $f$. }
\label{fig:cubic}
\end{figure}

\begin{Cor} Relaxing nonnegativity to sums-of-squares in \eqref{eq:RenCone} gives an inner approximation
to the hyperbolicity cone of $f$:
\begin{flalign}\label{eq:sosRelax}
 && \{a \in \R^{n}\;:\; \Delta_{e,a}f\ \text{ is a sum of squares } \} \;\; \subseteq \;\; \overline{C(f,e)}.&&\qed
 \end{flalign}
\end{Cor}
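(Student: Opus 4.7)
The plan is short because the corollary is essentially a direct consequence of the identity \eqref{eq:RenCone} established in Theorem~\ref{thm:niceCones}. First I would recall the elementary fact that any polynomial in $\R[\varx]$ expressible as a sum of squares of real polynomials is nonnegative at every point of $\R^n$. Hence, if $a\in\R^n$ is such that $\Delta_{e,a}f$ is a sum of squares in $\R[\varx]_{2d-2}$, then $\Delta_{e,a}f\ge 0$ on $\R^n$.

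Second, I would invoke the characterization \eqref{eq:RenCone}: this pointwise nonnegativity is exactly what is needed to conclude $a\in\overline{C(f,e)}$. Chaining the two implications gives the claimed inclusion. The only hypothesis to keep track of is that Theorem~\ref{thm:niceCones} is stated for square-free $f$; if $f$ is not square-free, one factors $f=f_1\cdot f_2$ with $f_2$ square-free and $\mathcal{V}(f)=\mathcal{V}(f_2)$, noting that $\overline{C(f,e)}=\overline{C(f_2,e)}$, and reduces to the square-free case via Corollary~\ref{cor:InterlacersNon}.

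There is no real obstacle in the proof itself. What is worth emphasizing in the write-up is rather the utility of the relaxation: since $\Delta_{e,a}f=D_ef\cdot D_af-f\cdot D_eD_af$ depends linearly on $a$ (with $e$ fixed), and sum-of-squares decompositions in $\R[\varx]_{2d-2}$ are parametrized by positive semidefinite Gram matrices, the set on the left-hand side of \eqref{eq:sosRelax} is a projected spectrahedron. The corollary therefore produces a semidefinite-programming representable inner approximation of $\overline{C(f,e)}$, setting up the discussion of when this relaxation is exact in the later sections.
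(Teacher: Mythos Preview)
Your proposal is correct and matches the paper's approach exactly: the paper gives no separate proof at all (the \qed is placed at the end of the statement), treating the inclusion as immediate from \eqref{eq:RenCone} together with the trivial fact that sums of squares are nonnegative. Your digression on the non-square-free case is unnecessary, since the corollary inherits the square-free hypothesis from Theorem~\ref{thm:niceCones}; the reduction you sketch via Corollary~\ref{cor:InterlacersNon} would need more work to actually relate $\Delta_{e,a}f$ to $\Delta_{e,a}f_2$, but the paper never claims the corollary beyond the square-free setting, so you can simply drop that paragraph.
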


If the relaxation \eqref{eq:sosRelax} is exact, then the hyperbolicity
cone is a projection of a spectrahedron, namely of a section of the
cone of positive semidefinite matrices in ${\rm Sym}_N(\R)$,
where $N = \binom{n+d-2}{n-1} = \dim\R[x]_{d-1}$.  A polynomial $F$ is
a sum of squares if and only if there exists a positive semidefinite
matrix $G$ such that $F = v^TGv$, where $v$ is the vector of monomials
of degree at most $\deg(F)/2$. We call such a matrix $G$ a \emph{Gram
  matrix} of $F$.  The linear equations giving the Gram matrices of
$\Delta_{e,a}f$ give the desired section of ${\rm Sym}_N(\R)$.

If the relaxation \eqref{eq:sosRelax} is not exact, one can allow for
denominators in the sums of squares and successively improve the
relaxation. More precisely, for any integer $N\ge 0$ consider 
\begin{equation}
  \label{eq:sosRelaxDenom}
C_N=\bigl\{a \in \R^{n}\;:\; \bigl(\sum_{i=1}\nolimits^n\! x_i^2\bigr)^N\cdot \Delta_{e,a}f\ \text{ is a sum of squares } \bigr\} \;\; \subseteq \;\; \overline{C(f,e)}.
\end{equation}
As above, $C_N$ is seen to be a projection of a
spectrahedron. Furthermore, by a result of Reznick in \cite{Rez}, for
any positive definite form $F\in\R[x]$ there exists some positive
integer $N$ such that $(\sum_{i=1}^n x_i^2)^N\cdot F$ is a sum of
squares. Thus if $\sV_\R(f)$ is smooth, then $\{\Delta_{e,a}f\:|\:
a\in\R^n\}$ contains a strictly positive polynomial, for example $\Delta_{e,e}f$. 
It follows that the hyperbolicity cone $\overline{C(f,e)}$ is the closure of the
union of all the cones $C_N$. 

\begin{Remark}
  In a recent paper \cite{NS}, Netzer and Sanyal showed that the
  hyperbolicity cone of a hyperbolic polynomial without real
  singularities is the projection of a spectrahedron. Their proof uses
  general results on projected spectrahedra due to Helton and Nie and
  is not fully constructive. In particular, it does not imply anything
  about equality in \eqref{eq:sosRelax} or \eqref{eq:sosRelaxDenom}.

  Explicit representations of hyperbolicity cones as projected
  spectrahedra have recently been obtained by Parrilo and Saunderson in
  \cite{PS} for elementary symmetric polynomials and for directional
  derivatives of polynomials possessing a definite determinantal
  representation.
\end{Remark}

\begin{Remark} We also have the relaxation 
\begin{equation}\label{eq:sosmodIRelax}
 \{a \in \R^{n}\;:\; D_e f \cdot D_a f \ \text{ is a sum of squares modulo }( f ) \} \;\; \subseteq \;\; \overline{C(f,e)}.
 \end{equation}
It is unclear whether or not this relaxation is always 
equal to \eqref{eq:sosRelax}. Its exactness would also show $\overline{C(f,e)}$ to be the 
projection of a spectrahedron. We will see below that if $f$ has a definite determinantal representation, 
then we get equality in \eqref{eq:sosRelax} and \eqref{eq:sosmodIRelax}.
\end{Remark}

\begin{Example}
 Consider the quadratic form $f(x) = x_1^2 - x_2^2-\cdots-x_n^2$, which is hyperbolic
with respect to the point $e=(1,0,\hdots, 0)$.  The hyperbolicity cone $C(f,e)$ is known as the Lorentz cone. 
In this example, the relaxation \eqref{eq:sosRelax} is exact. To see this, note that
\begin{align*}
\Delta_{e,a}f \;\;&= \;\;(2x_1)(2a_1x_1-\sum_{j\neq1}2a_jx_j) - (x_1^2-\sum_{j\neq 1} x_j^2)(2a_1) \\
&=\;\;2(a_1x_1^2-2\sum_{j\neq 1} a_jx_1x_j +\sum_{j\neq 1}a_1x_j^2).
\end{align*}

Since every nonnegative quadratic form is a sum of squares, there is equality in \eqref{eq:sosRelax}.
In fact, taking the Gram matrix of $\frac{1}{2}\Delta_{e,a}f$, we recover the Lorentz cone as
\[ \overline{C(f,e)} \;\; =\;\;  \left\{a \in \R^{n}\;:\; 
\begin{pmatrix} 
a_1 	&-a_2	& \hdots 	& -a_n \\
-a_2 	&a_1	&		& 0	\\	
\vdots&		&\ddots	&\vdots\\
-a_n	& 0		&\hdots	&a_1
  \end{pmatrix} \succeq 0 \right\}.\]
  Note also that this Gram matrix gives a definite determinantal representation of $a_1^{n-2}f(a)$. 
\end{Example}

\begin{Example}\label{ex:cubic}
Consider the hyperbolic cubic polynomial 
\[f= (x - y) (x + y) (x + 2 y) - x z^2,\]
with $e=[1:0:0]$. Here the polynomial 
$\Delta_{e,a}f$ has degree four in $x,y,z$. 
In this case, the relaxation \eqref{eq:sosRelax} is exact, as shown in 
Corollary~\ref{cor:good3var}  below. 
(One can also see exactness from the fact that every nonnegative 
ternary quartic is a sum of squares). 
Using the basis $(x^2, y^2, z^2, x y, x z, y z)$ of $\R[x,y,z]_2$, we can then write
the cone $\ol{C(f,e)}$ as the set of $(a,b,c)$ in $\R^3$ for which
 there exists $(g_1, \hdots, g_6)\in \R^6$ to make the real symmetric matrix
\[ \begin{pmatrix}
 3 a + 2 b& g_1& g_2& 4 a -2 b& -2 c& g_3\\ 
 g_1& 9 a + 2 b& g_4& 4a - 8 b& g_5& -2 c\\ 
 g_2& g_4& a& g_6& 0& 0\\ 
4 a - 2 b& 4a - 8 b& g_6& 8 a - 20 b - 2 g_1& -2 c - g_3& -g_5\\
 -2 c& g_5& 0& -2 c - g_3& 2 b - 2 g_2& -2 a - g_6\\ 
 g_3& -2 c& 0& -g_5& -2 a - g_6& 2a + 6 b - 2 g_4
\end{pmatrix}\]
positive semidefinite. 
\end{Example}

The sums of squares relaxation  \eqref{eq:sosRelax} is not always exact. A counterexample comes from a 
multilinear hyperbolic polynomial and will be discussed in Example~\ref{ex:Vamos}.

\section{Definite Symmetric Determinants}\label{sec:HermDets}  
We consider $\det(X)$ as a polynomial in $\R[X_{ij}:i\leq j \in [d]]$, 
where $X = (X_{ij})$ is a symmetric matrix of variables.
Since all eigenvalues of a real symmetric matrix are real, this polynomial is 
hyperbolic with respect to the identity matrix.
The hyperbolicity cone $C(\det(X),I)$ is the cone of positive definite matrices. 
Hence, for any positive semidefinite matrix $E\neq 0$, the polynomial 
\begin{equation}\label{eq:RenMatrix}
D_E(\det(X)) \;\; = \;\; \trace\!\left(E\cdot X^{{\rm adj}}\right)
\end{equation}
interlaces $\det(X)$, where $X^{\rm adj}$ denotes the adjugate matrix,
whose entries are the signed $(d-1)\times (d-1)$-minors of $X$. This
holds true when we restrict to linear subspaces.  For real symmetric
$d\times d$ matrices $M_1, \hdots, M_n$ and variables $x=(x_1,\hdots,
x_n)$, denote
\[ M(\varx) \;=\; \sum_{j=1}^n x_j M_j.\]
If $M(e)$ is positive definite for some $e\in \R^{n}$, 
then the polynomial $ \det(M(x))$ is hyperbolic with respect to the point $e$. 

\begin{Prop}\label{prop:DetHyperbolic}
If $M$ is a real symmetric matrix of linear forms such that $M(e) \succ 0$ for some $e\in \R^{n}$, 
then for any positive semidefinite matrix $E$, the polynomial  $\trace\!\left(E\cdot M^{{\rm adj}}\right)$ 
interlaces $\det(M)$ with respect to $e$. 
\end{Prop}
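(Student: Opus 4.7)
The strategy is to view $\det(X)$ as a hyperbolic polynomial on the ambient space ${\rm Sym}_d(\R)$ of real symmetric matrices, and to deduce the proposition by pulling back a known interlacing statement on ${\rm Sym}_d(\R)$ along the linear map $x \mapsto M(x)$.

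First, note that on ${\rm Sym}_d(\R)$ the polynomial $\det(X)$ is square-free, hyperbolic with respect to $M(e)$ (which is positive definite, hence lies in $C(\det,I)$), and its closed hyperbolicity cone with respect to $M(e)$ is the cone of positive semidefinite matrices. Since $E$ is positive semidefinite, $E \in \overline{C(\det,M(e))}$. By Theorem~\ref{thm:niceCones} applied to $\det$ with reference point $M(e)$, together with identity~\eqref{eq:RenMatrix}, we obtain $D_E\det(X) = \trace(E \cdot X^{\rm adj}) \in {\rm Int}(\det, M(e))$. Unpacking the definition, this says that for every $A \in {\rm Sym}_d(\R)$, the univariate polynomial $\trace(E \cdot (tM(e) + A)^{\rm adj})$ interlaces $\det(tM(e) + A)$ in $t$.

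Next, I would pull this back along $M$. By linearity, $M(te + a) = tM(e) + M(a)$ for every $a \in \R^n$, so specialising $A = M(a)$ in the previous display gives that $\trace(E \cdot M(te+a)^{\rm adj})$ interlaces $\det(M(te+a))$ as polynomials in $t$ for all $a \in \R^n$. This is precisely the statement that $\trace(E \cdot M^{\rm adj})$ interlaces $\det(M)$ with respect to $e$. The positivity-at-$e$ condition is also met: $M(e) \succ 0$ forces $M(e)^{\rm adj} = \det(M(e))\cdot M(e)^{-1} \succ 0$, so $\trace(E \cdot M(e)^{\rm adj}) \ge 0$, strictly when $E \neq 0$.

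The main subtlety, rather than a serious obstacle, is keeping track of the interlacing direction under the pullback: the ambient reference point must be $M(e)$ (not $I$), which is exactly where the hypothesis $M(e) \succ 0$ enters. Once this shift is made, the rest is formal, since the linear pullback $x \mapsto M(x)$ sends the line $te+a$ in $\R^n$ to the line $tM(e)+M(a)$ in ${\rm Sym}_d(\R)$, along which the ambient interlacing already holds.
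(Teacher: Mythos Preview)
Your proof is correct and follows essentially the same strategy as the paper: establish that $\trace(E\cdot X^{\rm adj})$ interlaces $\det(X)$ on the ambient space ${\rm Sym}_d(\R)$ with respect to $M(e)$, then restrict to the linear subspace $\{M(x):x\in\R^n\}$. The only cosmetic difference is that you invoke Theorem~\ref{thm:niceCones} directly at the base point $M(e)$, whereas the paper first obtains interlacing with respect to $E$ (via the discussion preceding the proposition) and then uses Theorem~\ref{thm:Interlacers} to shift the reference point to $M(e)$; since ${\rm Int}(\det,\,\cdot\,)$ is independent of the choice of point in the hyperbolicity cone, these are equivalent routings of the same argument.
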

\begin{proof}
By the discussion above, the polynomial $D_E(\det(X)) =\trace\!\left(E\cdot X^{{\rm adj}}\right)$ interlaces $\det(X)$ 
with respect to $E$. (In fact these are all of the interlacers of $\det(X)$. See Example~\ref{ex:SDP} below.)
By Theorem~\ref{thm:Interlacers}, $\trace\!\left(E\cdot X^{{\rm adj}}\right)$ interlaces $\det(X)$ with respect to 
any positive definite matrix, in particular $M(e)$. Restricting to the linear space $\{M(x) : x\in\R^{n}\}$ shows that 
$\trace\!\left(E\cdot M^{{\rm adj}}\right)$ interlaces $\det(M)$ with respect to $e$. 
\end{proof}

\begin{Thm} \label{thm:goodSOSrelax}
If $f\in \R[\varx]_d$ has a definite symmetric determinantal representation $f=\det(M)$ 
with $M(e)\succeq 0$ and $M(a)\succeq 0$, then 
$\Delta_{e,a}f$ is a sum of squares. 
In particular, there is equality in \eqref{eq:sosRelax}.
\end{Thm}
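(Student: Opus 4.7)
The plan is to compute $\Delta_{e,a}f$ explicitly in terms of the adjugate of $M$ and then recognize the resulting expression as a Frobenius-norm-squared of a matrix of polynomials.

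First I would show the universal identity
\[
\Delta_{E,A}\det(X) \;=\; \trace\bigl(E\cdot X^{\mathrm{adj}}\cdot A\cdot X^{\mathrm{adj}}\bigr)
\]
for the generic symmetric matrix of variables $X$ and arbitrary symmetric $E,A$. The computation is standard: for invertible $X$ we have $D_E\det(X)=\det(X)\trace(X^{-1}E)$ and, by differentiating $XX^{\mathrm{adj}}=\det(X)I$ in the direction $A$, one obtains $D_A(X^{\mathrm{adj}})=\trace(AX^{\mathrm{adj}})X^{-1}-X^{-1}AX^{\mathrm{adj}}$; combining these and multiplying through by $\det(X)$ so as to replace $X^{-1}$ by $X^{\mathrm{adj}}$ makes the $\trace(EX^{\mathrm{adj}})\trace(AX^{\mathrm{adj}})$ terms cancel, leaving $\trace(EX^{\mathrm{adj}}AX^{\mathrm{adj}})$. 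Since both sides are polynomial in the entries of $X$, the identity extends from the open set $\{\det(X)\neq 0\}$ to all symmetric $X$.

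Next I would substitute $X=M(x)$. By the chain rule applied to the linear map $x\mapsto M(x)$, we have $D_e f(x)=\trace(M(e)\cdot M(x)^{\mathrm{adj}})$ and $D_eD_a f(x)$ is the second derivative of $\det$ at $M(x)$ in the directions $M(e)$ and $M(a)$, so
\[
\Delta_{e,a}f(x)\;=\;\bigl[\Delta_{M(e),M(a)}\det(X)\bigr]\bigr|_{X=M(x)}
\;=\;\trace\bigl(M(e)\cdot M(x)^{\mathrm{adj}}\cdot M(a)\cdot M(x)^{\mathrm{adj}}\bigr).
\]
Now use that $M(e)\succeq 0$ and $M(a)\succeq 0$ to write $M(e)=B^TB$ and $M(a)=C^TC$ for some real matrices $B,C$. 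Since $M(x)$ is symmetric, so is $M(x)^{\mathrm{adj}}$, and a cyclic rearrangement inside the trace gives
\[
\Delta_{e,a}f(x)\;=\;\trace\bigl((BM(x)^{\mathrm{adj}}C^T)^T\,(BM(x)^{\mathrm{adj}}C^T)\bigr)\;=\;\sum_{i,j}\bigl(BM(x)^{\mathrm{adj}}C^T\bigr)_{ij}^{2},
\]
which is manifestly a sum of squares of polynomials in $x$.

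Finally, for the equality in \eqref{eq:sosRelax}: the containment $\subseteq$ is already established, so only the reverse is needed. If $a\in\overline{C(f,e)}$ then $M(a)\succeq 0$ (this is the definition of the hyperbolicity cone when $f$ admits the definite representation $\det(M)$ with $M(e)\succ 0$), so the computation above applies and shows $\Delta_{e,a}f$ is a sum of squares; hence $a$ lies in the left-hand side of \eqref{eq:sosRelax}. The main technical nuisance is carrying out the first-step derivative-of-adjugate calculation cleanly so that the miraculous cancellation producing $\trace(EX^{\mathrm{adj}}AX^{\mathrm{adj}})$ is transparent; once that identity is in hand, the positivity of $M(e)$ and $M(a)$ does the rest via a single factorization.
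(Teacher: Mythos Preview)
Your argument is correct and yields exactly the same sum-of-squares decomposition as the paper: if one takes $B$ and $C$ with rows $\lambda_i^T$ and $\mu_j^T$ coming from rank-one decompositions $M(e)=\sum_i\lambda_i\lambda_i^T$ and $M(a)=\sum_j\mu_j\mu_j^T$, then the $(i,j)$-entry of $B\,M^{\mathrm{adj}}C^T$ is $\lambda_i^T M^{\mathrm{adj}}\mu_j$, which is precisely the paper's summand. The difference lies in how the underlying identity is reached. The paper works term by term on rank-one pieces and invokes a bordered-determinant identity (Proposition~\ref{prop:HesseDet}, proved via Schur complements) to show that $(\lambda_i^T M^{\mathrm{adj}}\lambda_i)(\mu_j^T M^{\mathrm{adj}}\mu_j)-\det(M)\cdot u_{ij}=(\lambda_i^T M^{\mathrm{adj}}\mu_j)^2$. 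You instead establish the clean trace identity $\Delta_{E,A}\det(X)=\trace(E\,X^{\mathrm{adj}}A\,X^{\mathrm{adj}})$ in one stroke by differentiating the adjugate, and then read off the SOS as a Frobenius norm. Your route is more self-contained and bypasses Proposition~\ref{prop:HesseDet} entirely; the paper's route, on the other hand, isolates the scalar identity $(\lambda^T M^{\mathrm{adj}}\lambda)(\mu^T M^{\mathrm{adj}}\mu)-\det(M)\cdot D_{\mu\mu^T}D_{\lambda\lambda^T}\det(M)=(\lambda^T M^{\mathrm{adj}}\mu)^2$, which it reuses later (e.g.\ in the proof of Theorem~\ref{thm:muaff}). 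Your justification of the equality in \eqref{eq:sosRelax} via $\overline{C(f,e)}=\{a:M(a)\succeq 0\}$ is also fine, since $M(e)\succ 0$ forces the closed hyperbolicity cone to coincide with the spectrahedron cut out by $M$.
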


\begin{proof}
 Because $M(e)$ and $M(a)$ are positive semidefinite, we can
  write them as sums of rank-one matrices: $M(e) = \sum_i \lambda_i
  \lambda_i^T$ and $M(a) = \sum_j \mu_j \mu_j^T$, where $\lambda_i,
  \mu_j \in \R^d$.  Then $D_e f = \langle M(e), M^{\rm adj}
  \rangle=\langle \sum_i \lambda_i \lambda_i^T, M^{\rm adj} \rangle =
  \sum_i \lambda_i^T M^{\rm adj}\lambda_i$, so
\[ D_e f = \sum_i \lambda_i^T M^{\rm adj}\lambda_i \;\;\;\text{ and, similarly, }\;\;\; D_a f = \sum_j \mu_j^T M^{\rm adj}\mu_j.\] 
Furthermore, by Proposition~\ref{prop:HesseDet} below, the second derivative $D_aD_bf$ is
\[D_eD_af \; =D_e\left(\sum_j \mu_j^T M^{\rm adj}\mu_j\right) =  \; \sum_{i,j} u_{ij} \;\;\; \text{where} \;\;\;
u_{ij} = \left| 
\begin{matrix}
M & \lambda_i & \mu_j \\
\lambda_i^T & 0 & 0 \\
\mu_j^T & 0 & 0
\end{matrix}
  \right|.   \]
Now, again using Proposition~\ref{prop:HesseDet}, we see that $\Delta_{e,a}f $ equals
\begin{equation}\label{eq:HesseId}
\sum_{i,j} \biggl(( \lambda_i^T M^{\rm adj}\lambda_i )(\mu_j^T M^{\rm adj}\mu_j) - \det(M)\cdot  u_{ij}\biggl)
\;\;=\;\; \sum_{i,j} (\lambda_i^T M^{\rm adj}\mu_j)^2,
\end{equation}
which is the desired sum of squares.
\end{proof}

In fact, something stronger is true.  We can also consider the case where some power of $f$ has a definite determinantal representation. 
This is particularly interesting because taking powers of a hyperbolic polynomial does not change the hyperbolicity cone. 

\begin{Cor}\label{cor:power}
If $f\in\R[\varx]_d$ and a power $f^r$ has a definite symmetric determinantal representation $f^r=\det(M)$
with $M(e),M(a)\succeq 0$, then
$\Delta_{e,a}(f)$ is a sum of squares. 
In particular, there is equality in \eqref{eq:sosRelax}.
\end{Cor}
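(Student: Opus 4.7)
The plan is to apply Theorem~\ref{thm:goodSOSrelax} with $F=f^r$ in place of $f$, and then transfer the resulting sum-of-squares decomposition of $\Delta_{e,a}(F)$ down to one for $\Delta_{e,a}(f)$ via a polynomial identity and a divisibility claim. A direct chain-rule calculation, starting from $D_e(f^r)=rf^{r-1}D_ef$ and $D_eD_a(f^r)=r(r-1)f^{r-2}D_ef\cdot D_af+rf^{r-1}D_eD_af$, yields the key identity
\[
\Delta_{e,a}(f^r) \;=\; r\, f^{2r-2}\, \Delta_{e,a}(f),
\]
while Theorem~\ref{thm:goodSOSrelax} applied to $f^r=\det(M)$ produces the explicit decomposition
\[
\Delta_{e,a}(f^r) \;=\; \sum_{i,j}\bigl(\lambda_i^T M^{\mathrm{adj}}\mu_j\bigr)^2,
\]
where $M(e)=\sum_i\lambda_i\lambda_i^T$ and $M(a)=\sum_j\mu_j\mu_j^T$.

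The crucial step is to prove that $f^{r-1}$ divides each entry of $M^{\mathrm{adj}}$ as a polynomial. Granted this, each $\lambda_i^T M^{\mathrm{adj}}\mu_j$ factors as $f^{r-1}h_{ij}$ for some $h_{ij}\in\R[\varx]$, and cancelling the common factor $f^{2r-2}$ from both sides of the identities above gives
\[
r\,\Delta_{e,a}(f) \;=\; \sum_{i,j} h_{ij}^2,
\]
which is the desired sum-of-squares decomposition after absorbing $1/\sqrt{r}$ into each summand. Equality in \eqref{eq:sosRelax} then follows because for every $a\in\ol{C(f,e)}=\ol{C(f^r,e)}$ one automatically has $M(a)\succeq 0$ (the hyperbolicity cone of a definite representation coincides with $\{a:M(a)\succeq 0\}$), so the first part of the corollary applies to every such $a$.

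The main obstacle is the divisibility $f^{r-1}\mid M^{\mathrm{adj}}$. I would establish it by analyzing the rank of $M$ at generic points of $V(f)$. Choosing a point $e_0$ with $M(e_0)\succ 0$, diagonalizing $M(e_0)^{-1/2}M(x)M(e_0)^{-1/2}$ shows that
\[
\det M(te_0+x) \;=\; f(te_0+x)^r
\]
forces the real eigenvalues to cluster into groups of size exactly $r$ at each root of $f(te_0+x)$; hence the kernel of $M$ at the generic point of each irreducible component of $V(f)$ has dimension exactly $r$. Localizing at such a point and applying the Smith normal form over the resulting DVR shows that the elementary divisors of $M$ there are all either $0$ or $1$, so the adjugate has smallest elementary divisor $f^{r-1}$, which means every $(d-1)\times(d-1)$ minor of $M$, i.e., every entry of $M^{\mathrm{adj}}$, is divisible by $f^{r-1}$, as required.
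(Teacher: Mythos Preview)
Your argument is correct but takes a genuinely different route from the paper. After establishing the same identity $\Delta_{e,a}(f^r)=rf^{2(r-1)}\Delta_{e,a}f$ and invoking Theorem~\ref{thm:goodSOSrelax}, the paper does not touch $M^{\mathrm{adj}}$ at all. It simply writes $\Delta_{e,a}(f^r)=\sum_k g_k^2$ for \emph{some} $g_k$ and then peels off irreducible factors of $f$ one at a time: for each irreducible factor $p$ of $f$, the sum $\sum_k g_k^2$ vanishes on $\sV_\R(p)$, hence so does each $g_k$; since $p$ is hyperbolic, $\sV_\R(p)$ is Zariski dense in $\sV_\C(p)$, so $p\mid g_k$. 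Iterating removes $f^{r-1}$ from each $g_k$.

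Your route through the generic corank of $M$ on $\sV(f)$ and Smith normal form over the local DVR is heavier machinery but yields a stronger structural statement: $f^{r-1}$ divides $M^{\mathrm{adj}}$ entrywise, a fact about the determinantal representation itself rather than just about one particular sum-of-squares decomposition. Two small points: your claim that the generic kernel has dimension \emph{exactly} $r$ assumes $f$ is square-free; if an irreducible factor $p$ occurs in $f$ with multiplicity $k$, your eigenvalue-clustering argument gives generic corank $kr$ along $\sV(p)$, and the Smith computation then yields $p^{kr-1}\mid M^{\mathrm{adj}}$, which is still at least $p^{k(r-1)}$ and hence suffices. Second, the passage from ``corank $r$ at generic \emph{real} points of $\sV(f)$'' to the statement over the DVR at the generic point of a component of $\sV_\C(f)$ implicitly uses the Zariski density of $\sV_\R(p)$ in $\sV_\C(p)$ for hyperbolic irreducible $p$---exactly the ingredient the paper invokes explicitly---so the two proofs ultimately share that step.
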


\begin{proof}
Let $f^r$ have a definite determinantal representation. 
 We have $\Delta_{e,a}(f^r)=r f^{2(r-1)} \Delta_{e,a}f.$
 Theorem \ref{thm:goodSOSrelax} states that $\Delta_{e,a}(f^r)$ is a sum of squares,
 \[g_1^2+\cdots+g_s^2 \;=\; r f^{2(r-1)} \Delta_{e,a}f\] for some $g_i \in \R[x]$.
 Let $p$ be an irreducible factor of $f^{2(r-1)}$. 
 Then $p$ is hyperbolic with respect to $e$ and the right hand side vanishes on $\mathcal{V}_{\C}(p)$. 
 Therefore, each $g_i$ vanishes on $\sV_\R(p)$ and thus on $\sV_\C(p)$, 
 since $\sV_\R(p)$ is Zariski dense in $\sV_\C(p)$.
 Thus we can divide the $g_i$ by $p$. By iterating this argument, we get the claim.
\end{proof}

\begin{Remark}
  This result is closely related to (but does not seem to follow from)
  \cite[Thm.~1.6]{NPT}, which says that the parametrized Hermite
  matrix of $f$ is a sum of matrix squares whenever a power of $f$
  possesses a definite determinantal representation.
\end{Remark}

\begin{Cor} \label{cor:good3var}
If $f\in \R[x_1, x_2, x_3]$, then there is equality in \eqref{eq:sosRelax}.
\end{Cor}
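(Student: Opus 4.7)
The plan is to reduce this to Corollary~\ref{cor:power} (cor:power) by invoking the Helton--Vinnikov theorem cited in the introduction. Recall that \cite{HV} asserts that every hyperbolic polynomial in three variables admits a definite symmetric determinantal representation (in full generality, the statement applies to some power $f^r$; for smooth $f$ one has $r=1$). So my first step is to produce real symmetric matrices $M_1,M_2,M_3$ and an integer $r\ge1$ with $f^r=\det(M(x))$, where $M(x)=x_1M_1+x_2M_2+x_3M_3$, and $M(e)\succ 0$.

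Next I need to check that $M(a)\succeq 0$ for every $a\in\overline{C(f,e)}$, which is exactly the hypothesis required by Corollary~\ref{cor:power}. This is a standard hyperbolicity-cone computation: the hyperbolicity cone of $\det(M)$ at $e$ is $\{a:M(a)\succ 0\}$, since $M(e)\succ 0$ implies that along any line $te+a$ the characteristic polynomial has only real roots and $t=0$ gives a positive definite matrix exactly when $a$ is in this cone. Because $f$ and $f^r$ have the same real zero set and hence the same hyperbolicity cone at $e$, I conclude $C(f,e)=\{a:M(a)\succ 0\}$ and, passing to closures, $\overline{C(f,e)}=\{a:M(a)\succeq 0\}$.

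With both $M(e)$ and $M(a)$ positive semidefinite, Corollary~\ref{cor:power} applies and yields that $\Delta_{e,a}f$ is a sum of squares for every $a\in\overline{C(f,e)}$. Combined with the reverse inclusion \eqref{eq:sosRelax}, this gives the desired equality.

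The real work is entirely imported from \cite{HV}; once that theorem is in hand, the argument is bookkeeping. The only point demanding any care is the passage from a determinantal representation of $f^r$ to the matrix-theoretic description of $\overline{C(f,e)}$ — in particular, verifying that the hyperbolicity cone is preserved under taking powers and that the closure of the strict positivity locus of $M$ is the semidefinite locus, the latter being a consequence of $M(e)\succ 0$.
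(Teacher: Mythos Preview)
Your proposal is correct and follows essentially the same route as the paper: invoke the Helton--Vinnikov theorem and then apply the earlier determinantal result. The paper's proof is slightly shorter because it cites Helton--Vinnikov as giving a definite determinantal representation of $f$ itself (not merely of some power $f^r$), so it applies Theorem~\ref{thm:goodSOSrelax} directly rather than going through Corollary~\ref{cor:power}; your extra verification that $\overline{C(f,e)}=\{a:M(a)\succeq 0\}$ is implicit in the ``In particular'' clause of those results.
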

\begin{proof}
By the Helton-Vinnikov Theorem \cite{HV}, every hyperbolic polynomial in 
three variables has a definite determinantal representation. The claim then follows from Theorem~\ref{thm:goodSOSrelax}.
\end{proof}

\noindent The following
determinantal identities were needed in the proof of
Theorem~\ref{thm:goodSOSrelax} above.

\begin{Prop}\label{prop:HesseDet}
 Let $X$ be a $d\times d$ matrix of variables $X_{ij}$ and let $|\cdot|$ denote $\det(\cdot)$.
Then for any vectors $\alpha, \beta, \gamma, \delta \in \C^d$ we have
\begin{equation}\label{eq:bigHesse}
\left|\begin{matrix}X & \beta \\ \alpha^T & 0 \end{matrix}\right | \cdot \left|\begin{matrix}X & \delta \\ \gamma^T & 0 \end{matrix}\right |
-\left|\begin{matrix}X & \delta \\ \alpha^T & 0 \end{matrix}\right | \cdot \left|\begin{matrix}X & \beta \\ \gamma^T & 0 \end{matrix}\right |
\;\;=\;\; |X| \cdot \left|\begin{matrix}X & \beta &\delta  \\ \alpha^T & 0 & 0 \\ \gamma^T &0 & 0 \end{matrix}\right |
\end{equation}
in $\C[X_{ij}:\; 1\leq i,j\leq d]$. Furthermore, 
 \[ D_{\beta \alpha^T}|X| = \left|\begin{matrix}X & \beta \\ \alpha^T & 0 \end{matrix}\right | \;\;\;\;\; \text{and}\;\;\;\;\;
D_{\delta\gamma^T } D_{ \beta \alpha^T }|X| = \left|\begin{matrix}X & \beta &\delta  \\ \alpha^T & 0 & 0 \\ \gamma^T &0 & 0 \end{matrix}\right |.
 \]
\end{Prop}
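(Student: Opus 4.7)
My plan is to prove the two derivative identities first, using Jacobi's formula combined with a Laplace expansion, and then deduce \eqref{eq:bigHesse} from a Schur-complement calculation on the dense open locus where $X$ is invertible. For the first derivative formula, the starting point is Jacobi's formula $D_E|X| = \trace(E \cdot X^{\mathrm{adj}})$, which specializes at $E = \beta\alpha^T$ to $\alpha^T X^{\mathrm{adj}}\beta$. I match this against the bordered determinant by expanding along the bottom row: the coefficient of $\alpha_j$ is a signed cofactor of $X$ obtained by replacing its $j$-th column with $\beta$, and assembling these terms reproduces $\alpha^T X^{\mathrm{adj}}\beta$. The second-derivative identity then follows by iteration: regarding $M' = \bigl(\begin{smallmatrix} X & \beta \\ \alpha^T & 0 \end{smallmatrix}\bigr)$ as a $(d+1)\times(d+1)$ matrix and perturbing in the direction of the rank-one matrix $\widetilde\delta\widetilde\gamma^T$ with $\widetilde\delta = (\delta,0)^T$ and $\widetilde\gamma = (\gamma,0)^T$, the first identity applied to $M'$ produces the $(d+2)\times(d+2)$ bordered determinant on the right-hand side, while the chain rule identifies the result as $D_{\delta\gamma^T}D_{\beta\alpha^T}|X|$, since this perturbation touches only the top-left block of $M'$.

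For \eqref{eq:bigHesse}, I will reduce to the case where $X$ is invertible, since both sides are polynomials in the entries of $X$. On that locus, Schur complement with top-left block $X$ gives
\[
\left|\begin{matrix} X & \beta \\ \alpha^T & 0 \end{matrix}\right| \;=\; -\det(X)\cdot \alpha^T X^{-1}\beta,
\]
with analogous expressions for the other three bordered $(d+1)\times(d+1)$ minors on the left-hand side. Similarly, Schur complement applied to the matrix on the right-hand side, with top-left block $X$ and border blocks $B = (\beta,\delta)$ and $C = (\alpha,\gamma)^T$, gives
\[
\left|\begin{matrix} X & \beta & \delta \\ \alpha^T & 0 & 0 \\ \gamma^T & 0 & 0 \end{matrix}\right| \;=\; \det(X)\cdot \det(-CX^{-1}B) \;=\; \det(X)\bigl[(\alpha^T X^{-1}\beta)(\gamma^T X^{-1}\delta) - (\alpha^T X^{-1}\delta)(\gamma^T X^{-1}\beta)\bigr].
\]
Substituting these expressions reduces both sides of \eqref{eq:bigHesse} to $\det(X)^2$ times the same $2\times 2$ bilinear determinant, so the identity holds on the invertibility locus and extends to all $X$ by polynomiality.

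The only technical subtleties are the sign bookkeeping and the fact that Schur complement requires the top-left block to be invertible; the latter is resolved by density, and the former is absorbed into the conventions for the bordered determinants. Structurally, \eqref{eq:bigHesse} is an instance of the classical Desnanot--Jacobi (Sylvester / Lewis Carroll) identity applied to the $(d+2)\times(d+2)$ matrix whose lower-right $2\times 2$ block vanishes, and this provides an alternative, purely combinatorial derivation for readers who prefer it.
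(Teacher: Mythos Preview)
Your approach is essentially the same as the paper's: both prove \eqref{eq:bigHesse} via a Schur-complement computation (the paper writes it with $X^{\rm adj}$ rather than $X^{-1}$, but this is the same formula after clearing denominators), and both obtain the derivative formulas by identifying directional derivatives of $\det$ with cofactors and reading these off as bordered determinants. The only cosmetic difference is that the paper handles the second derivative by reducing to unit vectors and computing $\partial^2|X|/\partial X_{ji}\partial X_{lk}$ directly, whereas you iterate the first-derivative identity on the $(d{+}1)\times(d{+}1)$ bordered matrix; these are equivalent, and your remark about sign bookkeeping applies equally to both arguments.
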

\begin{proof}
We will prove the first identity using Schur complements. See, for example, \cite[\S 1]{BS}.
If $A$ is a $m\times m$ submatrix of the $n\times n$ matrix $\begin{pmatrix}A & C \\ B & D \end{pmatrix}$, 
then its determinant equals $|A| \cdot |D-BA^{-1}C|$.
If $D$ is the zero matrix, this simplifies to
\[
\left|\begin{matrix}A & C \\ B & 0 \end{matrix}\right| 
  =  |A|\cdot \left| \frac{-1}{|A|}\cdot BA^{\rm adj}C  \right|
  =  |A| \cdot \left(\frac{-1}{|A|}\right)^{n-m} \cdot  | BA^{\rm adj}C|.
\]
To obtain the desired identity, we set  
$A=X$, $B = \begin{pmatrix} \alpha^T \\ \gamma^T \end{pmatrix}$, and $C = \begin{pmatrix} \beta & \delta \end{pmatrix}$:
\[
\left|\begin{matrix}X & \beta &\delta  \\ \alpha^T & 0 & 0 \\ \gamma^T &0 & 0 \end{matrix} \right|
 = |X| \cdot \left(\frac{-1}{|X|}\right)^{2}  \cdot \left|\begin{pmatrix} \alpha^T \\ \gamma^T \end{pmatrix}  X^{\rm adj}  \begin{pmatrix} \beta & \delta \end{pmatrix}\right|\\
 = \frac{1}{|X|} \cdot\left|\begin{matrix} \alpha^T X^{\rm adj} \beta & \alpha^T X^{\rm adj} \delta \\ \gamma^T X^{\rm adj} \beta & \gamma^T X^{\rm adj} \delta  \end{matrix}\right|.
\]
Multiplying both sides by $\det(X)$ finishes the proof of the determinantal identity. 

For the claim about derivatives of the determinant, by additivity, we only need to look at the case when $\alpha, \beta, \gamma, \delta$ are unit vectors, 
$e_i, e_j, e_k, e_l$, respectively. 
Then $D_{\beta \alpha^T}|X| = D_{e_j e_i^T}|X|$ is the derivative of $|X|$ with respect to the entry $X_{ji}$. This is the signed minor 
of $X$ obtained by removing the $j$th row and $i$th column, which is precisely the determinant $\left|\begin{matrix}X & e_j \\ e_i^T & 0 \end{matrix}\right |$.
Taking the derivative of this determinant with respect to $X_{lk}$ the same way gives 
\[\frac{\partial^2 |X|}{\partial X_{ji} \partial X_{lk}} \;=\;
D_{ e_l e_k^T} D_{e_je_i^T }|X| \;=\; 
D_{ e_le_k^T} \left|\begin{matrix}X & e_j \\ e_i^T & 0 \end{matrix}\right | \; = \;
\left|\begin{matrix}X & e_j &e_l  \\ e_i^T & 0 & 0 \\ e_k^T &0 &
    0 \end{matrix}\right |.\qedhere
\]
\end{proof}

We conclude this section with a general result inspired by Dixon's
construction of determinantal representations of plane curves, which
will be applied in the next section. If $f\in \R[x]_d$ has a definite determinantal representation, 
$f=\det(M)$ with $M(e)\succ 0$, then $M^{\rm adj}$ is a $d \times d$
matrix with entries of degree $d-1$. This matrix has rank at most one on $\sV(f)$, as seen by the identity 
$M\cdot M^{\rm adj} = \det(M)\cdot I$. By Proposition~\ref{prop:DetHyperbolic}, 
the top left entry $M^{\rm adj}_{11}$ interlaces $f$ with respect to $e$. In fact, these properties
of $M^{\rm adj}$ are enough to reconstruct a definite determinantal representation $M$. 

\begin{Thm}\label{thm:NonVanishingDet} Let $A=(a_{ij})$ be a symmetric $d \times d$ matrix of real forms of degree $d-1$. 
Suppose that $f\in \R[\varx]_d$ is irreducible and hyperbolic 
with respect to $e \in \R^n$. If $A$ has rank one modulo $(f)$, then $f^{d-2}$ divides 
the entries of $A^{\rm adj}$ and the matrix $M=(1/f^{d-2})A^{\rm
    adj}$ has linear entries. Furthermore there exists $\gamma\in\R$ such that
\[
\det(M) = \gamma f.
\]
If $a_{11}$ interlaces $f$ with respect to $e$ and $A$ has full rank, then 
$\gamma \neq 0$ and $M(e)$ is definite. 
\end{Thm}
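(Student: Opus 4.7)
The first three claims are commutative algebra. For the divisibility, I would prove that for any $k \times k$ matrix $B$ over $\R[x]$ whose $2 \times 2$ minors all lie in a principal prime $(f)$, $f^{k-1}$ divides $\det(B)$. This follows by localizing at $(f)$: writing $B = UDV$ in Smith normal form with $D$ diagonal having entries $f^{a_i}$, the rank-one hypothesis forces at most one $a_i = 0$, so $\sum a_i \ge k - 1$. Applied to the $(d-1) \times (d-1)$ submatrices of $A$ this gives divisibility of the entries of $A^{\rm adj}$ by $f^{d-2}$; applied to $A$ itself, $f^{d-1} \mid \det A$. The linearity of $M = A^{\rm adj}/f^{d-2}$ is then a degree count, $(d-1)^2 - d(d-2) = 1$. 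Writing $\det A = cf^{d-1}$ and invoking $\det(A^{\rm adj}) = (\det A)^{d-1}$ gives $\det M = c^{d-1} f$, so $\gamma := c^{d-1}$ and $\gamma \neq 0$ exactly when $A$ has full rank.

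The core of the argument is the definiteness of $M(e)$. The starting point is the identity $A \cdot M = cf \cdot I$, obtained by dividing $A \cdot A^{\rm adj} = \det(A) \cdot I$ by $f^{d-2}$. Evaluated at $e$, this gives $A(e) M(e) = cf(e) \cdot I$, so both matrices are invertible and $M(e) = cf(e) \cdot A(e)^{-1}$; their signatures coincide up to a global sign. The hyperbolicity of $f$ with respect to $e$ says that for every $y \in \R^n$ the pencil $tM(e) + M(y)$ has $d$ real generalized eigenvalues, and the interlacing hypothesis on $a_{11}$, reinterpreted via $a_{11} = c^{2-d} \det(M')$ where $M'$ is the principal submatrix of $M$ obtained by deleting row $1$ and column $1$, says the $d-1$ generalized eigenvalues of $(M'(y), M'(e))$ interlace those of $(M(y), M(e))$ for every $y$.

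I would then proceed by induction on $d$. The base case $d = 2$ is direct via signatures: $\det A = A_{11} A_{22} - A_{12}^2$ is a pullback of the signature $(1, 2)$ form $uw - v^2$, so as a quadratic form in $x_1, \ldots, x_n$ it has at most one positive eigenvalue; since $cf$ with $c < 0$ has signature $(n - 1, 1)$ with $n - 1 \geq 2$ (the case $n \leq 2$ is vacuous for irreducible hyperbolic $f$ of degree $2$), matching forces $c > 0$. Then $\det M(e) = cf(e) > 0$, and combined with $M(e)_{22} = a_{11}(e) > 0$ the Schur complement formula gives $M(e)_{11} = A_{22}(e) > 0$, so both trace and determinant of $M(e)$ are positive and $M(e)$ is positive definite. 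For the inductive step, $M'$ is itself a $(d-1) \times (d-1)$ symmetric determinantal representation (up to the scalar $c^{d-2}$) of the hyperbolic polynomial $a_{11}$, and setting $A' := (M')^{\rm adj}$ yields a matrix of degree $d-2$ forms with $A' \cdot M' = c^{d-2} a_{11} \cdot I$, satisfying the hypotheses of the theorem with $a_{11}$ in place of $f$. Induction gives definiteness of $M'(e)$, and then Cauchy interlacing of the eigenvalues of $M'(e)$ inside those of $M(e)$, together with the sign of $\det M(e) = c^{d-1} f(e)$, forces $M(e)$ to be definite. The main obstacle is that $a_{11}$ need not be irreducible: to apply the induction one must pass to an irreducible factor of $a_{11}$ and verify the corresponding rank-one and interlacing conditions are inherited, which requires careful use of Lemma~\ref{lem:commonFactors} and Theorem~\ref{thm:Interlacers}.
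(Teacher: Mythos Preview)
Your treatment of the first part (divisibility, linearity, $\det M=\gamma f$) via Smith normal form over the DVR $\R[x]_{(f)}$ is correct and is essentially a self-contained version of the lemma the paper quotes from~\cite{PV}. Your observation that $\gamma\neq 0$ is equivalent to $A$ having full rank is also fine.

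The inductive approach to definiteness, however, has a genuine gap. To run the induction you replace $(f,A)$ by $(a_{11},A')$ with $A'=(M')^{\rm adj}$, but you never verify the key hypothesis ``$(A')_{11}$ interlaces $a_{11}$''. Concretely, $(A')_{11}=\det M''$, where $M''$ is $M$ with rows and columns $1$ and $2$ removed, and there is no reason given why $\det M''$ should interlace $a_{11}=c^{2-d}\det M'$. Successive principal minors of a linear symmetric pencil form an interlacing chain \emph{when the pencil is definite at some point}---which is precisely what you are trying to prove---and not in general. So the induction does not close on the stated hypotheses. The acknowledged irreducibility issue and the unresolved sign of $c$ in the Cauchy-interlacing endgame (for even $d$, knowing $c^{d-2}>0$ tells you nothing about the sign of $c^{d-1}$) are further obstacles, but the missing interlacing hypothesis is the structural one.

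The paper's argument bypasses all of this in one stroke, working with $A$ rather than $M$. The rank-one condition gives, for every $v\in\R^d$,
\[
a_{11}\cdot(v^TAv)\equiv (e_1^TAv)^2 \pmod{f},
\]
so $a_{11}\cdot(v^TAv)\ge 0$ on $\sV_\R(f)$. Since $a_{11}$ strictly interlaces $f$ (they are coprime, $f$ being irreducible of higher degree), Lemma~\ref{lem:nonnegInterlacers} forces $v^TAv\in{\rm Int}(f,e)$ up to the sign of $a_{11}(e)$; in particular $a_{11}(e)\cdot v^TA(e)v>0$ for every nonzero $v$, so $A(e)$---and hence $M(e)=cf(e)A(e)^{-1}$---is definite. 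No induction, no irreducibility of $a_{11}$, and no sign bookkeeping are needed.
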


\begin{proof}
  By assumption, $f$ divides all the $2\times 2$ minors of
  $A$. Therefore, $f^{d-2}$ divides all of the $(d-1)\times(d-1)$
  minors of $A$ and thus all of the entries of the adjugate matrix
  $A^{\rm adj}$, see \cite[Lemma 4.7]{PV}.  We can then consider the
  matrix $M = (1/f^{d-2})\cdot A^{\rm adj}$.  By similar arguments,
  $f^{d-1}$ divides $\det(A)$. Because these both have degree
  $d(d-1)$, we conclude that $\det(A) = \lambda f^{d-1}$ for some
  $\lambda \in \R$.  Putting all of this together, we find that
 \[\det(M) \;\;=\;\;\frac{1}{f^{d(d-2)}}\cdot \det(A^{\rm adj})\;\;=\;\;\frac{1}{f^{d(d-2)}} \det(A)^{d-1}\;\;=\;\;\lambda^{d-1} f,\]
 so we can take $\gamma=\lambda^{d-1}$.  Now, 
 suppose that $a_{11}$ interlaces $f$ and that 
 $\gamma=\lambda =0$. Then $\det(A)$ is identically
 zero. In particular, the determinant of $A(e)$ is zero, there
 is some nonzero vector $v\in \R^d$ in its kernel, and $v^TA(e)v$ is
 also zero.

We will show that the polynomial $v^TAv$ is not identically zero and that it 
interlaces $f$ with respect to $e$.  This will contradict the conclusion that $v^TA(e)v=0$.
Because $A$ has rank one on $\sV(f)$, for each $i=1,\hdots, d$ we have that 
\begin{equation}\label{eq:2x2minors}
 (e_i^TAe_i)(v^TAv) - (e_i^TAv)^2  = 0 \;\; \text{ modulo }\;(f).
 \end{equation}
If $v^TAv$ is identically zero, then $e_i^TAv$ vanishes on $\sV(f)$. 
Since $e_i^TAv$ only has degree $d-1$, it must vanish identically as well. 
As this holds for each $i$, this implies that $Av$ is zero, which contradicts our assumption. 
Thus $v^TAv$ cannot be identically zero. 

Furthermore, \eqref{eq:2x2minors} shows that $a_{11}\cdot (v^TAv)$ is nonnegative on $\sV_{\R}(f)$. 
Then Lemma~\ref{lem:nonnegInterlacers} shows that $v^TAv$ interlaces $f$ with respect to $e$. 
In particular, $v^TAv$ cannot vanish at the point $e$. Thus the determinant of $A$ and 
hence $M$ cannot be identically zero.

Thus $M$ is a determinantal representation of $f$. To show that $M(e)$ is definite, 
it suffices to show that $A(e)$ is definite. For any vector $v \in \R^d$, we see from 
\eqref{eq:2x2minors} with $i=1$ that $a_{11}v^TAv$ is nonnegative on $\sV_{\R}(f)$.
Thus $a_{11}(e) \cdot v^TAv$ belongs to ${\rm Int}(f,e)$ by Lemma~\ref{lem:nonnegInterlacers}
and in particular $a_{11}(e) \cdot v^TA(e)v$ is positive for all $v\in \R^d$. Hence the matrix $A(e)$ is definite. 
\end{proof}

\section{Multiaffine polynomials}\label{sec:multiaff}
An interesting special case of a hyperbolic polynomial is a
multiaffine polynomial whose hyperbolicity cone contains the positive
orthant. These polynomials are deeply connected to the theory of
matroids \cite{Bra11, COSW, WW}.

\begin{Def}
  A polynomial $f\in \R[\varx]$ is called \textbf{affine} in $x_i$ if
  the degree of $f$ in $x_i$ is at most one.  If $f$ is affine in each
  variable $x_1, \ldots, x_n$, then $f$ is called
  \textbf{multiaffine}.
\end{Def} 

Much of the literature on these polynomials deals with complex polynomials, rather than real polynomials, 
and the property of \emph{stability} in place of hyperbolicity. 

\begin{Def}
 A polynomial $f\in \C[\varx]$ is called \textbf{stable} if $f(\mu)$ is non-zero 
 whenever the imaginary part of each coordinate $\mu_i$ is positive for all $1 \leq i \leq n$.
\end{Def}

A real homogeneous polynomial $f \in \R[\varx]$ is stable if and only
if $f$ is hyperbolic with respect to every point in the positive orthant.
After a linear change of variables, every hyperbolic polynomial
is stable.  In 2004, Choe, Oxley, Sokal, and Wagner \cite{COSW} showed
that if $f\in \R[\varx]_d$ is stable, homogeneous, and multiaffine,
then its \emph{support} (the collection of $I\subset \{1,\dots,n\}$ for which
the monomial $\prod_{i\in I}x_i$ appears in $f$) is the set of bases
of a matroid. They further show that any \emph{representable} matroid
is the support of some stable multiaffine polynomial. In 2010,
Br\"and\'en \cite{Bra11} used this deep connection to disprove the
generalized Lax conjecture by showing that the bases-generating
polynomial of the V\'amos matroid (see Example~\ref{ex:Vamos}) is
hyperbolic but none of its powers has a determinantal representation.
This example will also provide a counterexample to equality in our
relaxation \eqref{eq:sosRelax}.

The Wronskian polynomials $\Delta_{e,a}f$ also played a large part in the 
study of multiaffine stable polynomials. They are particularly useful when the points $e$ and $a$ are 
unit vectors. In this case, we will simplify our notation and write
\[ \Delta_{ij}(f) \;\;:=\;\;\Delta_{e_i, e_j}(f) \;\;=\;\; \frac{\partial f}{\partial x_i} \cdot \frac{\partial f}{\partial x_j} - f\cdot \frac{\partial^2 f}{\partial x_i \partial x_j}. \]
Using these polynomials, Br\"and\'en \cite{Bra07}
established  a necessary and sufficient condition for multiaffine polynomials to be stable.

\begin{Thm}[\cite{Bra07}, Theorem~5.6]{\label{thm:brand}}
For multiaffine $f\in \R[\varx]$, the following are equivalent:
 \begin{enumerate}
  \item $\Delta_{ij} f$ is 
  nonnegative on $\R^n$ for all $1 \leq i,j \leq n$,
  \item $f$ is stable.
 \end{enumerate}
\end{Thm}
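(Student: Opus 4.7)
The plan is to prove the two directions separately, with the harder implication $(1) \Rightarrow (2)$ by induction on the number of variables $n$.

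For $(2) \Rightarrow (1)$, I would fix indices $i \ne j$ and restrict the other variables to arbitrary real values $x_k = c_k$. The resulting bivariate restriction $\tilde{f}(x_i,x_j) = \alpha x_i x_j + \beta x_i + \gamma x_j + \delta$ has real coefficients and is stable, since stability is preserved under real specialization (with the trivial case $\tilde f \equiv 0$ handled separately). A short computation gives $\Delta_{ij}\tilde f = \beta\gamma - \alpha\delta$, independent of $x_i,x_j$; by multiaffinity of $f$, the polynomial $\Delta_{ij}f$ is likewise free of $x_i$ and $x_j$, so evaluating at $(c_k)$ yields $(\Delta_{ij}f)(c) = \Delta_{ij}\tilde f$. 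Viewing $\tilde f$ as a linear polynomial in $x_i$ and computing $\mathrm{Im}$ of its root $-(\gamma x_j+\delta)/(\alpha x_j+\beta)$ for $x_j$ in the upper half plane, one checks directly that stability of $\tilde f$ is equivalent to $\beta\gamma - \alpha\delta \ge 0$, giving $\Delta_{ij}f \ge 0$ on $\R^n$.

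For $(1)\Rightarrow(2)$, the base case $n\le 1$ is immediate since every univariate affine polynomial is stable. For the inductive step, write $f = x_n A + B$ with $A,B$ multiaffine in $x_1,\ldots,x_{n-1}$. The identity $\Delta_{ij}(f|_{x_n=c}) = (\Delta_{ij}f)|_{x_n=c}$ for $i,j<n$, combined with the induction hypothesis, shows that $cA+B$ is stable for every $c\in\R$. Moreover, $\Delta_{ij}(cA+B)$ is a quadratic polynomial in $c$ that is nonnegative for every $c\in\R$, with leading coefficient $\Delta_{ij}A$; thus $\Delta_{ij}A \ge 0$ for all $i,j<n$, and by induction $A$ is stable. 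Given $x'\in\mathrm{UHP}^{n-1}$, stability of $A$ forces $A(x')\ne 0$, so $f(x',x_n)=0$ is equivalent to $x_n = h(x')$ where $h := -B/A$ is holomorphic on $\mathrm{UHP}^{n-1}$. Stability of $cA+B$ for every real $c$ rules out $h(x')\in\R$, so $\mathrm{Im}(h)$ is continuous and nonzero on the connected set $\mathrm{UHP}^{n-1}$ and has constant sign there.

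The main obstacle is pinning down the sign of $\mathrm{Im}(h)$, which is exactly what distinguishes stability from its failure. I would use the yet-unused hypothesis $\Delta_{n,j}f = A\partial_j B - B\partial_j A \ge 0$ on $\R^{n-1}$, which gives $\partial_j h = -\Delta_{n,j}f / A^2 \le 0$ on the real locus where $A\ne 0$. A first-order Taylor expansion at a real point $y$ with $A(y)\ne 0$ then yields
\[
\mathrm{Im}\bigl(h(y + i\epsilon\mathbf{1})\bigr) \;=\; \epsilon \sum_{j<n} \partial_j h(y) + O(\epsilon^2),
\]
which is nonpositive for small $\epsilon>0$. Provided some $\partial_j h$ is not identically zero on $\R^{n-1}$, this quantity is strictly negative on an open set of $y$, so the constant sign of $\mathrm{Im}(h)$ throughout $\mathrm{UHP}^{n-1}$ must be negative and $f$ is stable. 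The leftover case $\partial_j h \equiv 0$ for all $j<n$ forces $h$ to be a real constant $-c$, so $B = cA$ and $f = (x_n+c)A$ is a product of two stable polynomials, hence stable. The degenerate side case $A \equiv 0$ reduces $f$ to a multiaffine polynomial in $n-1$ variables, where induction applies directly.
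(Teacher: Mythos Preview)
The paper does not give its own proof of this theorem: it is quoted verbatim from Br\"and\'en \cite{Bra07} as Theorem~5.6 there, and the only further comment is that the implication $(2)\Rightarrow(1)$ holds without the multiaffine hypothesis (which is essentially a special case of Theorem~\ref{thm:Interlacers}). So there is nothing in the paper to compare your argument against directly.

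That said, your proposal is a correct and self-contained proof. The $(2)\Rightarrow(1)$ direction is the standard two-variable reduction; your claim that stability of $\tilde f$ is \emph{equivalent} to $\beta\gamma-\alpha\delta\ge 0$ is a slight overstatement (e.g.\ $\tilde f\equiv 0$ has $\Delta_{ij}\tilde f=0$ but is not stable), but only the forward direction is used. For $(1)\Rightarrow(2)$, your induction via $f=x_nA+B$ is sound: the key steps---that $\Delta_{ij}A\ge 0$ follows from the leading $c^2$-coefficient of $\Delta_{ij}(cA+B)$, that $h=-B/A$ avoids the real axis on ${\rm UHP}^{n-1}$ by stability of each $cA+B$, and that the sign of ${\rm Im}(h)$ is pinned down by the first-order expansion using $\partial_j h=-\Delta_{nj}f/A^2\le 0$---are all correct. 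One small point of exposition: the claim that ${\rm Im}(h)$ is nowhere zero tacitly assumes $cA+B\not\equiv 0$ for every real $c$, which fails precisely when $h$ is a real constant; since you handle that case separately at the end, it would be cleaner to branch on ``$h$ constant vs.\ non-constant'' before invoking the constant-sign argument. With that reordering, the proof is complete.
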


Br\"and\'en also notes that the implication (2)$\Rightarrow$(1)
holds for polynomials that are not multiaffine, giving an alternative
proof of a part of Theorem~\ref{thm:Interlacers} above. The other
implication however, does not hold in general, as the following
example shows.

\begin{Example}
Let $h=x_1^2+x_2^2$, 
  $q=x_1+x_2$ and $N \in \N$.
 Clearly $q^N h$ is not hyperbolic with respect to any $e \in \R^2$,
 but for all $i,j \in \{1,2\}$ we have
 \begin{align*}
  \Delta_{ij}(q^N h)\:&=\:q^{2N} \Delta_{ij}h + N q^{2N-2} h^2 \Delta_{ij}q \\
  &=\:q^{2N-2}(q^2 \Delta_{ij}h+ N h^2).
 \end{align*}
 Now let $z \in \R$ be the minimal value that $q^2 \Delta_{ij}h$
 takes on the unit sphere and let $N>|z|$. Then, since $\Delta_{ij}(q^N h)$ is homogeneous, we see that $\Delta_{ij}(q^N h)$
 is nonnegative on $\R^2$.  Because $\Delta_{ij}(q^N h)$ is a
 homogeneous polynomial in two variables, it is even a sum of squares.
 Thus $\Delta_{ab} (q^N h)$ is a sum of squares for all $a,b$ in the
 positive orthant. 
 This also shows that
 the converse of Corollary \ref{cor:power} is not true, i.e. there is some polynomial $f$ such that $\Delta_{e,a}f$ is a sum of squares for all $e,a$ in some
 full dimensional cone, but no power of $f$ has a definite
 determinantal representation. 
\end{Example}

In an analogous statement, the polynomials $\Delta_{ij}$ can also be used to determine 
whether or not a homogeneous multiaffine stable polynomial has a definite determinantal representation.

\begin{Thm}{\label{thm:muaff}}
 Let $f\in \R[\varx]_d$ be homogeneous and stable. Suppose $f$ is affine in the variables $x_1, \ldots, x_d$
 and the coefficient of $x_1 \cdots x_d$ in $f$ is non-zero.
Then the following are equivalent:
 \begin{enumerate}
  \item $\Delta_{ij} f$ is 
  a square in $\R[\varx]$ for all $1 \leq i,j \leq d$;
 \item $\frac{\partial f}{\partial x_i} \cdot \frac{\partial f}{\partial x_j}$ is a square in $\R[\varx]/(f)$ for all $1 \leq i,j \leq d$;
  \item $f$ has a definite determinantal representation.
 \end{enumerate}
\end{Thm}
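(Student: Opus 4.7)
\textit{Proof proposal.} The implication $(1) \Rightarrow (2)$ is immediate: the definition $\Delta_{ij}f = \partial_i f \cdot \partial_j f - f \cdot \partial_i\partial_j f$ shows that any representation $\Delta_{ij}f = p_{ij}^2$ in $\R[x]$ reduces modulo $f$ to $\partial_i f \cdot \partial_j f \equiv p_{ij}^2 \pmod{f}$. For $(3) \Rightarrow (1)$, suppose $f = \det(M)$ is a definite symmetric determinantal representation with $M(e) \succ 0$ for some $e$ in the positive orthant, which lies in $C(f,e)$ by stability. Since $e_i$ belongs to the closure of the positive orthant and hence to $\overline{C(f,e)}$, the matrix $M_i := M(e_i)$ is positive semidefinite. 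A matrix-determinant-lemma computation gives $\deg_{x_i}\det(M) = \rank(M_i)$; combined with the affineness of $f$ in $x_i$ and the nonvanishing of the coefficient of $x_1\cdots x_d$, this forces $\rank(M_i) = 1$, so $M_i = \lambda_i \lambda_i^T$ for some $\lambda_i \in \R^d$. The sum-of-squares identity \eqref{eq:HesseId} from the proof of Theorem~\ref{thm:goodSOSrelax} then collapses to the single term $\Delta_{ij}f = (\lambda_i^T M^{\mathrm{adj}} \lambda_j)^2$.

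For the key implication $(2) \Rightarrow (3)$, the strategy is to build a symmetric $d\times d$ matrix $A$ of forms of degree $d-1$ that has rank one modulo $f$, and then invoke Theorem~\ref{thm:NonVanishingDet}. Assume first that $f$ is irreducible, so that $R := \R[x]/(f)$ is a domain; the reducible case reduces to this, noting that factors of a multiaffine stable polynomial split the variables cleanly, and condition $(2)$ passes to each irreducible factor via the Chinese Remainder Theorem, yielding a block-diagonal determinantal representation. By hypothesis $(2)$, for each $i, j \in [d]$ there is $p_{ij} \in \R[x]_{d-1}$ with $p_{ij}^2 \equiv \partial_i f \cdot \partial_j f \pmod{f}$, and we may take $p_{ii} = \partial_i f$. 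The essential technical point is sign-consistency: the congruence
\[
(p_{ij}\cdot \partial_1 f)^2 \;\equiv\; (\partial_1 f)^2 \cdot \partial_i f \cdot \partial_j f \;\equiv\; (p_{1i}\cdot p_{1j})^2 \pmod{f},
\]
combined with the fact that $\partial_1 f$ is a nonzero element of the domain $R$ (it has degree $d-1 < d = \deg f$, and is not identically zero because the coefficient of $x_1\cdots x_d$ is nonzero), allows us to flip signs of the off-diagonal $p_{ij}$ so that $p_{ij}\cdot\partial_1 f \equiv p_{1i}\cdot p_{1j} \pmod{f}$. After this adjustment, every $2\times 2$ minor $A_{ij}A_{kl}-A_{il}A_{kj}$ of $A := (p_{ij})$ vanishes modulo $f$ (after cancelling the factor $(\partial_1 f)^2$ in the domain), so $A$ has rank one modulo $f$; the same cancellation argument enforces $p_{ij} = p_{ji}$ as polynomials, since their difference lies in $(f)$ but has degree strictly less than $d$.

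To finish, note that $A_{11} = \partial_1 f = D_{e_1} f$ interlaces $f$ with respect to $e$ by Theorem~\ref{thm:niceCones}, as $e_1 \in \overline{C(f,e)}$. Applying Theorem~\ref{thm:NonVanishingDet} then produces $M := (1/f^{d-2})A^{\mathrm{adj}}$ of linear forms with $\det(M) = \gamma f$ for some $\gamma \in \R^{\times}$ and $M(e)$ definite; rescaling yields the desired definite symmetric determinantal representation. The main obstacles are the sign-consistency argument --- in particular verifying that the pairwise-square hypothesis of $(2)$ is strong enough to force the vanishing of all $2\times 2$ minors of $A$ modulo $f$ (not just the principal ones) --- together with the verification of the full-rank hypothesis in Theorem~\ref{thm:NonVanishingDet} (i.e.\ $\det(A) \not\equiv 0$), which should follow from the interlacing property of $A_{11}$ via the contradiction argument already present in the proof of that theorem.
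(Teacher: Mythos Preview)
Your approach for all three implications matches the paper's: $(1)\Rightarrow(2)$ is immediate, $(3)\Rightarrow(1)$ uses the rank-one structure of the $M_i$ together with the identity \eqref{eq:HesseId}, and $(2)\Rightarrow(3)$ builds a rank-one matrix $A$ and applies Theorem~\ref{thm:NonVanishingDet}. Your sign-consistency and minor-vanishing arguments are correct and essentially identical to the paper's.

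The genuine gap is the full-rank hypothesis of Theorem~\ref{thm:NonVanishingDet}. You suggest that this ``should follow from the interlacing property of $A_{11}$ via the contradiction argument already present in the proof of that theorem,'' but this misreads that proof. The contradiction argument there \emph{assumes} full rank: when $v^TAv$ is shown not to vanish identically, the proof argues that $v^TAv\equiv 0$ would force $Av\equiv 0$, which ``contradicts our assumption'' --- namely, the full-rank assumption. Without that input, nothing prevents the existence of a nonzero $v$ with $Av\equiv 0$, and the rest of the argument never gets off the ground. So full rank must be verified independently, and your proposal does not contain an idea for doing so.

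The paper supplies a concrete verification that exploits the multiaffine structure. For each $k\in[d]$, the point $p_k=\sum_{j\neq k}e_j$ lies on $\sV_\R(f)$ (since $f$ is affine in $x_k$ and homogeneous of degree $d$). At $p_k$ one has $\partial_jf(p_k)=0$ for $j\neq k$, hence $a_{kj}(p_k)^2=\partial_kf(p_k)\cdot\partial_jf(p_k)=0$, while $a_{kk}(p_k)=\partial_kf(p_k)$ equals the nonzero coefficient of $x_1\cdots x_d$. Evaluating the $k$th row of $Av=0$ at $p_k$ then forces $v_k=0$. This is the missing ingredient.
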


\begin{Lemma}{\label{lem:sqfac}}
 Let $f\in \R[\varx]$ be affine in $x_i$ and $x_j$ for some $i,j \in\{1,\dots,n\}$. If $f=g\cdot h$ with $g,h \in \R[\varx]$, then $\Delta_{ij} f$ is a square if and only if
 $\Delta_{ij} g$ and $\Delta_{ij} h$ are squares.
\end{Lemma}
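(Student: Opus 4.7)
The plan is to combine a Leibniz-type identity for $\Delta_{ij}$ with a short unique-factorization argument. First, for \emph{any} factorization $f=gh$ in $\R[\varx]$, direct expansion of the product rule gives
\[
\Delta_{ij}(gh) \;=\; h^2\,\Delta_{ij}g \;+\; g^2\,\Delta_{ij}h.
\]
To see this, expand $(g_ih+gh_i)(g_jh+gh_j)-gh\,(g_{ij}h+g_ih_j+g_jh_i+gh_{ij})$; the cross terms $gg_ihh_j$ and $gg_jhh_i$ cancel, leaving $h^2(g_ig_j-gg_{ij})+g^2(h_ih_j-hh_{ij})$. This step uses no affinity.

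Next I invoke the affinity hypothesis. The case $i=j$ is immediate, since $\deg_{x_i}f\le 1$ forces $\deg_{x_i}g,\deg_{x_i}h\le 1$, so $f_{ii}=g_{ii}=h_{ii}=0$, and $\Delta_{ii}f=f_i^2$, $\Delta_{ii}g=g_i^2$, $\Delta_{ii}h=h_i^2$ are all automatically squares. So I may assume $i\ne j$. The relation $\deg_{x_i}(g)+\deg_{x_i}(h)=\deg_{x_i}(gh)\le 1$ forces at least one of $g,h$ to be independent of $x_i$; after swapping the roles of $g$ and $h$ if necessary, I take this factor to be $h$. Then $h_i=0$ and $h_{ij}=\partial_j(h_i)=0$, so $\Delta_{ij}h=0$ and the identity collapses to $\Delta_{ij}(gh)=h^2\,\Delta_{ij}g$. (Notably, affinity in $x_j$ is not needed for this collapse.)

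The final step is unique factorization in $\R[\varx]$: if $h\ne 0$ and $h^2\,\Delta_{ij}g=q^2$ for some $q\in\R[\varx]$, then $h\mid q$ in this UFD, hence $\Delta_{ij}g=(q/h)^2$ is a square; the converse direction is trivial. Combined with the fact that $\Delta_{ij}h=0=0^2$ is already a square, this yields the chain ``$\Delta_{ij}f$ is a square'' $\Longleftrightarrow$ ``$\Delta_{ij}g$ is a square'' $\Longleftrightarrow$ ``both $\Delta_{ij}g$ and $\Delta_{ij}h$ are squares,'' which is the claim. I do not foresee a genuine obstacle here: the proof reduces to the Leibniz identity plus standard UFD bookkeeping, and the only mildly subtle point is recognizing that affinity in a single variable already annihilates one of the two terms in the identity.
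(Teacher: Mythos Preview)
Your proof is correct and follows essentially the same approach as the paper's: both establish the Leibniz-type identity $\Delta_{ij}(gh)=h^2\Delta_{ij}g+g^2\Delta_{ij}h$, use affinity in $x_i$ to force one of the two summands to vanish, and then read off the equivalence. Your treatment is slightly more explicit in justifying the UFD step $h^2\mid q^2\Rightarrow h\mid q$ (which the paper leaves implicit when writing $\Delta_{ij}g=(\Delta_{ij}f)/h^2$), and you correctly observe that affinity in $x_j$ is not actually needed.
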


\begin{proof}
Suppose $\Delta_{ij} f$ is a square. Since $f$ is affine in $x_i, x_j$, both $g$ and $h$ are affine in $x_i, x_j$
and either $\frac{\partial g}{\partial x_i}=0$ or $\frac{\partial h}{\partial x_i}=0$.
It follows that either $\Delta_{ij}g=0$ or $\Delta_{ij}h=0$.
Using the identity $\Delta_{ij} f = g^2 \Delta_{ij}h + h^2 \Delta_{ij} g$, we see that 
either $\Delta_{ij}g=0$ or $\Delta_{ij}g=(\Delta_{ij} f )/ h^2.$
In both cases $\Delta_{ij} g$ is a square. The same holds true for $\Delta_{ij} h$.
For the converse, suppose that $\Delta_{ij} g$ and $\Delta_{ij} h$ are squares. 
As we saw above, one of them is zero. Thus $\Delta_{ij} f=h^2 \Delta_{ij} g$ or
$\Delta_{ij} f=g^2 \Delta_{ij} h$.
\end{proof}

\begin{proof}[Proof of Thm.~\ref{thm:muaff}] 
 ($1 \Rightarrow 2$) 
Clear. \smallskip

($2\Rightarrow 3$)
For a start, suppose that $f$ is irreducible. We will construct a
matrix $A$ satisfying the hypotheses of Theorem~\ref{thm:NonVanishingDet}.
For every $i \leq j$, the polynomial $\frac{\partial f}{\partial x_i} \cdot \frac{\partial f}{\partial x_j}$ is 
equivalent to a square $a_{ij}^2$ modulo $(f)$. In the case $i=j$ we can choose $a_{ii}=\frac{\partial f}{\partial x_i}$.
Then it is easy to check that $a_{11}a_{ii}$ equals $a_{1i}^2$ modulo $(f)$.
Further, for every $2\leq i<j \leq d$, the polynomials $(a_{11}a_{ij})^2$ and $(a_{1i}a_{1j})^2$ are equivalent modulo $f$. 
After changing the sign of $a_{ij}$ if necessary, we see that $a_{11}a_{ij}$ equals $a_{1i}a_{1j}$ modulo $(f)$. 
Because $f$ is irreducible, it follows that the symmetric matrix $A =(a_{ij})_{ij}$ has rank one on $\sV(f)$.

We now need to show that $A$ has full rank.  For each $k=1, \hdots, d$, consider the point
$p_k=\sum_{j\in[d]\backslash\{k\}}e_j$, which lies in the real variety
of $f$.  For $j\neq k$, we see that $\partial f / \partial x_j$
vanishes at $p_k$, and therefore so must $a_{kj}$. On the other hand,
$a_{kk}(p_k) = \partial f / \partial x_k (p_k)$ equals the nonzero
coefficient of $x_1\cdots x_d$ in $f$. Now suppose that $Av=0$ for
some $v\in \R^d$. The $k$th row of this is $\sum_j v_j a_{kj}=0$.
Plugging in the point $p_k$ then shows that $v_k$ must be zero, and
thus $v$ is the zero vector.  Since $f$ is stable, $a_{11}=\partial f/\partial x_1$ interlaces it,
and so by  Theorem~\ref{thm:NonVanishingDet}, $f$ has a definite determinantal representation.  

If $f$ is reducible and $g$ is an irreducible factor of
$f$, then, by Lemma \ref{lem:sqfac}, $\Delta_{ij} g$ is a square. Since
every irreducible factor of $f$ has a definite determinantal
representation,  so has $f$.

 ($3 \Rightarrow 1$)
 Let $f = \det(M)= \det(\sum_ix_i M_i)$ where $M_1, \hdots, M_n$ are real symmetric $d\times d$ matrices where 
 $\sum_i M_i \succ 0$. Because $f$ is affine in each of the variables $x_1, \hdots, x_d$, the matrices 
$M_1, \hdots, M_d$ must have rank one. Furthermore, since $f$ is stable, these rank-one matrices must be positive 
semidefinite (see \cite{Bra11}, proof of Theorem~2.2). Thus we can write $M_i = v_i v_i^{ T}$, with $v_i \in \R^{d}$ for each $1 \leq i \leq d$. 
Then by \eqref{eq:HesseId} and Proposition \ref{prop:HesseDet}, we have  $\Delta_{ij} f = (v_i^T M^{\rm adj}v_j)^2$ for $1 \leq i,j \leq d$. \smallskip
\end{proof}

\begin{Cor}
 Let $f\in \R[\varx]$ be homogeneous, stable and multiaffine.
 Then the following are equivalent:
 \begin{enumerate}
  \item $\Delta_{ij} f$ is 
  a square for all $1 \leq i,j \leq n$;
    \item $\frac{\partial f}{\partial x_i} \cdot \frac{\partial f}{\partial x_j}$ is a square in $\R[\varx]/(f)$ for all $1 \leq i,j \leq n$;
  \item $f$ has a definite determinantal representation.
 \end{enumerate}
\end{Cor}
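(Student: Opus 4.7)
The strategy is to reduce this corollary to Theorem~\ref{thm:muaff} by noting that the only strengthening is that the conditions now range over all $1\le i,j\le n$ rather than just $1\le i,j\le d$, while the hypothesis on the coefficient of a specific monomial is dropped. The implication $(1)\Rightarrow(2)$ is immediate, just as in Theorem~\ref{thm:muaff}.

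For $(3)\Rightarrow(1)$: if $f=\det(M)$ with $M=\sum_{i=1}^n x_i M_i$ and $\sum_i M_i\succ 0$, then since $f$ is multiaffine, it is affine in \emph{every} variable $x_i$, so the argument from the proof of Theorem~\ref{thm:muaff} applies uniformly: each $M_i$ must have rank one, and by stability each $M_i$ is positive semidefinite, so $M_i=v_iv_i^T$. Then the sum-of-squares identity \eqref{eq:HesseId} combined with Proposition~\ref{prop:HesseDet} gives $\Delta_{ij}f=(v_i^T M^{\mathrm{adj}}v_j)^2$ for all $1\le i,j\le n$, not only for the range $1\le i,j\le d$.

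The main step is $(2)\Rightarrow(3)$, and this is where I would bootstrap from Theorem~\ref{thm:muaff}. Since $f$ is nonzero, homogeneous of degree $d$, and multiaffine, its support is nonempty, so there exist distinct indices $i_1,\dots,i_d\in\{1,\dots,n\}$ such that the coefficient of $x_{i_1}\cdots x_{i_d}$ in $f$ is nonzero. After a permutation of the variables we may assume $\{i_1,\dots,i_d\}=\{1,\dots,d\}$. Then $f$ is affine in $x_1,\dots,x_d$ (being multiaffine) and the coefficient of $x_1\cdots x_d$ is nonzero, so $f$ satisfies the hypotheses of Theorem~\ref{thm:muaff}. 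Condition (2) of the corollary, restricted to the subrange $1\le i,j\le d$, is precisely condition (2) of Theorem~\ref{thm:muaff}. Applying that theorem yields a definite determinantal representation of $f$, completing the proof.

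No step here presents a real obstacle; the only thing to verify is that a monomial of the right shape exists, which is automatic for any nonzero multiaffine homogeneous polynomial. The content of the corollary is really that once one knows the theorem, the seemingly stronger statement for all $n$ variables adds nothing beyond choosing $d$ variables that participate in some basis of the support matroid.
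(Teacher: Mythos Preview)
Your proposal is correct and is precisely the unpacking of the paper's one-line proof, which simply reads ``This is an immediate consequence of the preceding theorem.'' You have correctly identified the two small points that need checking beyond a direct citation: that some squarefree degree-$d$ monomial appears in $f$ so that Theorem~\ref{thm:muaff} applies after relabeling, and that the $(3)\Rightarrow(1)$ argument extends to all pairs $i,j$ because multiaffinity forces every $M_i$ to have rank one.
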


\begin{proof}
 This is an immediate consequence of the preceding theorem.
\end{proof}

\begin{Cor} 
 Let $1 \leq k \leq n$ and let $f\in \R[\varx]$ be a multiaffine stable polynomial.
 If $f$ has a definite determinantal representation, then $\frac{\partial f}{\partial x_k}$ and $f|_{x_k=0}$
 also have a definite determinantal representation.
\end{Cor}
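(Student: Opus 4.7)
The plan is to reduce the statement to the preceding corollary by checking its first condition for both $g := \partial f/\partial x_k$ and $h := f|_{x_k=0}$. Since $f$ is multiaffine, we have the decomposition $f = x_k\, g + h$, where $g,h\in\R[\varx\setminus\{x_k\}]$ are multiaffine and homogeneous of degrees $d-1$ and $d$. They are also stable: the preservation of stability under specialization of a single variable to $0$, and the stability of the coefficient of $x_k$ in a stable multiaffine polynomial (which equals $\partial f/\partial x_k$ in the multiaffine case), are both classical preservation results that we can invoke.

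The key calculation is as follows. Since $f$ has a definite determinantal representation, the preceding corollary gives that $\Delta_{ij}(f)$ is a square in $\R[\varx]$ for every $1\le i,j\le n$. Fix $i,j\neq k$. Substituting $f = x_kg+h$ into the definition of $\Delta_{ij}$ and expanding as a polynomial in $x_k$ yields
\[
\Delta_{ij}(f) \;=\; x_k^2\,\Delta_{ij}(g) \,+\, x_k\cdot C_{ij} \,+\, \Delta_{ij}(h),
\]
for some cross term $C_{ij}\in\R[\varx\setminus\{x_k\}]$. In particular $\Delta_{ij}(f)$ has $x_k$-degree at most $2$.

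Writing $\Delta_{ij}(f) = p^2$ forces $p$ to have $x_k$-degree at most $1$, so $p = x_k\,a + b$ with $a,b\in \R[\varx\setminus\{x_k\}]$. Comparing the coefficients of $x_k^2$ and of $x_k^0$ in the identity $p^2 = x_k^2\,a^2 + 2x_k\,ab + b^2$ gives $\Delta_{ij}(g) = a^2$ and $\Delta_{ij}(h) = b^2$. Hence condition (1) of the preceding corollary holds for both $g$ and $h$, so each admits a definite determinantal representation, which is exactly what is asserted. There is no serious obstacle in this argument; the only subtleties are the two classical stability preservation facts invoked in the first paragraph, and the degree argument used to extract the square roots of $\Delta_{ij}(g)$ and $\Delta_{ij}(h)$. (Degenerate cases such as $g=0$ or $h=0$, corresponding to $f$ independent of $x_k$ or divisible by $x_k$, are vacuous.)
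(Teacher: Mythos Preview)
Your proof is correct and follows essentially the same route as the paper: both expand $\Delta_{ij}(f)$ as a quadratic in $x_k$ with leading coefficient $\Delta_{ij}(g)$ and constant term $\Delta_{ij}(h)$ (the paper cites Wagner--Wei for this identity, you derive it directly), then use that a square of $x_k$-degree $\le 2$ forces both the top and bottom coefficients to be squares, and conclude via the preceding corollary. Your version is slightly more explicit about the degree argument and the stability of $g$ and $h$, but the structure is identical.
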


\begin{proof}
Let $1 \leq k,i,j \leq n$, $g=\frac{\partial f}{\partial x_k}$ and $h=f|_{x_k=0}$. 
Wagner and Wei \cite{WW} calculated 
\[ \Delta_{ij} f \;\;=\;\; x_k^2 \cdot \Delta_{ij} g + x_k \cdot p+\Delta_{ij} h, \] 
where $p, g, h \in \R[x_1, \ldots , x_n]$ do not depend on $x_k$.
Since $\Delta_{ij} f$ is a square, $\Delta_{ij} g$ and $\Delta_{ij} h$ are squares as well. 
Thus $g$ and $h$ have a definite determinantal representation.
\end{proof}

\begin{Cor}
 Let $f=g\cdot h$, where $f,g,h \in \R[\varx]$ are multiaffine stable polynomials. 
 Then $f$ has a definite determinantal representation if and only if both
 $g$ and $h$ do.
\end{Cor}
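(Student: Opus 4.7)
The plan is to derive both directions from the preceding Corollary, which characterizes definite determinantal representability of a multiaffine stable polynomial by squareness of the polynomials $\Delta_{ij}$. Since $f$, $g$, and $h$ are all multiaffine and stable by hypothesis, that Corollary applies to each of them individually. In particular, it applies to $f$, $g$, and $h$ with the same collection of coordinate pairs $(i,j)$ indexing the variables.

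The key step is to invoke Lemma~\ref{lem:sqfac} coordinatewise: because $f = g \cdot h$ is affine in $x_i$ and $x_j$ for every pair, the lemma gives that $\Delta_{ij} f$ is a square in $\R[x]$ if and only if both $\Delta_{ij} g$ and $\Delta_{ij} h$ are squares. Chaining this with the preceding Corollary produces the equivalences that $f$ has a definite determinantal representation iff $\Delta_{ij} f$ is a square for all $1 \le i, j \le n$ iff $\Delta_{ij} g$ and $\Delta_{ij} h$ are squares for all such $i, j$ iff each of $g$ and $h$ has a definite determinantal representation. This packages both implications simultaneously.

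For additional transparency, the ``if'' direction also admits a direct construction-theoretic argument: given definite representations $g = \det(M_g)$ and $h = \det(M_h)$, the stability of $g$ and $h$ forces their hyperbolicity cones to contain the positive orthant (possibly after replacing $M_g$ or $M_h$ by its negative), so one may choose a common $e$ in the positive orthant with $M_g(e), M_h(e) \succ 0$; then the block diagonal matrix $M_g \oplus M_h$ is a definite representation of $f = g \cdot h$. This is not strictly needed, since it is already subsumed by the chain above.

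No step here presents a genuine obstacle once the preceding Corollary is available; all the analytic and algebraic work was done in Theorem~\ref{thm:muaff}, its Corollary, and Lemma~\ref{lem:sqfac}. The mild point to check is simply that the hypotheses of the Corollary hold for each of $f$, $g$, $h$, which is immediate from the statement of the result being proved.
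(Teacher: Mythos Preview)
Your proof is correct and is essentially the same as the paper's, which simply says the result follows directly from Lemma~\ref{lem:sqfac} and Theorem~\ref{thm:muaff}; invoking the preceding Corollary in place of Theorem~\ref{thm:muaff} amounts to the same thing, since that Corollary is just the fully multiaffine specialization. Your optional block-diagonal argument for the ``if'' direction is a harmless extra, though the remark about ``possibly replacing $M_g$ or $M_h$ by its negative'' is slightly garbled (it is about fixing the sign of definiteness, not about the location of the hyperbolicity cone).
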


\begin{proof}
 This follows directly from Lemma \ref{lem:sqfac} and Theorem \ref{thm:muaff}.
\end{proof}

\begin{Example}[Elementary Symmetric Polynomials]
 Let $e_d \in \R[x]$ be the elementary symmetric polynomial of degree $d$. We have $\Delta_{ij} e_1=1$, $\Delta_{ij} e_n=0$ and
 $\Delta_{ij} e_{n-1}= (x_1 \ldots x_n/x_i x_j)^2$ for all $1 \leq i<j \leq n$. It is a classical result
 that these are the only cases
 where $e_d$ has a definite determinantal representation \cite{San}. 
 Indeed, for $n\geq 4$ and $2 \leq d \leq n-2$ the coefficients of the monomials $(x_3 x_5 \cdots x_{d+2})^2$, 
 $(x_4 x_5 \cdots x_{d+2})^2$ and $x_3 x_4 ( x_5 \cdots x_{d+2})^2$ in $\Delta_{12} e_d$ are all $1$.
 Specializing to $x_j=1$ for $j\geq 5$ then shows that  $\Delta_{12}f$ is not a square.
\end{Example} 

\begin{Example}[The V\'amos Polynomial]
\label{ex:Vamos}
The relaxations \eqref{eq:sosRelax} and \eqref{eq:sosmodIRelax} are not always exact. 
An example of this comes from the multiaffine quartic polynomial in $\R[x_1, \hdots, x_8]_4$ 
given as the bases-generating polynomial of the V\'amos matroid: 
\[h(x_1, \hdots, x_8) = \sum_{I \subset \binom{[8]}{4} \backslash C} \prod_{i\in I} x_i, \]
where $C = \{\{1,2,3,4\},\{1,2,5,6\},\{1,2,7,8\},\{3,4,5,6\},\{3,4,7,8\}\}$.
Wagner and Wei \cite{WW} have shown that the polynomial $h$ is stable, using an improved version of Theorem \ref{thm:brand} and
representing $\Delta_{13}h$ as a sum of squares.
But it turns out
that $\Delta_{78} h$ is not a sum of squares. Because the cone of sums
of squares is closed, it follows that for some $a,e$ in the
hyperbolicity cone of $h$, the polynomial 
 $D_{e}h\cdot D_{a}h - h\cdot D_{e}D_{a}h$ is not a sum of squares. 
 In order to show that $\Delta_{78} h$ is not a sum of squares, it suffices to 
 restrict to the subspace $\{x =x_1 = x_2,\;y =x_3=x_4, \;z =x_5=x_6\}$ and show that the resulting polynomial 
$W=(1/4)\Delta_{78} h (x,x,y,y,z,z,w,w)$ is not a sum of squares. This restriction is given by 
\begin{align*}
W \;=\; &x^4y^2 + 2x^3y^3 + x^2y^4 + x^4yz + 5x^3y^2z + 
6x^2y^3z + 2xy^4z + x^4z^2 + 5x^3yz^2 + 10x^2y^2z^2 \\
&+ 6xy^3z^2 + y^4z^2 + 2x^3z^3 + 6x^2yz^3 + 6xy^2z^3 + 
2y^3z^3 + x^2z^4 + 2xyz^4 + y^2z^4.
\end{align*}
This polynomial vanishes at six points in $\P^2(\R)$, 
\[[1:0:0], \;[0:1:0],\; [0:0:1],\;[1:-1:0],\;[1:0:-1], \text{ and }\;[0:1:-1].\]
Thus if $W$ is written as a sum of squares $\sum_k h_k^2$, then each $h_k$ must 
vanish at each of these six points. The subspace of $\R[x,y,z]_3$ of cubics vanishing
in these six points is four dimensional and spanned by 
$v =\{x^2y+xy^2, x^2z+xz^2, y^2z+yz^2, xyz\}$. 
Then $W$ is a sum of squares if and only if there exists a positive semidefinite 
$4\times 4$ matrix $G$ such that $W = v^TGv$.  However, the resulting 
linear equations in the variables $G_{ij}, \; 1\leq i\leq j \leq 4$,
have the unique solution
\[G = 
\begin{pmatrix}
1& 1/2&1& 2\\
1/2& 1& 1& 2\\ 
1& 1& 1& 2\\
2& 2& 2& 5
\end{pmatrix}.
 \]
One can see that $G$ is not positive semidefinite from its determinant, which is $-1/4$. 
Thus $W$ cannot be written as a sum of squares. 

This, along with Corollary~\ref{cor:power}, provides another proof that no power of the V\'amos polynomial $h(x)$
has a definite determinantal representation. 

The polynomial $\Delta_{78} h$ is also not a sum of squares modulo the
ideal $(h)$.  To see this, suppose $\Delta_{78} h=\sum_i q_i^2 +
p\cdot h$ for some $p,q_i \in \R[x]$ and consider the terms with
largest degree in $x_7$ and $x_8$ in this expression.  Writing 
$h= h_0+h_1(x_7+x_8)+x_7x_8h_2$ where $h_0, h_1, h_2$ lie in $\R[x_1,
\hdots, x_6]$, we see that the leading form $x_7x_8 h_2$ is
\textit{real radical}, meaning that whenever a sum of squares $\sum_i
g_i^2$ lies in the ideal $(x_7x_8 h_2)$, this ideal contains each
polynomial $g_i$. Since $\Delta_{78} h$ does not involve the variables
$x_7$ and $x_8$, we can then reduce the polynomials $q_i$ modulo the
ideal $(h)$ so that they do not contain the variables $x_7, x_8$. See
\cite[Lemma 3.4]{realrad}.  Because $h$ does involve the variable
$x_7$ and $x_8$, this results in a representation of $\Delta_{78} h$
as a sum of squares, which is impossible, as we have just seen.
\end{Example}

\section{The cone of interlacers and its boundary} \label{sec:boundary}

Here we investigate the convex cone ${\rm Int}(f,e)$ of polynomials interlacing $f$. 
We compute this cone in two examples coming from optimization 
and discuss its \emph{algebraic boundary}, the minimal polynomial 
vanishing on the boundary of ${\rm Int}(f,e)$, when this cone is full dimensional. 
For smooth polynomials, this algebraic boundary is irreducible. 

If the real variety of a hyperbolic polynomial $f$ is smooth, 
then the cone ${\rm Int}(f,e)$ of interlacers is full dimensional in $\R[\varx]_{d-1}$. 
On the other hand, if $\sV(f)$ has a real singular point, then every polynomial 
that interlaces $f$ must pass through this point. This has two interesting 
consequences for the hyperbolic polynomials coming from linear programming 
and semidefinite programming. 

\begin{Example}\label{ex:LP}
Consider $f = \prod_{i=1}^nx_i$. The singular locus of $\sV(f)$ consists of the set of vectors 
with two or more zero coordinates. The subspace of polynomials in $\R[x]_{n-1}$ 
vanishing in these points is spanned by the $n$ polynomials 
$\{\prod_{j\neq i}x_j\::\; i=1,\hdots, n\}$. Note that this is exactly the 
linear space spanned by the partial derivatives of $f$. Theorem~\ref{thm:niceCones}
then shows that the cone of interlacers is isomorphic to $\overline{C(f,e)} = (\R_{\geq 0})^{n}$:
\[{\rm Int}\left( \prod x_i, \mathbf{1}\right) 
\;\;=\;\; \biggl\{\; \sum_{i=1}^n a_i \prod_{j\neq i}x_j \;:\; a\in (\R_{\geq 0})^{n} \;\biggl\} 
\;\; \isom \;\; (\R_{\geq 0})^{n}. \]
\end{Example}

Interestingly, this also happens when we replace the positive orthant by the cone of positive definite symmetric matrices. 

\begin{Example}\label{ex:SDP}
Let $f = \det(X)$ where $X$ is a 
$d\times d$ symmetric matrix of variables.  The singular locus of $\sV(f)$ is the locus of 
matrices with rank $\leq~d-2$. The corresponding ideal 
is generated by the $(d-1)\times(d-1)$ minors of $X$. 
Since these have degree $d-1$, we see that the polynomials 
interlacing $\det(X)$ must lie in the linear span of the
$(d-1)\times(d-1)$ minors of $X$. 
Again, this is exactly the linear span of the directional derivatives $D_E(f) = \trace(E\cdot X^{\rm adj})$. 
Thus Theorem~\ref{thm:niceCones} identifies ${\rm Int}(f,e)$ with the cone of positive semidefinite 
matrices:
\[{\rm Int}\left( \det(X), I\right) 
\;\;=\;\; \biggl\{\; \trace(A\cdot X^{\rm adj})\;:\; A\in \R^{d\times d}_{\succeq 0} \;\biggl\} 
\;\; \isom \;\; \R^{d\times d}_{\succeq 0}. \]
\end{Example}

 If $\sV_{\R}(f)$ is nonsingular, then the cone ${\rm Int}(f,e)$ is full dimensional and its algebraic boundary is a 
 hypersurface in $\R[\varx]_{d-1}$.  We see that any polynomial $g$ on the boundary 
 of ${\rm Int}(f,e)$ must have a non-transverse intersection point with $f$. As we see in the next theorem, 
 this algebraic condition exactly characterizes the algebraic boundary of ${\rm Int}(f,e)$.

 \begin{Thm}
   Let $f \in \R[\varx]_d$ be hyperbolic with respect to $e\in \R^n$ and
   assume that the projective variety $\sV(f)$ is smooth. Then the
   algebraic boundary of the convex cone ${\rm Int}(f,e)$ is the
   irreducible hypersurface in $\C[x]_{d-1}$ given by
\begin{equation}\label{eq:algBound}
  \left\{g\in \C[\varx]_{d-1} \;:\; \exists \; p\in\P^{n-1} \text{ such that } f(p)=g(p)=0 
 \text{ and }\rank\begin{pmatrix} \nabla f (p) \\ \nabla g (p) \end{pmatrix}  \leq 1\right\}.
 \end{equation}
 \end{Thm}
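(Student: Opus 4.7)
The plan is to prove two things: (a) the variety $B$ on the right-hand side of \eqref{eq:algBound} is an irreducible hypersurface in $\C[\varx]_{d-1}$, and (b) every $g \in \partial{\rm Int}(f,e)$ lies in $B$. Since ${\rm Int}(f,e)$ is a full-dimensional closed convex cone by Corollary~\ref{cor:InterlacersNon} (with $D_ef$ as an interior point), its Euclidean boundary has real codimension one; combining (a) and (b), the Zariski closure of $\partial{\rm Int}(f,e)$ is then a hypersurface contained in the irreducible variety $B$, and hence equals $B$.

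For (a), I would work with the incidence variety
\[
I \;=\; \bigl\{(p,[g]) \in \sV(f) \times \P(\C[\varx]_{d-1}) \;:\; g(p)=0,\ \nabla g(p) \in \C\cdot\nabla f(p)\bigr\}.
\]
Smoothness of $\sV(f)$ implies $\nabla f(p)\neq 0$ on $\sV(f)$, and for $n\geq 3$ also forces $\sV(f)$ to be irreducible (two hypersurfaces in $\P^{n-1}$ always meet by B\'ezout, so the smooth components cannot be disjoint; the case $n\leq 2$ is immediate). By the Euler identity applied at $p$, both $\nabla f(p)$ and $\nabla g(p)$ lie in the hyperplane $p^\perp$ once $f(p)=g(p)=0$, so the condition $\nabla g(p) \in \C\cdot\nabla f(p)$ amounts to only $n-2$ further linear constraints on $g$ beyond $g(p)=0$. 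Hence $I \to \sV(f)$ is a projective bundle with fibers of codimension $n-1$ in $\P(\C[\varx]_{d-1})$, and $I$ is an irreducible variety of dimension $N-2$, where $N := \dim\C[\varx]_{d-1}$. Since a generic tangent $[g]$ meets $\sV(f)$ tangentially at only finitely many points (the expected dimension of the tangency locus in $\P^{n-1}$ is $-1$), the projection $I\to\P(\C[\varx]_{d-1})$ is generically finite, so $B$ is an irreducible hypersurface.

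For (b), the starting point is that $g \in {\rm Int}(f,e)$ iff $F := D_ef\cdot g - f\cdot D_eg$ is nonnegative on $\R^n$, by Corollary~\ref{cor:InterlacersNon}. The key identity $F = -f^2\cdot\partial_e(g/f)$, combined with the partial-fraction expansion $g|_L/f|_L = \sum_i r_i/(t-\alpha_i)$ along any line $L$ in direction $e$ --- whose residues $r_i$ are nonnegative whenever $g$ interlaces $f$ --- shows $F > 0$ on $L\setminus\sV(f)$ whenever $g|_L\not\equiv 0$. Thus the real zeros of $F$ lie on $\sV_\R(f)$ up to a subvariety of codimension at least two in $\R^n$. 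For $g$ on the boundary, $F$ must vanish at some real $p$ outside the forced locus $\sV(f)\cap\sV(D_ef)$ (where every $L(h)$ vanishes automatically), or else $g$ would lie in the interior. By the above, such $p$ lies on $\sV_\R(f)\setminus\sV(D_ef)$, and then $F(p) = D_ef(p)\,g(p) = 0$ forces $g(p)=0$. Because $F\geq 0$ attains a local minimum at $p$, $\nabla F(p)=0$, and using $f(p)=g(p)=0$ this reduces to $D_ef(p)\,\nabla g(p) = D_eg(p)\,\nabla f(p)$; dividing by $D_ef(p)\neq 0$ yields $\nabla g(p)\parallel\nabla f(p)$, so $g\in B$.

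The main obstacle is pinning down where the non-forced real zero of $F$ can lie; the partial-fraction identity, which encodes the sign pattern of interlacers, is what confines it to $\sV_\R(f)$ modulo a lower-dimensional exceptional set. A companion subtlety is the codimension count for $I$: Euler's identity, by effectively reducing the tangency conditions from $n$ to $n-1$, is exactly what makes $B$ a hypersurface rather than a subvariety of higher codimension.
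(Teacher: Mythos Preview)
Your proof follows the same two–step architecture as the paper's (irreducibility of $B$ via an incidence correspondence over $\sV(f)$; containment of $\partial{\rm Int}(f,e)$ in $B$ via the vanishing of $\nabla F$ at a real zero), and it is correct in outline. The differences are in the detours you take.

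In (a), the paper proves only that $B$ is irreducible (constant linear fibers over the irreducible base $\sV(f)$) and then gets ``hypersurface'' for free: since ${\rm Int}(f,e)$ is full-dimensional, its algebraic boundary is already a hypersurface, and an irreducible variety containing a hypersurface is that hypersurface. Your Euler-identity dimension count is a nice way to see this directly, but the generic-finiteness of $I\to\P(\C[x]_{d-1})$ is asserted only heuristically, and in any case this whole step is redundant given your own opening paragraph. Also, full-dimensionality of ${\rm Int}(f,e)$ does not follow from Corollary~\ref{cor:InterlacersNon} alone; one needs $D_ef\neq 0$ on $\sV_\R(f)$, which the paper extracts from smoothness via \cite[Lemma~2.4]{PV}.

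In (b), the paper avoids your partial-fraction detour entirely by using condition (3) of Theorem~\ref{thm:Interlacers} rather than (4): since $D_ef\cdot g\geq 0$ on the compact set $\sV_\R(f)\subset\P^{n-1}(\R)$, strict positivity is an open condition in $g$, so a boundary $g$ must satisfy $D_ef(p)\,g(p)=0$ at some $p\in\sV_\R(f)$; smoothness then gives $g(p)=0$ directly. Your route via $F\geq 0$ on $\R^n$ works too, but the claim ``$F>0$ on $L\setminus\sV(f)$'' as written presupposes simple poles in the expansion $g|_L/f|_L=\sum r_i/(t-\alpha_i)$; on lines where $f|_L$ has repeated roots you must first cancel the common factor forced by interlacing (Lemma~\ref{lem:commonFactors}) before the residue argument applies. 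Once that is done, the exceptional set is just $\{a:g|_{L_a}\equiv 0\}$, and the rest of your argument goes through.
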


\begin{proof}
First, we show that the set \eqref{eq:algBound} is irreducible. Consider the incidence variety $X$
of polynomials $g$ and points $p$ satisfying this condition,
\[X \;=\; \left\{(g,p) \in \P(\C[\varx]_{d-1})\times\P^{n-1}
  \;:\;f(p)=g(p)=0 \text{ and }\rank\begin{pmatrix} \nabla f (p) \\
    \nabla g (p) \end{pmatrix} \leq 1\right\}.\] The projection
$\pi_2$ onto the second factor is $\sV(f)$ in $\P^{n-1}$.  Note that
the fibres of $\pi_2$ are linear spaces in $\P(\C[x]_{d-1})$ of constant
dimension. In particular, all fibres of $\pi_2$ are irreducible of the
same dimension. Since $X$ and $\sV(f)$ are projective and the latter
is irreducible, this implies that $X$ is irreducible (see \cite[\S
I.6, Thm.~8]{Sha}), so its projection $\pi_1(X)$ onto the first
factor, which is our desired set \eqref{eq:algBound}, is also
irreducible.

If $\sV(f)$ is smooth, then by \cite[Lemma~2.4]{PV}, $f$ and $D_ef$ share no real roots. 
This shows that the set of polynomials $g\in \R[x]_{d-1}$ for which $D_ef\cdot g$ is 
strictly positive on  $\sV_{\R}(f)$ is nonempty, as it contains $D_ef$ itself. This set is open 
and contained in ${\rm Int}(f,e)$, so ${\rm Int}(f,e)$ is full dimensional 
in $\R[\varx]_{d-1}$. Thus its algebraic boundary $\ol{\partial{\rm Int}(f,e)}$ 
is a hypersurface in $\C[x]_{d-1}$. To finish the proof, we just need to show that 
this hypersurface is contained in \eqref{eq:algBound}, since the latter is irreducible. 

To see this, suppose that $g\in \R[\varx]_{d-1}$ lies in the boundary
of ${\rm Int}(f,e)$.
By Theorem~\ref{thm:Interlacers}, there is some point $p\in \sV_{\R}(f)$ at which 
$g\cdot D_ef$ is zero. As $f$ is nonsingular, $D_ef(p)$ cannot be zero, again using 
\cite[Lemma~2.4]{PV}. Thus $g(p)=0$. Moreover, the polynomial 
$g\cdot D_ef - f\cdot D_eg$ is globally nonnegative, so its gradient also 
vanishes at the point $p$. As $f(p)=g(p)=0$, this means that 
$D_ef(p)\cdot \nabla g (p) = D_eg(p)\cdot \nabla f(p)$. Thus the pair 
$(g,p)$ belongs to $X$ above. 
\end{proof}

$\;$

When $\sV(f)$ has real singularities, computing the dimension of ${\rm Int}(f,e)$ becomes 
more subtle. In particular, it depends on the type of singularity.
\begin{Example}
Consider the two hyperbolic quartic polynomials 
\[f_1 = 3 y^4 + x^4 + 5 x^3 z + 6 x^2 z^2 - 6 y^2 z^2 \;\;\text{  and  } \;\;
f_2 = (x^2 + y^2 + 2 x z)(x^2 + y^2 + 3 x z),\] whose real varieties are shown in 
Figure~\ref{fig:singularQuartics} in the plane $\{z=1\}$. Both are hyperbolic with respect to the
point $e=[-1:0:1]$ and singular at $[0:0:1]$. Every polynomial interlacing either of these
must pass through the point $[0:0:1]$. However, for a polynomial $g$ to interlace $f_2$, its partial derivative
 $\partial g/\partial y$ must also vanish at $[0:0:1]$. Thus ${\rm Int}(f_1,e)$ has codimension one in 
 $\R[x,y,z]_3$ whereas ${\rm Int}(f_2,e)$ has codimension two. 
   \begin{figure}[h] 
   \includegraphics[height=3.6cm]{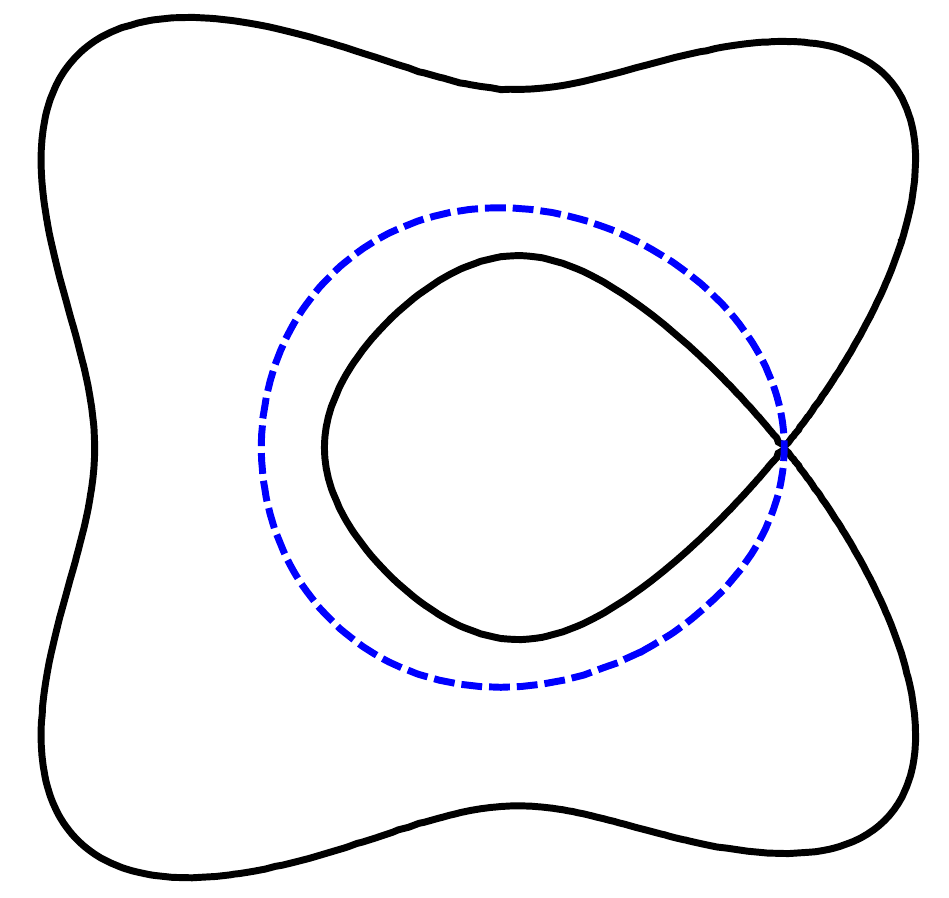} \hspace{5em}
 \includegraphics[height=3.6cm]{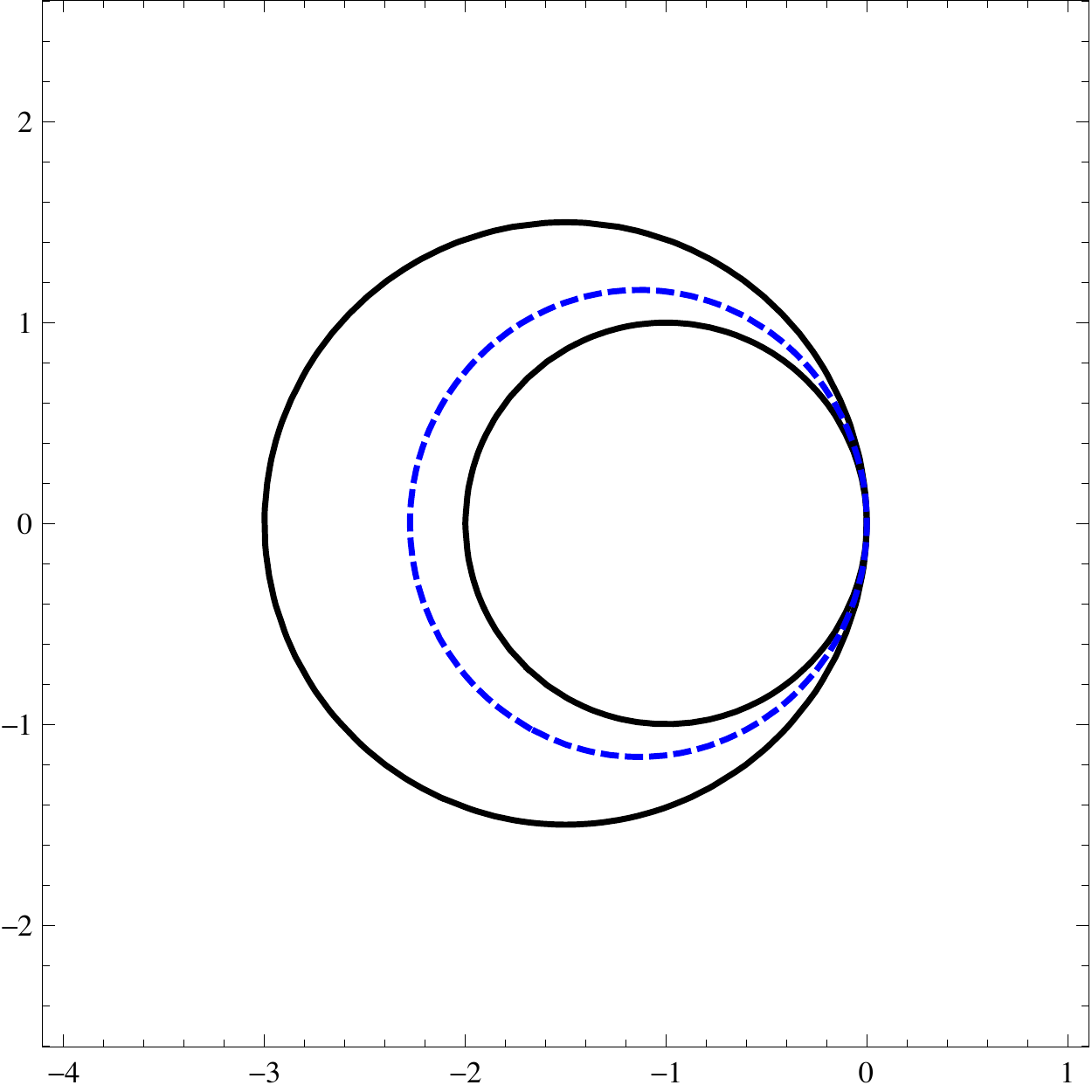}
\caption{Two singular hyperbolic quartics with different dimensions of interlacers. }
\label{fig:singularQuartics}
\end{figure}
 \end{Example}

Theorem~\ref{thm:niceCones} states that $\ol{C(f,e)}$ is a linear slice of the 
cone ${\rm Int}(f,e)$. By taking boundaries of these cones, we recover $\sV(f)$ 
as a linear slice of the algebraic boundary of ${\rm Int}(f,e)$. 

\begin{Def}
  We say that a polynomial $f\in\R[\varx]$ is \emph{cylindrical} if there
  exists an invertible linear change of coordinates $T$ on $\R^{n}$ such that
  $f(Tx)\in\R[x_1,\dots,x_{n-1}]$. 
\end{Def}

\begin{Cor} For non-cylindrical $f$, the map
  $\R^{n}\to\R[x]_{d-1}$ given by $a
  \mapsto D_a f$ is injective and maps
the boundary of $\ol{C(f,e)}$ into the boundary of ${\rm
  Int}(f,e)$. 
If $f$ is irreducible, this map identifies $\mathcal{V}(f)$ with a component of the Zariski closure of the 
boundary of ${\rm Int}(f,e)$ in the plane spanned by $\partial f/\partial x_1, \hdots, \partial f/\partial x_n$. 
\end{Cor}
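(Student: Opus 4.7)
The plan has three parts, matching the three assertions.

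\textbf{Injectivity.} First I would show $a \mapsto D_a f$ is injective. If $D_a f = 0$ for some nonzero $a \in \R^n$, pick an invertible linear map $T\colon \R^n \to \R^n$ with $T(e_n) = a$. By the chain rule,
\[
 \frac{\partial (f \circ T)}{\partial x_n}(y) \;=\; \nabla f(Ty) \cdot T(e_n) \;=\; (D_a f)(Ty) \;=\; 0
\]
for all $y$, so $f \circ T$ is independent of $x_n$, contradicting non-cylindricity.

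\textbf{Boundary to boundary.} For $a \in \partial\ol{C(f,e)}$, Theorem~\ref{thm:niceCones} gives $D_a f \in {\rm Int}(f,e)$. Next I would choose a sequence $a_k \to a$ with $a_k \notin \ol{C(f,e)}$; the same theorem yields $D_{a_k} f \notin {\rm Int}(f,e)$, while by continuity $D_{a_k} f \to D_a f$. Hence $D_a f$ cannot lie in the interior of ${\rm Int}(f,e)$, so $D_a f \in \partial\,{\rm Int}(f,e)$.

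\textbf{Identification with $\sV(f)$.} Let $P = \mathrm{span}\{\partial f/\partial x_1, \dots, \partial f/\partial x_n\} \subset \R[\varx]_{d-1}$. Injectivity makes $\phi(a) := D_a f$ a linear isomorphism $\R^n \to P$. By Theorem~\ref{thm:niceCones}, $\phi(\ol{C(f,e)}) = P \cap {\rm Int}(f,e)$, so the relative boundary of $P \cap {\rm Int}(f,e)$ in $P$ equals $\phi(\partial\ol{C(f,e)})$. Now $\partial\ol{C(f,e)}$ is a full $(n-1)$-dimensional semialgebraic subset of the real hypersurface $\sV_\R(f)$; when $f$ is irreducible the only $(n-1)$-dimensional subvariety containing it is $\sV(f)$, which is therefore its Zariski closure. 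Applying the linear isomorphism $\phi$, the Zariski closure of $\phi(\partial\ol{C(f,e)})$ is $\phi(\sV(f))$, an irreducible variety of dimension $n-1$ sitting inside $P$. Since $e$ lies in the interior of $\ol{C(f,e)}$, $\phi(e)=D_e f$ lies in the $P$-relative interior of $P \cap {\rm Int}(f,e)$, so its $P$-boundary is a proper Zariski-closed subset of $P$ of dimension at most $n-1 = \dim P - 1$. Being an irreducible subvariety of maximal possible dimension in that closure, $\phi(\sV(f))$ must be one of its components.

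\textbf{Main obstacle.} The subtle step is identifying the Zariski closure of $\partial\ol{C(f,e)}$ with $\sV(f)$ itself. This rests on the fact that the innermost real ovaloid is genuinely $(n-1)$-dimensional and thus cannot be contained in any proper subvariety of the irreducible $\sV(f)$. If one wishes to work over $\C$ and $f$ is irreducible only over $\R$, the argument should be adjusted to treat each conjugate $\C$-component of $\sV(f)$ on equal footing and yields the claim for the union, which is the $\R$-irreducible variety $\sV(f)$.
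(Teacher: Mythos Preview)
Your proposal is correct and follows essentially the same approach as the paper's proof, which is quite terse: the paper simply notes that non-cylindricity gives linear independence of the partials (hence injectivity), that ``taking the boundaries of the cones in \eqref{eq:slicedInt}'' gives the boundary-to-boundary statement, and that irreducibility of $f$ makes $\sV(f)$ the Zariski closure of $\partial\ol{C(f,e)}$. Your argument is a faithful expansion of this, with the sequence argument in the second part making explicit what ``taking boundaries'' means; note also that since $\phi$ is a linear isomorphism onto $P$ and $\phi(\ol{C(f,e)})=P\cap{\rm Int}(f,e)$, the Zariski closure of the $P$-relative boundary is exactly $\phi(\sV(f))$, so your separate dimension bound is not strictly needed---$\phi(\sV(f))$ is in fact the unique component.
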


\begin{proof} Since $f$ is not cylindrical, the $n$ partial
  derivatives $\partial f/\partial x_j$ are linearly independent, so
  that $a\mapsto D_af$ is injective. The claim now follows from taking
  the boundaries of the cones in \eqref{eq:slicedInt}. If $f$ is
  irreducible, then the Zariski closure of the boundary of
  $\ol{C(f,e)}$ is $\mathcal{V}(f)$.
\end{proof}

\begin{Example}  We take the cubic form $f(x,y,z)= (x - y) (x + y) (x + 2 y) - x z^2$,
which is hyperbolic with respect to the point $[1:0:0]$. 
Using the computer algebra system {\tt Macaulay~2}, 
we can calculate the minimal polynomial in  
$\Q[c_{11}, c_{12}, c_{13}, c_{22}, c_{23}, c_{33}]$ 
that vanishes on the boundary of the cone ${\rm Int}(f,e)$. 
Note that any conic
\[q =  c_{11}x^2+c_{12}xy+c_{13}xz+c_{22}y^2+c_{23}yz+c_{33}z^2\]
on the boundary of ${\rm Int}(f,e)$ must have a singular intersection point with 
$\sV(f)$. 
Saturating with the ideal $(x,y,z)$ and eliminating the variables $x,y,z$ from the ideal 
\[(f,q )\; +\;  {\rm minors}_2({\rm Jacobian}(f, q))\]
gives an irreducible polynomial of degree twelve in the six coefficients of $q$.
This hypersurface is the algebraic boundary of ${\rm Int}(f,e)$. 
When we restrict to the three-dimensional subspace given by 
$q = a \frac{\partial f}{\partial x}+b \frac{\partial f}{\partial y}+c \frac{\partial f}{\partial z}$, 
this polynomial of degree twelve factors as
\begin{align*}
a &\cdot f(a,b,c)\cdot (961 a^8 + 5952 a^7 b + 
   11076 a^6 b^2 - 3416 a^5 b^3 - 34770 a^4 b^4 - 31344 a^3 b^5 \\
   & + 14884 a^2 b^6 + 34632 a b^7 + 13689 b^8 - 1896 a^6 c^2 - 
   4440 a^5 b c^2 + 6984 a^4 b^2 c^2 + 25728 a^3 b^3 c^2 \\ &+ 
   15960 a^2 b^4 c^2 - 7560 a b^5 c^2 - 7560 b^6 c^2 + 1074 a^4 c^4 - 
   1680 a^3 b c^4 - 7116 a^2 b^2 c^4 - 2376 a b^3 c^4 \\ &+ 
   2106 b^4 c^4  + 16 a^2 c^6+ 936 a b c^6 - 27 c^8).
   \end{align*}
   One might hope that ${\rm Int}(f,e)$ is also a hyperbolicity cone
   of some hyperbolic polynomial, but we see that this is not the
   case.  Restricting to $c=0$ shows that the polynomial above, unlike $f$,
   is not hyperbolic with respect to $[1:0:0]$.
\end{Example}

\def\cprime{$'$}

\end{document}